

\documentclass[english]{smfart}
\usepackage{epsfig,   enumerate, psfrag, amsmath,amssymb}
\usepackage[utf8]{inputenc}

\let\mathbb\mathbf

\date{\today}

\keywords{}
\author{Romain Dujardin}
\thanks{Research  partially supported by ANR project LAMBDA,  ANR-13-BS01-0002 and  a grant from the  Institut Universitaire de France}
\makeindex
\makeglossary
 
\title[Arithmetic problems in rational dynamics]{Some problems of arithmetic origin in rational dynamics}

\address{Sorbonne Université, CNRS, Laboratoire de Probabilités, Statistiques et Modélisations (LPSM, UMR 8001), F-75005 Paris, France}
\email{romain.dujardin@upmc.fr}

\subjclass{37P05; 37P30; 37F10; 37F45}


\newcommand{\cc}{\mathbb{C}}

\newcommand{\re}{\mathbb{R}}
\newcommand{\dd}{\mathbb{D}}
\newcommand{\zz}{\mathbb{Z}}
\newcommand{\nn}{\mathbb{N}}
\newcommand{\pp}{\mathbb{P}}
\newcommand{\Q}{\mathbb{Q}}
\renewcommand{\aa}{\mathbb{A}}

\newcommand{\e}{\varepsilon}

\newcommand{\fr}{\partial}
\newcommand{\om}{\Omega}
\newcommand{\set}[1]{\left\{#1\right\}}
\newcommand{\norm}[1]{\left\Vert#1\right\Vert}
\newcommand{\abs}[1]{\left\vert#1\right\vert}
\newcommand{\cd}{{\cc^2}}

\newcommand{\pu}{{\mathbb{P}^1}}

\newcommand{\rest}[1]{ \arrowvert_{#1}}

\newcommand{\unsur}[1]{\frac{1}{#1}}

\newcommand{\cst}{{C}^{st}}
\newcommand{\lrpar}[1]{\left(#1\right)}

\newcommand{\la}{\lambda}
\newcommand{\lo}{{\lambda_0}}

\newcommand{\La}{\Lambda}

\newcommand{\poly}{\mathcal{P}}
\newcommand{\mpoly}{\mathcal{MP}}
\newcommand{\pcf}{post-critically finite }
\newcommand{\cm}{{\mathrm{cm}}}

\newcommand{\inv}{^{-1}}

\DeclareMathOperator{\supp}{Supp}

\DeclareMathOperator{\bif}{Bif}
\DeclareMathOperator{\stab}{Stab}

\DeclareMathOperator{\percrit}{PerCrit}
\DeclareMathOperator{\per}{Per}
 \DeclareMathOperator{\preper}{Preper}

\newtheorem{prop}{Proposition} [section]
\newtheorem{thm}[prop] {Theorem}

\newtheorem{cor}[prop]{Corollary}

 \newtheorem*{thm*}{Theorem}
\newtheorem*{cor*}{Corollary}

\theoremstyle{definition}
\newtheorem{defi}[prop] {Definition}

\theoremstyle{remark}

\makeindex
\makeglossary

\setlength{\parskip}{.3em}
 
\begin{document}
 \maketitle
\begin{abstract}
 These are lecture notes from a course in arithmetic dynamics given in Grenoble in June 2017. 
  The main purpose of this text is to 
   explain how arithmetic equidistribution theory can be used in the dynamics of rational maps on $\mathbf P^1$. 
   We first briefly introduce   the  
 basics of the  iteration theory of rational maps  on the projective line over $\cc$, as well as some elements of iteration theory over an arbitrary complete valued field and  the construction of dynamically 
 defined height functions for rational functions defined over $\overline\Q$. The equidistribution of small points gives 
some original information on the distribution of preperiodic orbits, leading to some non-trivial rigidity statements. We then 
explain some  consequences of arithmetic equidistribution 
to the study of the geometry of parameter spaces 
  of such dynamical systems, notably pertaining to 
  the  distribution of special parameters and the classification of special subvarieties. 
   \end{abstract}

 \tableofcontents
 
 \section*{Introduction}  In the recent years a number of classical  ideas and problems in arithmetics
  have been transposed to the setting of rational 
 dynamics in one and several variables.  A main source of motivation in these developments is the analogy between 
 torsion points on an Abelian variety and (pre-)periodic points of rational maps. This is actually more than an analogy since torsion points on an Abelian variety $A$ are precisely the preperiodic points of the endomorphism of $A$ induced 
  by    multiplication by 2. Thus, problems about the distribution or structure of torsion points can be 
  translated to dynamical problems. This analogy also applies to spaces of such objects:  in this way 
  elliptic curves with complex multiplication would correspond to post-critically finite rational maps. Again one may ask whether 
  results about the distribution of these ``special points'' do reflect each other. This point of view was in particular put forward 
 by J. Silverman (see \cite{silverman, silverman_notes} for a detailed presentation and references).   
  
Our goal is to present a few recent results belonging to  this line  of research. 
More precisely we will concentrate  on  some results in which potential theory and arithmetic equidistribution, as 
presented in this volume by P. Autissier and A. Chambert-Loir (see \cite{autissier, acl}), play a key role. This includes:
\begin{itemize}
\item an arithmetic proof of the equidistribution of periodic orbits towards the equilibrium 
measure as well as some consequences  (Section~\ref{sec:dynamics_equidist}); 
\item the equidistribution of post-critically finite mappings  in the parameter space of degree 2 (Section~\ref{sec:quadratic}) and degree $d\geq 3$ polynomials (Section~\ref{sec:equidist higher});
\item  the classification of special curves in the space of cubic polynomials (Section~\ref{sec:special}). 
\end{itemize}
A large part of these results is based on the work of Baker-DeMarco \cite{baker_demarco1, baker_demarco}
and Favre-Gauthier \cite{favre_gauthier, favre_gauthier2}.

These notes are based on a series of lectures given by the author in a summer school in Grenoble in June 2017, which
were intended for an audience  with minimal knowledge in complex analysis and  dynamical systems. 
 The  style is deliberately informal and favors reading flow against precision, in order 
 to arrive rather quickly at some recent advanced topics. 
 In particular the proofs are mostly sketched, with an emphasis on the  dynamical parts of the  arguments.
The material in Part I is standard and covered with much greater detail in  classical textbooks:
see e.g. Milnor \cite{milnor} or Carleson-Gamelin \cite{carleson_gamelin} for holomorphic dynamics, and Silverman \cite{silverman} 
for the arithmetic side. Silverman's lecture notes \cite{silverman_notes} and DeMarco's 2018 ICM address \cite{demarco_icm}
contain similar but more advanced material.

 \part{Basic holomorphic and arithmetic  dynamics on $\pu$}  
 \section{A few useful geometric tools}

 \Subsection*{Uniformization}
 
 \begin{thm*}[Uniformization Theorem]  
 Every simply connected Riemann surface is biholomorphic to 
 the open unit disk $\dd$, the complex plane $\cc$ or the Riemann sphere $\pu(\cc)$. 
 \end{thm*}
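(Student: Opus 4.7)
The plan is to split along the natural dichotomy compact/non-compact, and then within the non-compact case distinguish two regimes by means of potential theory, producing in each of the three situations an explicit biholomorphism onto the corresponding model surface $\pu(\cc)$, $\cc$ or $\dd$.

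If $X$ is compact and simply connected, I would aim to produce a meromorphic function $f\colon X\to\pu(\cc)$ of degree one, which by construction is a biholomorphism. The existence of such an $f$ (equivalently: a meromorphic function with a single simple pole and no other singularities) can be obtained from Riemann--Roch applied to the divisor $p_0$, once one knows that the genus vanishes, a fact which follows from simple connectivity. Alternatively one can construct $f$ directly by solving a suitable $\bar\partial$-problem or by producing a harmonic function with a prescribed $\re(1/z)$-type singularity at $p_0$ via the Perron method on $X$ and then taking a harmonic conjugate.

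For non-compact $X$, fix $p_0\in X$ and attempt to construct a Green's function $g$ with logarithmic pole at $p_0$, as the Perron envelope of the family of subharmonic functions on $X\setminus\{p_0\}$ that are bounded above by $-\log\abs{z}$ near $p_0$ and by $0$ globally. Call $X$ hyperbolic if this envelope is a finite harmonic function, parabolic otherwise. In the hyperbolic case, simple connectivity provides a global harmonic conjugate $g^{*}$, and $\varphi=\exp(-(g+ig^{*}))\colon X\to\dd$ sends $p_0\mapsto 0$. The fact that $\varphi$ is a biholomorphism is extracted from the extremal characterization of the Green's function: any other holomorphic $X\to\dd$ vanishing at $p_0$ produces, through $-\log\abs{\cdot}$, a competitor dominated by $g$, which is enough to force both injectivity and surjectivity of $\varphi$. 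In the parabolic case, exhaust $X$ by relatively compact simply connected domains $\Omega_n\ni p_0$, solve on each $\Omega_n\setminus\{p_0\}$ the Dirichlet problem with logarithmic pole and boundary value $0$, and pass to the limit to obtain a harmonic function $u$ on $X\setminus\{p_0\}$ which is proper to $+\infty$ at $p_0$ and unbounded above on $X$. Its harmonic conjugate $u^{*}$, again well-defined by simple connectivity, yields $\psi=\exp(-(u+iu^{*}))\colon X\to\cc$, and the same kind of extremal/maximum-principle comparison argument shows that $\psi$ is a biholomorphism.

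The main obstacle is genuinely analytic rather than formal: the whole argument hinges on the construction and regularity of the relevant harmonic functions on an abstract Riemann surface, which requires the Perron method, Harnack's principle, and solvability of the local Dirichlet problem on parametric disks. Equally delicate is the step where one upgrades the holomorphic maps $\varphi$ and $\psi$ from being merely non-constant to being bijections; this is the place where simple connectivity (ruling out monodromy of the harmonic conjugate) and the extremal nature of the Green's/exhaustion function both play an essential role, and it is where a careful write-up would spend most of its effort.
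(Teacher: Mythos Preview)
The paper does not actually prove the Uniformization Theorem; it merely states it and refers the reader to \cite{saint_gervais} for a detailed treatment. There is therefore no argument in the paper to compare yours against.

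Your outline follows the classical potential-theoretic route: the compact case via Riemann--Roch (or a direct construction of a meromorphic function with a single simple pole), and the non-compact case via the Green's-function dichotomy, building the map to $\dd$ or $\cc$ as $\exp(-(g+ig^*))$ from a harmonic function and its conjugate. This is the standard textbook strategy and is correct in its broad architecture. As you yourself acknowledge, the genuine content lies in the Perron-method constructions on an abstract Riemann surface and in the step upgrading $\varphi$ and $\psi$ from non-constant holomorphic maps to biholomorphisms; your sketch names these difficulties honestly but does not resolve them, so what you have written is a plan rather than a proof. Given that the paper itself declines to prove the theorem and simply cites the literature, your level of detail already exceeds what the paper offers.
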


 See \cite{saint_gervais} for a beautiful and  thorough treatment of this result and of its historical context.
A Riemann surface $S$ is called {\em hyperbolic}\index{Hyperbolic>Riemann surface} (resp. {\em parabolic})\index{Parabolic>Riemann surface} if its universal cover is the unit disk (resp. the 
complex plane). Note that in this terminology, an elliptic curve $E\simeq \cc/\Lambda$ is parabolic. 
In a sense, ``generic'' Riemann surfaces are hyperbolic, however, interesting complex dynamics occurs only on 
parabolic Riemann surfaces or on  $\pu(\cc)$. 

\begin{thm*}  If $a$, $b$, $c$ are distinct points on $\pu(\cc)$, then $\pu(\cc)\setminus \set{a,b,c}$ is hyperbolic.
\end{thm*}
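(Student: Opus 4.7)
The plan is to invoke the Uniformization Theorem and rule out the two non-hyperbolic possibilities for the universal cover of $S := \pu(\cc) \setminus \set{a,b,c}$.

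First, I would eliminate the spherical case. Since $S$ is non-compact (being an open subset of $\pu(\cc)$ properly contained in it), it cannot be covered by $\pu(\cc)$: any holomorphic covering $\pu(\cc) \to S$ would be a homeomorphism (as $\pu(\cc)$ is simply connected), forcing $S$ to be compact, a contradiction.

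Next, I would eliminate the parabolic case. If $\cc$ were the universal cover, then $S \simeq \cc/\Gamma$ for some subgroup $\Gamma$ of $\mathrm{Aut}(\cc)$ acting freely, properly discontinuously. The automorphism group of $\cc$ consists of affine maps $z \mapsto \alpha z + \beta$; for such a map to act without fixed points one needs $\alpha = 1$, so $\Gamma$ consists entirely of translations. A discrete subgroup of translations has rank $0$, $1$, or $2$, so the quotient is biholomorphic to $\cc$, to $\cc^*$, or to a complex torus. None of these is homeomorphic to a thrice-punctured sphere: for instance, their fundamental groups are $0$, $\zz$, or $\zz^2$, whereas $\pi_1(S)$ is free of rank $2$ (equivalently, $S$ has Euler characteristic $-1$). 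This contradiction rules out $\cc$.

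The main conceptual point is the classification of quotients of $\cc$ by its automorphisms, which is where the argument could go wrong if one forgot that torsion-free discrete subgroups of the affine group must be translation subgroups. Alternatively, one could phrase the second step via Picard's little theorem: a covering map $\cc \to S$ would be a non-constant entire function omitting three values, contradicting Picard. Since by the Uniformization Theorem the universal cover is one of $\dd$, $\cc$, $\pu(\cc)$, the remaining option $\dd$ holds, which means $S$ is hyperbolic by definition.
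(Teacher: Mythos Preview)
Your proof is correct and follows the same overall strategy as the paper: apply Uniformization, rule out $\pu(\cc)$ by non-compactness, and rule out $\cc$ by showing that the deck transformation group cannot sit inside $\mathrm{Aut}(\cc)$.

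The only difference is in how the parabolic case is excluded. You classify the possible freely-acting discrete subgroups of $\mathrm{Aut}(\cc)$ (they must be translation lattices of rank $0$, $1$, or $2$) and check that none of the resulting quotients has the right fundamental group. The paper instead gives a one-line algebraic argument: the commutator subgroup of the affine group $\mathrm{Aut}(\cc)$ is Abelian, so $\mathrm{Aut}(\cc)$ contains no free subgroup on two generators, whereas the deck group is free of rank~$2$. Your argument is a bit more concrete and yields the explicit list of parabolic Riemann surfaces as a byproduct; the paper's is shorter. Your alternative via Picard's little theorem is also valid, though one should be a little careful not to make the reasoning circular, since Picard is often proved using the hyperbolicity of the thrice-punctured sphere.
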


\begin{proof}
Since the Möbius group acts transitively on triples of points, we may assume that 
$\set{a,b,c} = \set{0, 1, \infty}$. Fix a base point $\star\in \cc\setminus\set{0, 1}$, then the fundamental group
$\pi_1(  \cc\setminus\set{0, 1}, \star)$ is free on two generators. 
Let
$S\to \cc\setminus\set{0, 1}$ be a universal cover. Since $\cc\setminus\set{0, 1}$ is non-compact, then $S$ is biholomorphic to
 $\dd$ or
$\cc$. The deck transformation group is a group of automorphisms of $S$ acting discretely and isomorphic 
to  the free group on two generators. The affine group $\mathrm{Aut}(\cc)$ has no free subgroups (since for instance 
its   commutator subgroup is Abelian) so necessarily $S\simeq\dd$. 
\end{proof}

\begin{cor*} If $\Omega$ is an open subset of $\pu(\cc)$ whose complement contains at least 3 points, then $\Omega$ is hyperbolic.\end{cor*}

Recall that a family $\mathcal{F}$
of meromorphic functions on some open set $\Omega$ is said {\em normal}\index{Normal family} if it is equicontinuous 
w.r.t. the spherical metric, or equivalently, if it is relatively compact in the compact-open topology. Concretely, if $(f_n)$ is
a sequence in a normal family $\mathcal{F}$, then
 there exists a subsequence converging locally uniformly on $\Omega$ to 
a meromorphic function. 
 
\begin{thm*}[Montel]
If $\Omega$ is an open subset of $\pu(\cc)$ and $\mathcal{F}$ is a family of meromorphic functions in $\Omega$  avoiding
3 values, then $\mathcal{F}$ is normal.
\end{thm*}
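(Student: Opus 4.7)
The plan is to reduce the statement to a local one and then exploit that the target $\pu(\cc)\setminus\set{a,b,c}$ is hyperbolic, as just established. Since normality is a local property, I may shrink $\Omega$ to a small disk $D$ (itself hyperbolic by the corollary above), and after a Möbius transformation of the target I may assume $\set{a,b,c}=\set{0,1,\infty}$. The Schwarz-Pick lemma applied to each $f\in\mathcal{F}$, viewed as a holomorphic map between the hyperbolic Riemann surfaces $D$ and $V:=\pu(\cc)\setminus\set{0,1,\infty}$, then yields the key uniform bound: on any compact $K\Subset D$, the Poincaré diameter of $f(K)$ in $V$ is bounded independently of $f\in\mathcal{F}$.

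Given a sequence $(f_n)\subset\mathcal{F}$, I fix $z_0\in D$ and extract a subsequence so that $f_n(z_0)$ converges in $\pu(\cc)$ to some $w_\infty$. I would then split into two cases. If $w_\infty\in V$, I pass to the universal cover $\pi\colon\dd\to V$ furnished by the previous theorem: I choose lifts $\tilde f_n\colon D\to\dd$ such that $\tilde f_n(z_0)$ converges to a preimage of $w_\infty$. The family $(\tilde f_n)$ is then uniformly bounded in $\dd$, hence normal by the classical Montel theorem for bounded families (Cauchy estimates plus Arzelà-Ascoli); any locally uniform limit $\tilde f_n\to\tilde f$ descends to $f_n=\pi\rond\tilde f_n\to\pi\rond\tilde f$.

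If instead $w_\infty\in\set{0,1,\infty}$, I claim that $f_n\to w_\infty$ locally uniformly. The Poincaré metric $\rho$ on $V$ blows up at each puncture; near $0$, for instance, it is asymptotic to $|dz|/(|z|\log(1/|z|))$, so a $\rho$-ball of fixed radius centered at a point tending to a puncture has spherical diameter tending to $0$. Combined with the uniform Schwarz-Pick bound on the $\rho$-diameter of $f_n(K)$, this forces $f_n(K)$ to shrink to $\set{w_\infty}$ in the spherical metric.

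The main obstacle in this plan is the second case: it rests on the asymptotic behavior of the Poincaré metric near the punctures of $V$, which is not immediate from uniformization alone. A cleaner workaround is to phrase everything in terms of the completeness of $V$ as a hyperbolic Riemann surface: a subset of $V$ of bounded hyperbolic diameter whose base point leaves every compact set of $V$ must accumulate on $\set{0,1,\infty}$ in $\pu(\cc)$, and this is enough to deduce spherical equicontinuity on $K$ and invoke Arzelà-Ascoli directly, bypassing explicit asymptotics.
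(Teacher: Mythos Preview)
Your proof is correct and follows the same strategy as the paper: reduce to a disk, lift through the universal cover $\dd\to\cc\setminus\{0,1\}$, and exploit compactness of $\dd$-valued holomorphic families. The paper's version is terser --- after lifting it asserts in one line that Cauchy estimates make the original sequence $(f_n)$ locally Lipschitz in the spherical metric and invokes Arzel\`a--Ascoli with no case split; that step implicitly uses exactly what you isolate as the obstacle (the hyperbolic metric on $\cc\setminus\{0,1\}$ dominates the spherical one, equivalently the covering map has bounded spherical derivative), so your more explicit handling via the two cases and completeness is a legitimate way to fill in what the paper leaves unsaid.
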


\begin{proof}
By post-composing with a Möbius transformation
 we may assume that $\mathcal{F}$ avoids $\set{0, 1, \infty}$. 
Pick a disk $D\subset \Omega$ and a sequence $(f_n)\in \mathcal{F}^\mathbb{N}$. 
It is enough to show that $(f_n\rest{D})$ is normal. A first possibility is that $(f_n\rest{D})$ diverges uniformly in
 $\cc\setminus \set{0,1}$, that is is converges to  $\set{0, 1, \infty}$, and in this case we
  are done. Otherwise there exists $t_0\in D$, 
 $z_0\in \Omega$ and a subsequence $(n_j)$ such that $f_{n_j}(t_0)\to z_0$. Then if $\psi: \dd\to 
  \cc\setminus\set{0, 1}$ is a universal cover such that $\psi(0)  = z_0$, lifting 
  $(f_{n_j})$ under $\psi$  yields a sequence  $\hat f_{n_j}: D\to \dd$ such that 
  $\hat f_{n_j}(t_0)\to 0$. 
The Cauchy estimates imply that 
$(\hat f_{n_j})$ is locally uniformly Lipschitz in $D$ so  by   the Ascoli-Arzela theorem, extracting further if necessary, 
 $(\hat f_{n_j})$ converges to some $\hat f: D\to \dd$. Finally, $f_{n_j}$ converges to $\psi\circ \hat f$ and we are done. 
\end{proof}

\subsection*{The hyperbolic metric}

First recall the {\bf Schwarz-Lemma}: {\em if $f:\dd\to\dd$ is a holomorphic map such that $f(0)=0$, then $\abs{f'(0)}\leq 1$, with equality if and only if $f$ is an automorphism, which must then  be a rotation.} 

More generally, any automorphism of $\dd$ is of the form $f(z) = e^{i\theta} \frac{z-\alpha}{1-\bar\alpha z}$, for some 
$\theta\in \re$ and 
$\abs{\alpha}<1$, and the inequality 
in the Schwarz Lemma can then be propagated as follows: let $\rho$ be the Riemannian metric on $\dd$ defined by the formula $\rho(z) = \frac{2\abs{dz}}{1-\abs{z}^2}$, i.e. if $v$ is a tangent vector at $z$, $v\in T_z\dd\simeq \cc$, 
then $\norm{v}_\rho  = \frac{2\abs{v}}{1-\abs{z}^2}$.  This   metric is referred to as the {\em hyperbolic (or Poincaré) metric}\index{Hyperbolic>metric}\index{Poincar\'e>metric}\index{Metric>hyperbolic=(Hyperbolic ---)}\index{Metric>poincare=(Poincar\'e ---)} and 
the Riemannian manifold $(\dd, \rho)$ is the {\em hyperbolic disk}\index{Hyperbolic>disk}. 

\begin{thm*}[Schwarz-Pick Lemma]\index{Schwarz--Pick>lemma}
Any holomorphic map $f:\dd\to \dd$ is a weak contraction for the hyperbolic metric. It is a strict contraction unless $f$ is an automorphism.
\end{thm*}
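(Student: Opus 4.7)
The plan is to reduce the statement to the Schwarz Lemma by exploiting the homogeneity of the hyperbolic metric under $\mathrm{Aut}(\dd)$.

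First I would verify that every automorphism $\phi \in \mathrm{Aut}(\dd)$ is a hyperbolic isometry. Using the explicit form $\phi(z)=e^{i\theta}\frac{z-\alpha}{1-\bar\alpha z}$, a direct computation yields
\[
\phi'(z) = e^{i\theta}\frac{1-\abs{\alpha}^2}{(1-\bar\alpha z)^2}, \qquad 1-\abs{\phi(z)}^2 = \frac{(1-\abs{\alpha}^2)(1-\abs{z}^2)}{\abs{1-\bar\alpha z}^2},
\]
so that $\frac{2\abs{\phi'(z)}}{1-\abs{\phi(z)}^2}=\frac{2}{1-\abs{z}^2}$, proving $\phi^*\rho=\rho$. (Alternatively, one can apply Schwarz to $\phi$ and $\phi\inv$ simultaneously at $0$ to see that automorphisms fixing the origin preserve $\rho$ there, then use transitivity.)

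Next, given $f:\dd\to\dd$ holomorphic and a point $z_0\in\dd$, I would pick automorphisms $\psi,\varphi\in \mathrm{Aut}(\dd)$ with $\psi(0)=z_0$ and $\varphi(f(z_0))=0$, and set $g=\varphi\circ f\circ\psi:\dd\to\dd$, so that $g(0)=0$. By the Schwarz Lemma, $\abs{g'(0)}\leq 1$. Since at the origin the hyperbolic norm is just $\norm{v}_\rho=2\abs{v}$, this translates to $\norm{dg_0 v}_\rho\leq \norm{v}_\rho$ for every tangent vector $v$. Because $\psi$ and $\varphi$ are hyperbolic isometries, the inequality for $g$ at $0$ is exactly the inequality for $f$ at $z_0$:
\[
\norm{df_{z_0}(w)}_\rho = \norm{dg_0\bigl(d\psi\inv_{z_0}w\bigr)}_\rho \leq \norm{d\psi\inv_{z_0}w}_\rho = \norm{w}_\rho.
\]

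For the equality case, suppose $\norm{df_{z_0}(w)}_\rho=\norm{w}_\rho$ for some nonzero $w$, or indeed that the hyperbolic derivative of $f$ has norm $1$ at some point. Then after the same normalization, $\abs{g'(0)}=1$, and the equality case of the Schwarz Lemma forces $g$ to be a rotation, hence an automorphism of $\dd$; consequently $f=\varphi\inv\circ g\circ\psi\inv\in\mathrm{Aut}(\dd)$. The main subtlety is only bookkeeping: making sure the chain rule applied through $\psi$ and $\varphi$ really transports the estimate from $0$ back to $z_0$, which is automatic once the isometry property of $\mathrm{Aut}(\dd)$ is established.
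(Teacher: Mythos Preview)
Your proof is correct and is precisely the standard argument. The paper does not actually supply a proof of the Schwarz--Pick Lemma: it merely states the Schwarz Lemma, records the explicit form of automorphisms of $\dd$, remarks that ``the inequality in the Schwarz Lemma can then be propagated'' via the hyperbolic metric, and then states the theorem. Your write-up carries out exactly this propagation, so there is nothing to compare.
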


If now $S$ is any hyperbolic Riemann surface,  since the deck transformation group of the universal cover 
$\dd\to  S$ acts by isometries, we can push $\rho$ to a well-defined  {\em hyperbolic metric} on $S$. As before any holomorphic
map $f:S\to S'$ 
between hyperbolic Riemann surfaces is a weak contraction, and it is a strict contraction unless   $f$ lifts to an isometry between their 
universal covers.  

 \section{Review of rational dynamics on  $\pu(\cc)$}
 
 Let $f:\pu(\cc)\to \pu(\cc)$ be a rational map of degree $d$, that is 
 $f$ can be written
  in homogeneous coordinates as $[P(z,w):Q(z,w)]$, where $P$ and $Q$ are homogeneous polynomials of degree $d$ 
 without common factors. Equivalently, $f(z) = \frac{P(z)}{Q(z)}$ in some affine chart. It is an elementary fact
  that  any holomorphic self-map on $\pu(\cc)$ is rational. In particular the group of automorphisms of $\pu(\cc)$  is
  the M\"obius group $\mathrm{PGL}(2, \cc)$.
   
 We consider $f$ as a dynamical system, that is we wish to understand the asymptotic behavior of the iterates 
 $f\circ \cdots \circ f =: f^n$. General references for the results of this section include \cite{milnor, carleson_gamelin}.
 Throughout these notes, we make the standing assumption that $d\geq 2$. 
 
 \subsection*{Fatou-Julia dichotomy}\index{Fatou--Julia>dichotomy}  
The {\em Fatou set}\index{Fatou set} $F(f)$\glossary{$F(f)$: Fatou set} is the  set of points $z\in \pu(\cc)$ such that there exists a neighborhood $N\ni z$ on which the sequence 
of iterates $(f^n\rest{N})$ is normal. It is open by definition.  A typical situation occurring in the Fatou set 
is that of an attracting orbit: that is an invariant  finite set $A$   such that every  nearby point
 converges under iteration to $A$. 
The locus of non-normality or  {\em Julia set}\index{Julia set} $J(f)$\glossary{$J(f)$: Julia set} is   the  complement of the Fatou set: 
$J(f) = F(f)^\complement$. When there is no danger of confusion we feel free to drop the dependence on $f$.
 
The Fatou and Julia sets are invariant ($f(X) \subset X$, where $X = F(f)$ or $J(f)$) and even totally invariant ($f\inv(X) = X$).  
 Since  $d\geq 2$  it   easily follows that $J(f)$ is nonempty. On the other hand the Fatou set may be empty.

 If $z\in J(f)$ and $N$ is a neighborhood of $z$, it follows from Montel's theorem that 
 $\bigcup_{n\geq 0} f^n(N)$ avoids at most 2 points. Define 
 $$E_z  = \bigcap_{N\ni z} \lrpar{ \pu(\cc) \setminus \bigcup_{n\geq 0} f^n(N)}.$$ 
 
\begin{prop}
The set  $E:= E_z$ is independent of $z\in J(f)$. Its cardinality is  at most 2 and it 
is the maximal totally invariant finite subset. 
  Furthermore:
 \begin{itemize}
 \item If $\#E=1$ then  $f$ is conjugate in $\mathrm{PGL}(2, \cc)$ to a  polynomial.
 \item If $\#E =2$ then   $f$ is conjugate in $\mathrm{PGL}(2, \cc)$  to  $z\mapsto z^{\pm d}$. 
 \end{itemize}
 \end{prop}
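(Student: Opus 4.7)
The plan is to bound $\#E_z$ by Montel, establish total invariance via a cardinality argument, then observe that $E_z$ consists of super-attracting periodic points (hence lies in $F(f)$), and finally exploit this to obtain independence of $z$, maximality, and the classification.

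First I would note that since $z\in J(f)$, the family $\{f^n\rest{N}\}$ is non-normal on any neighborhood $N\ni z$, so Montel's theorem forces $\pu(\cc)\setminus\bigcup_n f^n(N)$ to have at most two elements. As $N$ shrinks, the union $\bigcup_n f^n(N)$ decreases and its complement grows, but its size stays bounded by $2$, so this complement stabilizes to the set $E_z$, with $\#E_z\leq 2$. Next, since $\bigcup_n f^n(N)$ is visibly forward invariant under $f$, its complement satisfies $f\inv(E_z)\subset E_z$; combined with the general bound $\#f\inv(A)\geq \#A$ for any $A\subset\pu(\cc)$ (each point has at least one preimage), finiteness forces $f\inv(E_z)=E_z$, hence also $f(E_z)=E_z$. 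The crucial observation is then that the equality $\#f\inv(E_z)=\#E_z$ can only hold if each $w\in E_z$ has a unique preimage in $\pu(\cc)$; so $f$ is totally ramified above $w$ with local degree $d$ at that preimage, and $f'$ vanishes there. Since $f$ permutes the finite set $E_z$, the multiplier of every cycle in $E_z$ is therefore zero, so $E_z$ consists of super-attracting periodic points, lies in $F(f)$, and in particular $E_z\cap J(f)=\emptyset$.

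With this in hand, independence of $z$ and maximality come together. Applied to any finite totally invariant $E'$, the same argument gives $E'\cap J(f)=\emptyset$; picking any $z\in J(f)$ and a small neighborhood $N\ni z$ disjoint from $E'$, forward invariance of $(E')^c$ implies $\bigcup_n f^n(N)\subset (E')^c$, whence $E'\subset E_z$. Applied with $E'=E_{z'}$ and symmetrically, this yields $E_{z'}=E_z$ for any $z,z'\in J(f)$; for arbitrary $E'$, it proves maximality. For the classification I would normalize $E$ by a Möbius transformation. If $\#E=1$, send $E$ to $\{\infty\}$: then $f\inv(\infty)=\{\infty\}$ means $f$ has no pole on $\cc$, hence is a polynomial. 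If $\#E=2$, send $E$ to $\{0,\infty\}$: either $f$ fixes both, in which case $f\inv(0)=\{0\}$ together with $f\inv(\infty)=\{\infty\}$ force $f(z)=cz^d$, conjugate to $z^d$ by a dilation; or $f$ swaps them, giving $f(z)=c/z^d$, conjugate to $z^{-d}$.

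The main obstacle, I expect, is the ramification/super-attraction step: without knowing $E_z\subset F(f)$, one cannot conclude that points of $J$ avoid $E_z$, and the independence argument collapses. Sidestepping this step would instead require the density of backward orbits of non-exceptional points in $J(f)$, a result typically established only after (or together with) this proposition.
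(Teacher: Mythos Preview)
Your proof is correct and shares the core ingredients with the paper's argument (Montel's bound $\#E_z\le 2$, the backward invariance $f^{-1}(E_z)\subset E_z$, and the classification by normalizing $E$ to $\{\infty\}$ or $\{0,\infty\}$), but the two proofs are organized differently around the independence claim. The paper establishes the classification first and then remarks that ``the independence with respect to $z$ follows easily'' --- the point being that once $f$ is identified as a polynomial or as $z\mapsto z^{\pm d}$, the set $E_z$ is pinned down intrinsically (as the set of points with a single preimage) and hence cannot depend on $z$. You instead insert the ramification/super-attraction step early: from $f^{-1}(E_z)=E_z$ you deduce that every point of $E_z$ is totally ramified, hence $E_z$ consists of super-attracting cycles and sits in $F(f)$; this lets you prove independence and maximality directly, before classifying. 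Your ordering has the advantage of treating the maximality assertion explicitly (the paper's sketch does not really address it), and it makes transparent why $E\subset F(f)$ --- a fact the paper only records as a remark after the proof. Both routes are short; yours is the more self-contained of the two.
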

 
 The set $E(f)$\glossary{$E(f)$: Exceptional set} is called the {\em exceptional set}\index{Exceptional set} of $f$. 
 Note that it is always an attracting periodic orbit, in particular it is contained in the Fatou set. 
 
 \begin{proof} 
It is immediate that
 $f^{-1}(E_z) \subset  E_z$ and by Montel's Theorem $\#E_z \leq 2$.    If $\#E_z= 1$, then we may conjugate so that 
 $E_z = \set{\infty}$ and since $f^{-1}(\infty) = \infty$ it follows that $f$ is a polynomial. If $\#E_z= 2$ we conjugate so that 
 $E_z = \set{0, \infty}$. If both points are fixed then it is easy to see that $f(z) = z^d$, and in the last case that $f(z)  = z^{-d}$. 
 The independence with respect to $z$ follows easily. 
 \end{proof}

The following is an immediate consequence of the definition of $E$. 
 
 \begin{cor}\label{cor:exceptional}
 For every $z\notin E$, $\displaystyle \overline {\bigcup\nolimits_{n\geq 0} f^{-n}(z) }\supset J$. 
 \end{cor}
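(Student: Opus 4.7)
The plan is to show that any $w \in J$ lies in $\overline{\bigcup_{n\geq 0} f^{-n}(z)}$ by directly unwinding the definition of $E$. I would fix an arbitrary $w \in J$ and an arbitrary open neighborhood $N$ of $w$; the corollary will follow if I can locate, for each such $N$, some preimage $f^{-n}(z)$ meeting $N$, since both $w$ and $N$ are arbitrary.

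The elementary observation is the tautology that $f^{-n}(z) \cap N$ is non-empty if and only if $z \in f^n(N)$. The task therefore reduces to showing $z \in \bigcup_{n \geq 0} f^n(N)$ for every neighborhood $N \ni w$.

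This is where the definition of $E$ enters. Because $w \in J$, the family $\set{f^n\rest{N}}$ fails to be normal, so by Montel's theorem the complement $X_N := \pu(\cc) \setminus \bigcup_{n\geq 0} f^n(N)$ contains at most two points. Moreover $X_N$ is monotone: if $N' \subset N$ then $X_{N'} \supset X_N$. Since $\#X_N \leq 2$ uniformly, as $N$ shrinks to $w$ the set $X_N$ must stabilize to the exceptional set $E = E_w$; in particular, for any neighborhood $N \ni w$ one can find a smaller $N' \subset N$ with $X_{N'} = E$, and hence $\bigcup_{n\geq 0} f^n(N) \supset \bigcup_{n\geq 0} f^n(N') = \pu(\cc) \setminus E$.

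Since $z \notin E$ by hypothesis, this containment gives $z \in \bigcup_{n\geq 0} f^n(N)$ as required, producing a point of $f^{-n}(z)$ inside $N$. There is no real obstacle here: the corollary is essentially a direct restatement of the characterization of $E$ as the set of points avoided by the forward orbit of every sufficiently small neighborhood of a Julia point, the only minor point being the monotone stabilization of $X_N$ which is forced by the uniform cardinality bound coming from Montel.
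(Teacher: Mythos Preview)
Your argument is correct and is exactly the unpacking of what the paper means when it says the corollary is ``an immediate consequence of the definition of $E$'': you translate ``$w$ is in the closure of the backward orbit of $z$'' into ``$z\in\bigcup_{n\ge 0}f^n(N)$ for every neighborhood $N\ni w$'', and then read this off from the fact that the avoided set $X_N$ is contained in $E$ once $N$ is small enough. The paper gives no further detail, so there is nothing to compare beyond this.
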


Let us also note the following consequence of Montel's Theorem. 
 
 \begin{cor}
If the Julia set $J$ has non-empty interior,   then $J  =\pu(\cc)$. 
 \end{cor}

 \subsection*{What does $J(f)$ look like?}
The Julia set is closed, invariant, and infinite (apply  
  Corollary \ref{cor:exceptional} to $z\in J$). 
 It can be shown that $J$ is perfect (i.e. has no isolated points).

  A first possibility is that {\bf $J$ is the whole sphere} $\pu(\cc)$. It is a  deep result by M. Rees \cite{rees}   
  that this
   occurs with positive probability   
 if a rational map is chosen at random in the  space of all rational maps of a given degree. 
   Note that this never happens for polynomials because $\infty$ is an attracting point. 
  
  Important explicit examples of rational maps with $J =\pu$ are {\em Lattès mappings},\index{Lattesmapping=Latt\`es mapping} which are rational maps coming from 
  multiplication on an elliptic curve. In a nutshell: let $E$ be an elliptic curve, viewed as a torus  
  $  \cc/ \Lambda$, where $\Lambda$ is a lattice, and let $m:\cc\to\cc$ 
  be a $\cc$-linear 
  map such that $m(\Lambda)\subset \Lambda$. Then $m$ commutes with the involution $z\mapsto -z$, so it descends 
  to a self-map of  the 
  quotient Riemann surface $E/(z\sim (-z))$ which turns out to be $\pu(\cc)$. The calculations can be worked out
   explicitly  for the doubling map
   using elementary properties of the  Weierstra\ss~$\wp$-function, and one finds for instance that 
   $f(z) =  \frac{(z^2+1)^2}{4z(z^2-1)}$ is a Lattès example (see e.g. \cite{milnor_lattes} for this and more about Lattès mappings).

  It can also happen that {\bf $J$ is a smooth curve}  (say of class $C^1$). A classical theorem of Fatou then asserts that $J$ must be contained in a circle (this includes lines in $\cc$), and more precisely:
  \begin{itemize}
 \item  either it is a circle: this happens for $z\mapsto z^{\pm d}$ but also for some Blaschke products;
 \item or it is an interval in a circle: this happens for interval maps such as the  Chebychev polynomial $z\mapsto 
 z^2-2$ for which $J = [-2, 2]$. 
 \end{itemize}

 Otherwise  {\bf $J$ is a self-similar ``fractal'' set}\index{Fractal set} with often complicated topological structure (see Figure \ref{fig:julia}). 
  
  \begin{figure}
    \includegraphics[width=\textwidth]{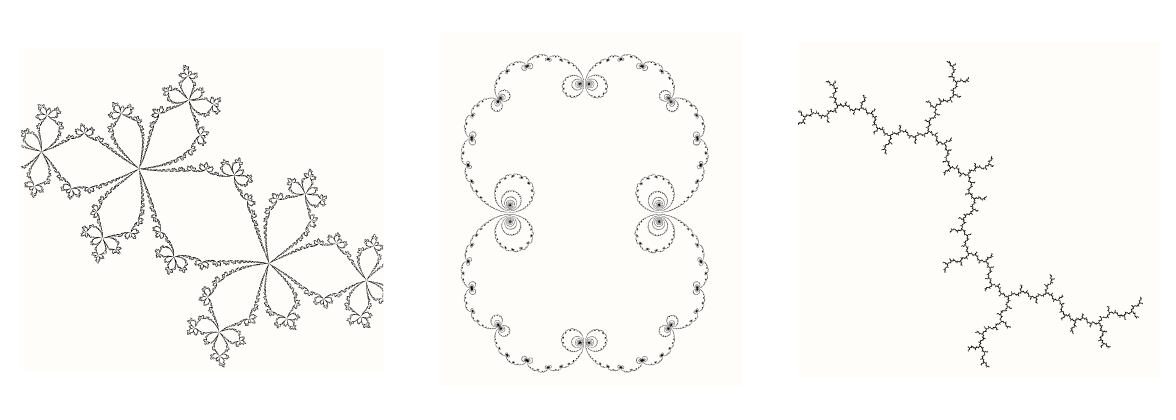}
  \caption{Gallery of quadratic Julia sets $J(f_c)$ with $f_c(z) = z^2+c$. 
  Left: $z^2+ (-.5+.556i)$, middle: $z^2+.2531$, right: $z^2+i$.}\label{fig:julia}
  \end{figure}

 \subsection*{Periodic points} A point $z\in \pu(\cc)$ is \emph{periodic} if there exists $n$ such that $f^n(z)   = z$. The period of 
 $z$ is the minimal such positive $n$,  And a fixed point is a point of period 1. 
 Elementary algebra shows that $f^n$ admits $d^n+1$ fixed points counting multiplicities. If $z_0$ has exact period $n$, the {\em multiplier}\index{Multiplier>of a periodic point} of $z_0$ is the complex number $(f^n)'(z_0)$ (which does not depend on the chosen Riemannian  metric on the sphere). It determines a lot of the local dynamics of $f^n$ near $z_0$.  There are three main cases:
 \begin{itemize}
 \item {\em attracting:}\index{Attracting periodic point}\index{Periodic point>attracting=(Attracting ---)} $\abs{(f^n)'(z_0)}<1$: then $z_0\in F$ and for   $z$ near $z_0$, $f^{nk}(z)\to z_0$ as $k\to \infty$;
 \item {\em repelling:}\index{Repelling periodic point}\index{Periodic point>repelling=(Repelling ---)} $\abs{(f^n)'(z_0)}>1$: then $z_0\in J$ since $\abs{(f^{nk})'(z_0)}\to  \infty$ as $k\to \infty$;
 \item {\em neutral:}\index{Neutral periodic point}\index{Periodic point>neutral=(Neutral ---)} $\abs{(f^n)'(z_0)}=1$: then $z_0$ belongs to either  $F$ or $J$.
 \end{itemize}
 The neutral case subdivides further into {\em rationally neutral} (or {\em parabolic}) when $(f^n)'(z_0)$ is a root of unity and 
 {\em irrationally neutral} otherwise. Parabolic points belong to the Julia set. The 
 dynamical classification of irrationally neutral points, which essentially boils down to the question whether they belong to 
 the Fatou or Julia set,  is quite delicate (and actually still not complete) and we will not need it. 
 
 \begin{thm}[Fatou, Julia]
 Repelling periodic points are dense in $J(f)$.
 \end{thm}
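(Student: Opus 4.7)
The strategy is a two-step argument: first show that periodic points of any kind are dense in $J(f)$, then invoke Fatou's theorem on the finiteness of non-repelling cycles to upgrade this to density of repelling ones.

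For the first step, fix $z_0\in J(f)$ and an arbitrarily small open neighborhood $U\ni z_0$. After discarding finitely many bad points (fixed points and critical values of $f$, which are in finite number while $J$ is infinite by Corollary~\ref{cor:exceptional}), we may assume $z_0$ is neither a fixed point nor a critical value. Pick a preimage $w_0\neq z_0$ with $f(w_0)=z_0$, choose a local holomorphic inverse branch $g$ of $f$ at $z_0$ with $g(z_0)=w_0$, and shrink $U$ so that $g$ is defined on $U$ and $g(U)\cap U=\emptyset$. Form the auxiliary family
\[
\phi_n(z)\;=\;\frac{f^n(z)-z}{f^n(z)-g(z)},\qquad z\in U,\ n\ge 1.
\]
The point of this construction is that the three values $0,1,\infty$ directly encode periodicity: $\phi_n(z)=0$ means $f^n(z)=z$; $\phi_n(z)=\infty$ means $f^n(z)=g(z)$, hence $f^{n+1}(z)=z$; and $\phi_n(z)=1$ would force $z=g(z)$, which is excluded by $g(U)\cap U=\emptyset$. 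Consequently, if $U$ contained \emph{no} periodic points, then every $\phi_n$ would avoid $\{0,1,\infty\}$, and Montel's theorem would make $\{\phi_n\}$ a normal family on $U$.

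One then has to convert normality of $\{\phi_n\}$ back into normality of $\{f^n\}$, which contradicts $z_0\in J(f)$. Solving the defining relation algebraically gives
\[
f^n(z)\;=\;\frac{z-g(z)\phi_n(z)}{1-\phi_n(z)},
\]
and since $g(z)\not\equiv z$ on $U$ any locally uniform limit of a subsequence of $\phi_n$ produces a locally uniform limit of $f^n$ in the spherical sense. This contradiction establishes that every neighborhood of $z_0$ contains a periodic point, hence periodic points are dense in $J(f)$.

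Finally, Fatou's classical bound shows that $f$ has only finitely many non-repelling periodic cycles (the standard proof uses a holomorphic perturbation that turns a non-repelling cycle into an attractor, each of which must absorb a distinct critical orbit, giving at most $2d-2$ of them). Combined with the previous step and the fact that $J(f)$ is perfect, this yields density of repelling cycles in $J(f)$. The main obstacle is the normality/non-normality transfer in the middle step: one must be careful that the inversion formula genuinely extracts a spherically convergent subsequence of $f^n$, which relies on the fact that $g(z)\neq z$ on $U$ so that the limit denominator $1-\phi_\infty$ does not vanish identically.
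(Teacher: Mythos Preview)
Your argument is correct and is in fact one of the two classical proofs of this result, but it is \emph{not} the one sketched in the paper. The paper proceeds directly: given $z_0\in J$ and a small neighborhood $N$, Montel's theorem forces $\bigcup_n f^n(N)$ to cover $J$; refining this, one finds a small disk $D$ near $z_0$ and an $n$ with $f^n$ univalent on $D$ and $D\Subset f^n(D)$. The inverse branch $(f^n)^{-1}:f^n(D)\to D\Subset f^n(D)$ is then a strict contraction of the Poincar\'e metric on $f^n(D)$, hence has an attracting fixed point, which is a \emph{repelling} periodic point for $f$ sitting near $z_0$.

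The difference is structural. Your route first produces periodic points of unspecified type via the cross-ratio $\phi_n=\dfrac{f^n-z}{f^n-g}$ and Montel, and then appeals to the finiteness of non-repelling cycles to upgrade. This is clean and entirely Montel-based, but in the paper's logical ordering the Fatou--Shishikura finiteness theorem is stated \emph{after} the density result, so your proof imports a result the paper has not yet established (there is no actual circularity, since Fatou's $6d-6$ bound does not use density of repelling points, but it does reorder the exposition). The paper's hyperbolic-metric argument is more self-contained in this respect: it outputs repelling points directly, with no need for the finiteness theorem. On the other hand, your approach avoids the ``with some more work'' step of arranging univalence and the inclusion $D\Subset f^n(D)$, which the paper leaves unjustified.

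One small remark on your normality transfer: you should note explicitly that since each $\phi_n$ omits the value $1$, any spherical limit $\psi$ of a subsequence is either identically $1$ (in which case $f^{n_k}\to\infty$ uniformly, as the numerator $z-g(z)$ is nonvanishing) or never $1$ on $U$ by Hurwitz, so the inversion formula gives a genuine meromorphic limit. You allude to this but it deserves a sentence.
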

 
 This is an explanation for the local self-similarity of $J$. In particular we   see that if 
  $z_0$ is repelling with a non-real multiplier, then
 $J$ has a ``spiralling structure'' at $z_0$, in particular it cannot be smooth.  A delicate result by Eremenko and Van Strien \cite{eremenko_vanstrien} asserts conversely that  if
 all periodic points multipliers are real, then $J$ is contained in a circle. 
 
 \begin{proof}[Idea of proof]
 The first observation  is that if $g$ is a holomorphic  self-map of an open topological disk $\Delta$ such that $\overline{g(\Delta)}
 \subset \Delta$, then by contraction of the Poincaré metric, 
 $g$ must have an attracting fixed point. If now $z_0\in J$ is arbitrary we know from Montel's Theorem and the classification 
 of exceptional points that for any neighborhood
 $N \ni z_0$, $\bigcup_{n\geq 0} f^n(N)$ eventually covers $J$, hence $z_0$. With some more work, it can be  ensured  that there exists a small open disk $D$ close to  $z_0$ and an integer $n$ such that $f^n$ is univalent on $D$ and $\overline D \subset f^n (D)$. Then the result follows from the initial observation. 
 \end{proof} 
 
 In particular a rational map admits infinitely many repelling points. It turns out that
 conversely the number of non-repelling periodic points is finite. The first step is the following basic result. 
 
 \begin{thm}[Fatou]\label{thm:attracting critical}
 Every attracting periodic orbit attracts a critical point. 
 \end{thm}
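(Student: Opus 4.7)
Up to replacing $f$ by an iterate, we may assume the attracting cycle is a fixed point $p$, so $\abs{f'(p)}<1$. Let $U$ be the connected component of the basin $\set{z: f^n(z)\to p}$ that contains $p$ (the so-called \emph{immediate basin}). My goal is to argue by contradiction: I will assume that no critical point of $f$ lies in $U$ and derive a contradiction.

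The first step is to set up a good exhaustion of $U$. Pick a small topological disk $D\ni p$ with $f(D)\Subset D$ (this exists by the attracting condition and Taylor expansion). Define inductively $V_0=D$ and $V_{k+1}$ to be the connected component of $f\inv(V_k)$ containing $V_k$. Then $V_k\subset V_{k+1}$, each $V_k\subset U$, and $\bigcup_k V_k=U$. Under the assumption that $U$ contains no critical point, each restriction $f:V_{k+1}\to V_k$ is a covering map; since $V_k$ is simply connected (by induction, starting from the disk $D$), such a covering must be trivial, so $f:V_{k+1}\to V_k$ is in fact a biholomorphism. Passing to the limit, $f:U\to U$ is a biholomorphic automorphism of $U$.

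Now I use hyperbolicity: because the Julia set is infinite and contained in $U^\complement$, the open set $U\subset\pu(\cc)$ omits at least three points, so by the Corollary to the uniformization statement, $U$ is hyperbolic and carries its Poincaré metric. Any biholomorphism of a hyperbolic Riemann surface is an isometry of this metric. But the Schwarz–Pick Lemma recalled above says that a holomorphic self-map is an isometry only if it is an automorphism with no strict contraction; in particular, at the fixed point $p$, the derivative $f'(p)$ computed in the hyperbolic metric must have modulus one. Since the hyperbolic metric is a smooth conformal metric near $p$, this is equivalent to $\abs{f'(p)}=1$ in any chart, contradicting the attracting hypothesis $\abs{f'(p)}<1$.

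The step I expect to require the most care is the inductive claim that each $V_k$ is simply connected and that $f:V_{k+1}\to V_k$ is a biholomorphism: one has to check that the component of $f\inv(V_k)$ containing $V_k$ is well-defined, contained in $U$, and that the covering restricted to this component has degree one because no critical point intervenes. Everything else is a direct application of the hyperbolic contraction principle from the previous section.
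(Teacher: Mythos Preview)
Your proof is correct and follows the same strategy as the paper's: assume the immediate basin $U$ contains no critical point and derive a contradiction via the hyperbolic metric. The paper's version is shorter, though: it observes directly that the absence of critical points makes $f\rest{U}:U\to U$ a (possibly higher-degree) covering, hence a local isometry of the hyperbolic metric, which already contradicts $\abs{f'(p)}<1$ at the fixed point. Your exhaustion by the $V_k$ and the inductive proof that each is simply connected (yielding that $f\rest{U}$ is in fact a biholomorphism) are correct but not needed---``local isometry'' is enough for the contradiction.
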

 
By this mean that for every attracting periodic orbit $A$ there exists a critical point $c$ such that 
$f^n(c)$ tends to $A$ as $n\to\infty$. 
 This is a basic instance of a general heuristic principle: the dynamics is 
 determined by the behavior of critical points. 
 
 \begin{proof}
 Let $z_0$ be an attracting point of period $n$. For expositional ease we replace $f$ by $f^n$  so assume $n=1$. Let 
 $$\mathcal{B} = \set{z\in \pu, \ f^k(z)\underset{k\to \infty}\longrightarrow z_0}$$ be the {\em basin}\index{Basin>of an attracting point} of $z_0$ 
 and $\mathcal{B}_0$ be the {\em immediate basin}, that is the connected component of $z_0$ in $\mathcal{B}$. Then $f$ maps 
 $\mathcal{B}_0$ into itself and $\mathcal{B}_0$ is hyperbolic since $\mathcal{B}_0\cap J = \emptyset$.   
 If $f\rest{\mathcal{B}_0}$ had no critical points,  then $f\rest{\mathcal{B}_0}$ would be a covering, hence a local 
 isometry for the hyperbolic metric. This contradicts the fact that $z_0$ is attracting. 
   \end{proof}

 Since $f$ has $2d-2$ critical points we infer:
 
 \begin{cor}
 A rational map of degree $d$ admits at most $2d-2$ attracting periodic orbits. 
 \end{cor}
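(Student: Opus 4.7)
The plan is to combine Theorem \ref{thm:attracting critical} with the count of critical points coming from the Riemann-Hurwitz formula. The key observation to add is that the assignment from attracting cycles to critical points produced by the theorem is injective (in the sense that a single critical point cannot be attracted to two distinct cycles).

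First I would recall that any rational map $f$ of degree $d\geq 2$ has exactly $2d-2$ critical points on $\pu(\cc)$, counted with multiplicity. This comes from the Riemann-Hurwitz formula applied to $f:\pu(\cc)\to\pu(\cc)$: since $\chi(\pu(\cc))=2$ we get $2 = 2d - \sum_{c\in\crit(f)}(\mult_c(f)-1)$, so the total ramification is $2d-2$.

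Next, given an attracting periodic orbit $\mathcal{O}$, Theorem \ref{thm:attracting critical} produces a critical point $c$ lying in its basin $\mathcal{B}(\mathcal{O})$, i.e.\ such that $f^k(c)\to \mathcal{O}$ as $k\to\infty$. The crucial point is that the basins of two distinct attracting orbits are disjoint: the forward orbit of a given point $c$ has at most one limit set among attracting cycles, since $f^k(c)$ must converge (in the Hausdorff sense of accumulation) to a single cycle. Hence a given critical point can be assigned to at most one attracting orbit.

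Therefore the map $\mathcal{O}\mapsto c(\mathcal{O})$ from the set of attracting periodic orbits to $\crit(f)$ is well-defined (after a choice) and injective, which yields the bound
\[
\#\{\text{attracting periodic orbits}\} \leq \#\crit(f) \leq 2d-2.
\]
There is no real obstacle here; the only point requiring a minute of care is the disjointness of basins, which however is immediate from the uniqueness of limits of convergent sequences.
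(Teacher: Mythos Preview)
Your proof is correct and follows exactly the same approach as the paper: combine Theorem~\ref{thm:attracting critical} with the fact that $f$ has $2d-2$ critical points. The paper states the corollary in one line after noting the critical point count; you have simply made explicit the Riemann--Hurwitz computation and the disjointness of basins that the paper leaves implicit.
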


  \begin{thm} [Fatou, Shishikura] \label{thm:fatou_shishikura}
 A rational map   admits  only finitely many non-repelling periodic orbits.
 \end{thm}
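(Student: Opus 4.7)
The strategy is to reduce the bound on non-repelling cycles to the $2d-2$ bound on attracting cycles established in the previous corollary, via a perturbation argument in the parameter space of rational maps of degree $d$. The idea is: if $f$ had more than $2d-2$ non-repelling cycles, one could move them all into the attracting regime simultaneously by a small perturbation of $f$, producing a rational map violating the corollary.

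The space $\mathrm{Rat}_d$ of degree $d$ rational maps is a connected complex manifold of dimension $2d+1$. If $C$ is a periodic cycle of $f$ of period $n$ with multiplier different from $1$, the equation $f^n(z)=z$ and the implicit function theorem show that $C$ persists holomorphically under small perturbations of $f$, and its multiplier $\rho_C(g)$ is a holomorphic function of the parameter $g$ near $f$. Parabolic cycles (multiplier a root of unity at the appropriate iterate) are an exception---the cycle may split upon perturbation---so I would first set them aside and handle them separately at the end.

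Assuming $f$ has no parabolic cycle, suppose for contradiction it admits $N > 2d-2$ non-repelling cycles $C_1,\dots, C_N$. Consider the holomorphic map $\Phi: g\mapsto (\rho_{C_1}(g),\dots,\rho_{C_N}(g))$, defined on a neighborhood of $f$ in $\mathrm{Rat}_d$, with $\Phi(f)\in\overline{\dd}^N$. The desired contradiction would follow from finding $g$ arbitrarily close to $f$ with $\Phi(g)\in \dd^N$: such a $g$ would have $N$ attracting cycles. The main obstacle is that the naive plan---appeal to the open mapping theorem---fails for a dimension reason: since $\Phi$ is invariant under Möbius conjugation, it factors through the $(2d-2)$-dimensional moduli space of $\mathrm{Rat}_d$, so it cannot be a submersion as soon as $N>2d-2$. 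Thus one cannot decouple the multipliers by soft complex-analytic arguments alone.

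The technical heart of the theorem is to bypass this dimension obstruction and show that the multipliers of distinct non-repelling cycles can nevertheless be independently decreased in modulus. This is Shishikura's quasiconformal surgery: for each cycle $C_i$ one constructs an explicit quasiconformal deformation of $f$, essentially supported in a neighborhood of $C_i$, that modifies $\rho_{C_i}$ to a prescribed value in $\dd$ while leaving the other multipliers almost unchanged. Concatenating finitely many such local surgeries produces the sought-for $g$ and hence the contradiction. Finally, parabolic cycles are incorporated through Leau--Fatou flower theory: each parabolic cycle carries at least one attracting petal whose basin must meet the critical set, so parabolic cycles themselves consume critical orbits as in Fatou's theorem (Theorem \ref{thm:attracting critical}); alternatively a generic perturbation splits each parabolic cycle into cycles handled by the preceding argument. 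Either way one obtains the finiteness claimed in the theorem, and in fact the sharp Shishikura bound by $2d-2$.
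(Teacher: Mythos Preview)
Your proposal is correct in spirit and aims directly at Shishikura's sharp bound $2d-2$ via quasiconformal surgery. The paper, however, presents a different and more elementary argument: Fatou's original proof of the weaker bound $6d-6$. In the paper's approach, one first observes that cycles with multiplier equal to $1$ already attract a critical point through their attracting petal (an easy extension of Theorem~\ref{thm:attracting critical}), so there are at most $2d-2$ of these. For the remaining $N$ neutral cycles with multiplier $\neq 1$, Fatou's key point is that under a \emph{generic} perturbation at least half of them become attracting; this is a soft averaging argument using only the holomorphic dependence of multipliers, with no need to control all of them simultaneously. One deduces $N/2\le 2d-2$, hence $N\le 4d-4$, and altogether at most $6d-6$ non-repelling cycles. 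The paper then simply cites Shishikura for the sharp bound without elaboration.

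What each route buys: Fatou's argument is entirely elementary and sidesteps the dimension obstruction you correctly identified, at the price of a non-optimal constant. Your approach recovers the sharp $2d-2$ but imports the substantial machinery of quasiconformal surgery. One caveat: your description of the surgery as a small perturbation in $\mathrm{Rat}_d$ that tweaks one multiplier while leaving the others ``almost unchanged'' is schematic. Shishikura's construction is not a perturbation in the analytic topology but rather produces, via a global quasiconformal conjugacy, a new rational map of the same degree in which all the given non-repelling cycles have been converted to (super)attracting ones; the critical-point count then applies to that new map. Your handling of parabolic cycles via Leau--Fatou petals is in line with both approaches.
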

 
  \begin{proof}[Idea of proof]
  The bound $6d-6$ was first obtained by Fatou 
  by a beautiful perturbative argument. First, the previous theorem (and its corollary) can be 
easily extended to the case of periodic orbits with multiplier equal to 1 (these   are attracting 
 in a certain direction). So there are at most $2d-2$ periodic points with multiplier 1. Let now $N$ be the number of neutral periodic 
 points with multiplier different from 1. Fatou shows that under a generic perturbation of $f$, 
 at least half of them become attracting. Hence $N/2\leq 2d-2$ and the result follows. 
 
 The sharp bound $2d-2$ for the total number of non-repelling cycles was obtained by Shishikura. 
 \end{proof}
 
 \subsection*{Fatou dynamics}
 The dynamics in the Fatou set can be understood completely. Albeit natural,  this is far from obvious: one can
 easily imagine an open set $U$  on which the   iterates form a normal family, yet the asymptotic behavior of  
 $(f^n\rest{U})$ is complicated to analyze. 
Such a phenomenon  actually happens   
  in transcendental or   higher dimensional   dynamics, but not in our setting: 
 the key point is the following deep and celebrated ``non-wandering domain theorem''.

 \begin{thm}[Sullivan]\index{Sullivan>non-wandering domain theorem}
 Every component $U$ of the Fatou set
  is ultimately periodic, i.e. there exist   integers $ l>k$   such that $f^l(U) = f^k(U)$. 
  \end{thm}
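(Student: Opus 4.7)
The plan is to follow Sullivan's original argument via quasiconformal surgery, exploiting the tension between the infinite-dimensional nature of deformations supported on a wandering domain and the finite dimension of the space of rational maps of degree $d$.

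First, I would argue by contradiction and assume that some Fatou component $U = U_0$ is \emph{wandering}, i.e. that the forward images $U_n := f^n(U_0)$ are pairwise disjoint. The first task is to produce, out of this wandering disk, an infinite-dimensional family of quasiconformal deformations of $f$. Concretely, I would choose a Beltrami differential $\mu_0$ on $U_0$ (a measurable $(-1,1)$-form with $\|\mu_0\|_\infty<1$), extend it by zero outside the grand orbit of $U_0$, and propagate it along the orbit $(U_n)$ by pushing forward under the iterates of $f$ in such a way that the resulting $\mu$ is $f$-invariant: $f^*\mu=\mu$ on $\pu(\cc)$. Here one uses that the critical values of $f$ eventually escape any wandering component, so the propagation can be arranged on each $U_n$ despite the fact that $f$ is only locally invertible.

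Next I would invoke the \textbf{Measurable Riemann Mapping Theorem} of Ahlfors-Bers: there exists a quasiconformal homeomorphism $\phi_\mu:\pu(\cc)\to\pu(\cc)$, unique after normalization, solving the Beltrami equation $\bar\partial\phi_\mu = \mu\,\partial\phi_\mu$, and depending holomorphically on the parameter $t$ when $\mu$ is replaced by $t\mu$ with $|t|<1/\|\mu\|_\infty$. The $f$-invariance of $\mu$ then forces
\[
f_\mu := \phi_\mu\circ f\circ \phi_\mu^{-1}
\]
to have zero complex dilatation, hence (by Weyl's lemma / GAGA) to be a rational map of the same degree $d$ as $f$.

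Finally I would derive the contradiction by a parameter count. The space $\mathrm{Rat}_d$ of rational maps of degree $d$ is a quasi-projective variety of dimension $2d+1$, and the quotient by the action of $\mathrm{PSL}(2,\cc)$ by conjugation has dimension $2d-2$. On the other hand, the space of $f$-invariant Beltrami differentials arising from the above construction is infinite-dimensional, because $L^\infty(U_0)$ has infinite dimension and distinct $\mu_0$'s produce different conformal structures on $\phi_\mu(U_0)$. The main obstacle, and the technical heart of the proof, is to show that this infinite-dimensional freedom survives at the level of rational maps modulo Möbius conjugacy, i.e. that the map $\mu_0\mapsto [f_\mu]\in \mathrm{Rat}_d/\mathrm{PSL}(2,\cc)$ has infinite-dimensional image. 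This is done by constructing, for a well-chosen finite-dimensional family $\mu_0(s)$, explicit quasiconformal invariants on $U_0$ (for instance moduli of annuli, or the isomorphism class of the Riemann surface $\phi_\mu(U_0)$) that vary non-trivially with $s$, together with the observation that a Möbius conjugacy between $f_\mu$ and $f_{\mu'}$ would have to respect the induced conformal structures on the corresponding wandering components. Comparing dimensions then yields the required contradiction and forces $U$ to be eventually periodic.
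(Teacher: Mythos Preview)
The paper does not prove Sullivan's theorem: it is merely announced as a ``deep and celebrated'' result and then used as a black box to set up the classification of periodic Fatou components. There is therefore no argument in the paper to compare your proposal against.

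That said, your outline is the right one and matches Sullivan's original strategy: manufacture an infinite-dimensional family of $f$-invariant Beltrami coefficients supported on the grand orbit of a hypothetical wandering domain, straighten via the Ahlfors--Bers theorem to obtain rational maps $f_\mu$ of the same degree, and contradict the finite-dimensionality of $\mathrm{Rat}_d/\PSL$. Two places where your sketch is loose deserve mention. First, the propagation of $\mu$ is more naturally carried out by \emph{pull-back} (that is what $f^*\mu=\mu$ means), and the relevant fact is that all but finitely many of the $U_n$ are free of critical \emph{values}, so that $f:U_n\to U_{n+1}$ is eventually an unbranched covering and one can safely start the construction far along the orbit and pull back to the whole grand orbit. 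Second, the ``main obstacle'' you flag --- showing that the map $\mu\mapsto [f_\mu]$ really has infinite-dimensional image --- is the genuine heart of the matter, and Sullivan's argument bifurcates according to the (eventual) connectivity of the $U_n$: the simply connected case and the finitely connected case are handled by different mechanisms, and one must also rule out infinite connectivity. Your description via ``quasiconformal invariants that vary non-trivially'' is the right intuition but hides these case distinctions. As a high-level sketch of the proof your proposal is accurate; it would need these refinements to become complete.
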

 
 It remains to classify periodic components. 
 
 \begin{thm}
 Every  component $U$ of period $k$ of the Fatou set is 
 \begin{itemize}
 \item either an attraction domain:   as $n\to\infty$, 
  $(f^{nk})$ converges locally uniformly on $U$ to   a periodic cycle (attracting or parabolic);
 \item or a rotation domain: $U$ is biholomorphic to a disk or an annulus  and $f^k\rest{U}$ is holomorphically conjugate to an 
 irrational rotation.
 \end{itemize}
 \end{thm}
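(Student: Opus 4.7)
After replacing $f$ by $f^k$ I may assume $k=1$, so $f(U)=U$. The family $\{f^n\rest{U}\}_{n\geq 0}$ is normal, so every subsequence has a further subsequence converging locally uniformly on $U$ to some holomorphic $\phi:U\to \pu(\cc)$. My plan is to establish the dichotomy: either every such subsequential limit $\phi$ is constant, in which case $U$ is an attraction domain, or some $\phi$ is nonconstant, in which case $f\rest{U}$ is an automorphism of $U$ and $U$ is a rotation domain.

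Since $U^{\complement}\supset J(f)$ contains more than two points, $U$ is hyperbolic and carries a Poincaré metric $\rho_U$. By Schwarz-Pick, $f\rest{U}$ is a weak contraction of $\rho_U$, and is an isometry only if it is an automorphism. Suppose $f\rest{U}$ is not an automorphism; I then argue that $f^n\rest{U}$ converges locally uniformly to a constant $p\in\bar U$. The key input is that if $\phi=\lim f^{n_j}$ were nonconstant, then $\phi(U)\subset U$ is open (the interior of $\bar U$ equals $U$ since $\partial U\subset J(f)$ has empty interior); extracting a further limit $f^{n_{j+1}-n_j}\to g$, one checks using $g\circ f^{n_j}=f^{n_{j+1}}$ that $g=\id$ on $\phi(U)$ and hence on $U$ by analytic continuation; but then $f^{m_j}\to\id$ combined with Hurwitz forces $f\rest{U}$ itself to be an automorphism, a contradiction. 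Uniqueness of the constant limit follows from a similar shift argument. In this attraction case, $p$ is a fixed point of $f$ with $\abs{f'(p)}\leq 1$: if $\abs{f'(p)}<1$ one recovers $U\subset \mathcal{B}_0(p)$; if $\abs{f'(p)}=1$ and $p\in \partial U$, the Leau-Fatou flower theorem forces $f'(p)$ to be a root of unity and identifies $U$ as a component of a repelling petal, i.e.\ a parabolic basin.

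In the automorphism case, $(f^n)\subset \mathrm{Aut}(U)$ is relatively compact by normality, and its closure $G$ is a compact abelian Lie subgroup. The final step is to classify the hyperbolic plane domains $U$ admitting such a subgroup containing an element of infinite order: only $\dd$ and the round annulus $\{r<\abs{z}<1\}$ qualify, in which case $G$ sits inside the rotation subgroup and $f\rest{U}$ is conjugate to an irrational rotation. Rational angles are ruled out because they would yield $f^N=\id$ on $U$ and hence on $\pu(\cc)$ by analytic continuation, contradicting $d\geq 2$. The punctured disk $\dd^*$ is excluded because then its puncture would be a fixed point of $f$ with irrational neutral multiplier whose Siegel linearization disk is a Fatou component containing the puncture, which would connect with $U$ and contradict the fact that $U$ is the Fatou component $\dd^*$.

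\textbf{Main obstacle.} The delicate ingredient is the Denjoy-Wolff-type dichotomy forcing convergence to a constant whenever $f\rest{U}$ is not an automorphism, together with the rigidity classification of hyperbolic planar domains carrying a compact abelian group of automorphisms with an element of infinite order. The Leau-Fatou flower analysis for parabolic fixed points and the exclusion of the punctured disk (and of higher connectivity) are the other subtle points of the argument.
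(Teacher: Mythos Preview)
The paper states this classification theorem without proof, deferring to standard references such as \cite{milnor, carleson_gamelin}; there is therefore no argument in the paper to compare against.

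Your plan follows the classical route found in those references and is correct in outline. Two small comments on the sketch. First, the step ``$f^{m_j}\to\id$ forces $f\rest{U}$ to be an automorphism'' is most cleanly done without Hurwitz: if $f(a)=f(b)$ for $a\neq b$ in $U$, then $f^{m_j}(a)=f^{m_j}(b)$ for all $j$, and passing to the limit gives $a=b$; so $f\rest{U}$ is injective, and since $f:U\to U$ is proper it is a biholomorphism. Second, your exclusion of $\dd^*$ via a Siegel disk is valid (the conjugacy on $\dd^*$ does extend across the puncture, so the point is genuinely linearizable), but a shorter argument is available: the biholomorphism $\dd^*\to U$ extends to a biholomorphism $\dd\to U\cup\{p\}$, so $p$ is an isolated point of $\partial U\subset J(f)$, contradicting the fact that $J(f)$ is perfect.
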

 
 \section{Equilibrium measure}
In many arithmetic  applications the  most important dynamical object is the {\em equilibrium measure}.\index{Equilibrium measure}\index{Measure>equilibrium=(Equilibrium ---)} The following theorem 
summarizes its construction and main properties. 

\begin{thm}[Brolin, Lyubich, Freire-Lopes-Mañé] \label{thm:equilibrium} 
Let $f$ be a rational map   on $\pu(\cc)$ of degree $d\geq 2$. Then for any $z\notin E(f)$, the sequence of probability measures $$\mu_{n,z}  = \unsur{d^n} \sum_{w\in f^{-n} (z)} \delta_w$$ (where pre-images are counted with their multiplicity) 
converges weakly to a Borel probability measure $\mu = 
\mu_f$ on $\pu(\cc)$ 
enjoying the following properties:
\begin{enumerate}[(i)]
\item $\mu$ is invariant, that is  $f_*\mu = \mu$, and has the ``constant Jacobian'' property $f^*\mu = d \mu$;
\item $\mu$  is ergodic; 
\item $\supp(\mu) = J(f) $;
\item $\mu$ is repelling, that is for $\mu$-a.e. $z$, $\displaystyle \liminf_{n\to\infty} \unsur{n}\log\norm{df^n_z}\geq \frac{\log d}{2}$, 
where the norm of the differential is computed with respect to any smooth Riemannian metric on $\pu(\cc)$;
\item $\mu$ describes the asymptotic distribution of repelling periodic orbits, that is, if $\mathrm{RPer}_n$ denotes the set of repelling periodic points of exact period $n$, then $\unsur{d^n}\sum_{z\in \mathrm{RPer}_n} \delta_z$ converges to $\mu$ as $n\to \infty$. 
\end{enumerate}
\end{thm}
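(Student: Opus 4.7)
The plan is to construct $\mu$ via complex potential theory as the limit of the normalized pullbacks $d^{-n}(f^n)^*\omega$ of a smooth reference form; then to upgrade this convergence to the atomic measures $\mu_{n,z}$ for non-exceptional $z$; and finally to derive the remaining properties from the constant-Jacobian identity and this convergence. Let $\omega$ be the normalized Fubini--Study form. Since $f^*\omega$ is cohomologous to $d\omega$, the $dd^c$-lemma gives a smooth function $\psi$ with $f^*\omega = d\omega + dd^c\psi$. The series $u=\sum_{k\geq 0}d^{-k-1}\psi\circ f^k$ converges uniformly, so $d^{-n}(f^n)^*\omega = \omega + dd^c u_n$ converges weakly to $\mu := \omega + dd^c u$. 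Pulling back the prelimits yields $f^*\mu = d\mu$, and $f_*\mu = \mu$ follows from $f_*f^* = d\cdot\mathrm{id}$, which is (i). For (iii), on a Fatou component the Schwarz--Pick contraction forces $d^{-n}(f^n)^*\omega\to 0$, so $\supp(\mu)\subset J(f)$; the reverse inclusion comes from total invariance of $\supp(\mu)$ together with Corollary~\ref{cor:exceptional}.

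The heart of the proof is the convergence $\mu_{n,z}\to\mu$ for $z\notin E(f)$. In an affine chart one has, up to bounded correction,
\[
\mu_{n,z} - d^{-n}(f^n)^*\omega = dd^c v_{n,z},\qquad v_{n,z} := d^{-n}\log\lvert f^n - z\rvert.
\]
The functions $v_{n,z}$ are subharmonic and locally uniformly bounded above. Any $L^1_{\mathrm{loc}}$-limit $v$ of a subsequence therefore satisfies $dd^c v = \mu - \omega$, so $v - u$ is harmonic on $\pu(\cc)$ and hence constant, which forces the whole sequence to converge. The only way the extraction could collapse would be $v_{n,z}\to -\infty$, meaning that the fibers $f^{-n}(z)$ concentrate on a proper closed totally invariant set; this is precisely what $z\notin E(f)$ excludes, through Montel's theorem and Corollary~\ref{cor:exceptional}. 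This step is the main obstacle, because it is where the global potential-theoretic control must be combined with the dynamical information carried by the exceptional set.

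The remaining properties follow once (i) and the convergence of $\mu_{n,z}$ are at hand. For (ii), if $A$ is totally invariant with $0<\mu(A)<1$ one picks a density point $z_0$ of $A$; the relation $f^*\mu = d\mu$ implies that $\mu_{n,z_0}$ is concentrated on $f^{-n}(A)$ up to a vanishing error, while the previous paragraph gives $\mu_{n,z_0}\to\mu$, contradicting $\mu(A)<1$. Property (iv) follows from Przytycki's argument: the constant Jacobian $f^*\mu = d\mu$ together with a Jensen-type estimate against the $2d-2$ critical points bounds $\chi(\mu) = \int\log\lVert df\rVert\,d\mu$ below by $\tfrac12\log d$. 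For (v), $f^n$ has $d^n + 1$ fixed points counted with multiplicity while by Fatou--Shishikura only $O(1)$ of them are non-repelling, so the normalized counting measures on $\mathrm{RPer}_n$ and on $\fix(f^n)$ have the same weak limit; identifying this limit with $\mu$ is done by applying the previous step to a well-chosen non-exceptional $z_0$ and matching each fixed point of $f^n$ with a nearby atom of $f^{-n}(z_0)$, the Koebe distortion theorem ensuring that the matching is essentially bijective on the repelling part.
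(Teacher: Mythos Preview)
Your construction of $\mu$ via the telescoping $dd^c$-series for $d^{-n}(f^n)^*\omega$ is exactly the paper's approach. The paper, however, addresses only the convergence of $\mu_{n,z}$ (and even then only for Lebesgue-a.e.\ $z$, via an averaging/Borel--Cantelli argument, deferring the case of an arbitrary non-exceptional $z$ to \cite{guedj}); it does not discuss (i)--(v) at all, so your sketches there go beyond what the paper attempts.

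There is a genuine gap in your argument for $\mu_{n,z}\to\mu$. With your notation $\mu_{n,z} - d^{-n}(f^n)^*\omega = dd^c v_{n,z}$, if $v_{n_k,z}\to v$ in $L^1_{\rm loc}$ then what follows is $dd^c v = \nu - \mu$, where $\nu$ is the corresponding cluster value of $(\mu_{n,z})$ --- not ``$dd^c v = \mu - \omega$'' as you write. Thus the step ``$v-u$ is harmonic on $\pu(\cc)$, hence constant'' already presupposes $\nu=\mu$, which is precisely what must be proved. Ruling out the collapse $v_{n,z}\to-\infty$ via $z\notin E(f)$ is on the right track and guarantees that \emph{some} cluster value exists, but compactness of quasi-psh functions alone cannot identify it. The missing ingredient is an invariance property of the limit potential: normalizing so that $v_{n,z}\le 0$, the relation $v_{n,z}\circ f = d\cdot v_{n+1,z}$ forces $V:=\limsup_n v_{n,z}$ to satisfy $V\circ f = d\,V$; if $V\not\equiv 0$ then (after usc regularization) $\{V<0\}$ is a nonempty open totally invariant set on which the dynamics contracts, and one argues that this forces $z$ into $E(f)$. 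This is the ``finer analysis'' the paper alludes to. (Some of your later sketches have analogous soft spots --- for instance in (ii), weak convergence of $\mu_{n,z_0}$ controls $\mu(\overline A)$, not $\mu(A)$ for a merely Borel totally invariant $A$ --- but the identification of the limit in the convergence step is where the real difficulty lies.)
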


Recall that if $\mu$ is a probability measure, its image $f_*\mu$ under $f$  is defined by 
$f_*\mu(A) = \mu(f\inv(A))$ for any Borel set $A$. The pull-back $f^*\mu$ is conveniently defined by its action on continuous functions: 
$\langle f^*\mu , \varphi\rangle  = \langle  \mu, f_*\varphi\rangle$, where $f_*\varphi$ is defined by 
$f_*\varphi(x) = \sum_{y\in f\inv(x)} \varphi(y)$.  
Recall also that ergodicity\index{Ergodicity} means that any measurable invariant subset has measure 0 or 1.  
A stronger form of ergodicity actually holds: $\mu$ is {\em exact}, that is, if $A$ is any measurable subset of
 positive measure, then 
$\mu(f^n(A))\to 1$ as $n\to\infty$. 

The proof of this theorem is too long to be explained in these notes (see \cite{guedj} for a detailed treatment). 
Let us only discuss the convergence statement.
 
\begin{proof}[Proof of the convergence of the $\mu_{n,z}$]
Consider the Fubini-Study $(1,1)$ form $\omega$ associated to the spherical metric on $\pu$. It expresses in coordinates as 
$\omega= \frac{i}{\pi} \fr\overline \fr\log \norm{\sigma}^2$ where $\sigma: \pu\to \cd\setminus\set{0}$ is any local section of the canonical projection
$\cd\setminus\set{0}\to \pu$  and $\norm{\cdot}$ is the standard Hermitian norm on $\cd$, i.e 
$\norm{(z,w)}^2 = \abs{z}^2+\abs{w}^2$. Then 
$$  f^*\omega -  d \omega =   \frac{i}{\pi} \fr\overline \fr\log \frac{\norm{\lrpar{P(z,w), Q(z,w)}}^2}{\norm{(z,w)}^{2d}}  = :
\frac{i}{\pi} \fr\overline \fr g_0,$$
where by homogeneity of the polynomials $P$ and $Q$,  $g_0$ is a globally well-defined smooth function on $\pu$. Thus 
we infer that 
\begin{align*}
\unsur{d^n} (f^n)^*\omega - \omega &=
\sum_{k=0}^{n-1} \lrpar{\unsur{d^{k+1}}  (f^{k+1})^*\omega  - \unsur{d^{k }}
(f^{k })^*\omega } \\
&= \sum_{k=0}^{n-1}\unsur{d^{k+1}}  (f^{k})^* (f^*\omega - d\omega) \\
&=  \frac{i}{\pi} \fr\overline \fr\lrpar{ \sum_{k=0}^{n-1}  \unsur{d^{k+1}} g_0\circ f^k} =  : \frac{i}{\pi} \fr\overline \fr g_k.
\end{align*}
One readily sees that the  sequence of functions $g_k$ converges uniformly to a continuous function $g_\infty$, 
therefore basic distributional calculus implies that 
$$ \unsur{d^n} (f^n)^*\omega \underset{n\to\infty}\longrightarrow \omega + \frac{i}{\pi} \fr\overline \fr g_\infty. $$
Being a limit in the sense of distributions of a sequence of probability measures, 
this last term can be identified to a  positive measure $\mu$ by declaring that 
$$\langle\mu, \varphi\rangle =  \int_{\pu} \varphi \,\omega + \frac{i}{\pi} \int_\pi g_\infty\, \fr\overline \fr \varphi$$ for any smooth function 
$\varphi$. By definition this measure is the equilibrium measure $\mu_f$. 

Now for any $z\in \pu$,  identifying
(1,1) forms and signed measures as above
 we write 
$$ \unsur{d^n} \sum_{w\in f^{-n} (z)} \delta_w  = \unsur{d^n} (f^n)^*\delta_z = \unsur{d^n} (f^n)^*\omega +  \unsur{d^n} (f^n)^*(\delta_z - \omega).$$ There exists a $L^1$ function on $\pu$ such that in the sense of distributions 
$ \frac{i}{\pi} \fr\overline \fr g_z = \delta_z - \omega$, so that  
$$\unsur{d^n} (f^n)^*(\delta_z - \omega)  = \frac{i}{\pi} \fr\overline \fr \lrpar{ \unsur{d^n} g_z \circ f^n} $$
and to establish the convergence of the $\mu_{n,z}$ the problem is to show that for $z\notin E$, $ {d^{-n} } g_z \circ f^n$ converges to 0 in $L^1$. It is rather easy to prove that  this convergence holds for a.e. $z$ with respect to Lebesgue measure. Indeed, suppose that the  $g_z$ are chosen with some uniformity, for instance by assuming $\sup_{\pu}{g_z}  = 0$. In this case 
the average $\int g_z  \mathrm{Leb}(dz)$ is a bounded function $g$ (such that $\frac{i}{\pi} \fr\overline \fr g = \mathrm{Leb}$)
and the result essentially follows from  the Borel-Cantelli Lemma: indeed, on average,  ${d^{-n}} g_z \circ f^n$ is bounded by 
$\cst/d^n$. 

Proving the convergence for every $z\notin E$ requires a finer analysis, see \cite{guedj} for details. 
\end{proof}

 \subsection*{The   case of polynomials}
If $f$ is a polynomial, rather than    the Fubini-Study measure, we can use the Dirac mass at the totally invariant point 
$\infty$, to give another  formulation of  these results. Indeed, if $\nu$ is any probability measure 
(say with compact support) in $\cc$,   write $\nu - \delta_\infty = \frac{i}{\pi} \fr\overline \fr g$ as before. In $\cc$, this   rewrites as 
 $\nu = \Delta g$, where\footnote{For convenience we have swallowed the normalization constant in $\Delta$, so that $\Delta$ is $1/4\pi$ times the ordinary Laplacian.} $g$ is a subharmonic function with logarithmic growth at infinity: 
 $g(z) = \log\abs{z} + c+ o(1)$. If    $\nu_{S^1}$ be the normalized Lebesgue measure  on the unit circle, then 
 $\nu_{S^1}  = \Delta\lrpar{ \log^+\abs{z}}$ (where $\log^+t = \max(\log t, 0)$), so 
 $$\unsur{d^n}(f^n)^* (\nu_{S^1}) =    \unsur{d^n} \Delta\lrpar{ \log^+\abs{f^n(z)}}. $$ An argument similar to that of the proof of Theorem  \ref{thm:equilibrium} yields the following:
 
 \begin{prop}
 If $f$ is a polynomial of degree $d$, 
 the sequence of functions $d^{-n} \log^+ \abs{f^n}$ converges locally uniformly to a subharmonic function 
 $G :\cc\to \re$ satisfying $G  \circ f = d G$. 
 \end{prop}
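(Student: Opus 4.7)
The plan is to mimic the telescoping argument from the proof of Theorem~\ref{thm:equilibrium}, but working directly in the affine chart with $\log^+\abs{\cdot}$ playing the role of the Fubini--Study potential. Write $f(z) = a_d z^d + \text{(lower order)}$ with $a_d\neq 0$, and set $G_n(z) = d^{-n}\log^+\abs{f^n(z)}$; each $G_n$ is continuous and subharmonic since $f^n$ is holomorphic.

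First I would establish a ``large scale'' estimate: there exist $R>1$ and $C>0$ such that for $\abs{z}\geq R$ one has simultaneously $\abs{f(z)}\geq R$ (so $\set{\abs{z}\geq R}$ is forward invariant under $f$) and $\abs{\log\abs{f(z)} - d\log\abs{z}}\leq C$. Granted this, for any $z$ with $\abs{f^n(z)}\geq R$ we have $\abs{f^{n+1}(z)}\geq R\geq 1$, so $\log^+$ coincides with $\log$ in both relevant terms and
$$G_{n+1}(z) - G_n(z) = \unsur{d^{n+1}}\lrpar{\log\abs{f^{n+1}(z)} - d\log\abs{f^n(z)}},$$
which has modulus at most $Cd^{-(n+1)}$. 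A geometric series argument would then yield uniform convergence of $G_n$ on $\set{\abs{z}\geq R}$ to a continuous function.

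Next I would handle the complementary case. If $\abs{f^n(z)}<R$ then $G_n(z)\leq d^{-n}\log R$, and a trivial polynomial bound yields $\abs{f^{n+1}(z)}\leq M$ for some $M=M(R,f)$, so $G_{n+1}(z)\leq d^{-(n+1)}\log^+M$. Both quantities, and hence their difference, are therefore $O(d^{-n})$ uniformly in $z$. Combining the two cases, $(G_n)$ is uniformly Cauchy on $\cc$ and converges (locally) uniformly to a continuous limit $G$. Subharmonicity of $G$ is inherited from the subharmonic $G_n$ by local uniform convergence, and the functional equation $G\circ f = dG$ follows by passing to the limit in the exact identity $G_n\circ f = d\, G_{n+1}$, which is immediate from the definition of $G_n$.

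The only point requiring some care is the joint calibration of $R$ and $C$ so that both estimates hold simultaneously on $\set{\abs{z}\geq R}$; I do not anticipate a genuine obstacle, since this is the one-variable construction of the dynamical Green function, and is genuinely simpler than the spherical version treated in Theorem~\ref{thm:equilibrium}.
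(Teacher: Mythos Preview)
Your proposal is correct and follows precisely the approach the paper indicates, namely the telescoping argument from Theorem~\ref{thm:equilibrium} adapted to the affine chart with $\log^+\abs{\cdot}$ in place of the Fubini--Study potential. Your two-case split (according to whether $\abs{f^n(z)}\geq R$ or not) is simply a concrete way of verifying that the increment $G_{n+1}-G_n = d^{-(n+1)}\lrpar{\log^+\abs{f(w)} - d\log^+\abs{w}}\big|_{w=f^n(z)}$ is uniformly $O(d^{-n})$; equivalently, one could observe directly that $g_0(w):=\log^+\abs{f(w)} - d\log^+\abs{w}$ is bounded on $\cc$, which is the exact analogue of the bounded function $g_0$ in the proof of Theorem~\ref{thm:equilibrium}.
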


 The function $G = G_f$ is by definition the {\em dynamical Green function}\index{Dynamical Green function}\index{Green>dynamicalfunction=(Dynamical --- function)} of $f$.  
  Introduce the {\em filled Julia set}\index{Julia set>filled=( filled ---)}\index{Filled Julia set}
\begin{align*}
K(f) &= \set{z\in \cc, \ (f^n(z)) \text{ is bounded in } \cc} \\
&=  \set{z\in \cc, \ (f^n(z)) \text{ does not tend to }\infty}.
\end{align*} \glossary{$K(f)$: Filled julia set}
  It is easy to show that $J(f)  = \fr K(f)$.
  
   \begin{prop} The dynamical Green function  has the following
  properties:
  \begin{enumerate}[(i)] 
  \item $G_f$ is continuous, non-negative and subharmonic in $\cc$;
  \item $\set{G_f =0} = K(f)$;
  \item $\Delta G_f = \mu_f$ is the equilibrium measure (in particular $\supp(\Delta G_f)= \fr \set{G_f =0}$);
  \item if $f$ is a monic polynomial, then $G_f(z) = \log\abs{z} +o(1)$ as $z\to\infty$. 
 \end{enumerate}
 \end{prop}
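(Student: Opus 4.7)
I plan to deduce all four properties from the locally uniform convergence $d^{-n}\log^+\abs{f^n} \to G_f$ established in the preceding proposition. For (i), each $d^{-n}\log^+\abs{f^n}$ is continuous, non-negative, and subharmonic (as the pointwise maximum of the subharmonic function $\log\abs{f^n}$ with the constant $0$), and all three properties are preserved by locally uniform limits.

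For (ii), I argue pointwise. If $z\in K(f)$, the orbit $(f^n(z))$ is bounded, so $d^{-n}\log^+\abs{f^n(z)} \to 0$ and $G_f(z) = 0$. Conversely, since $d\geq 2$, I can fix $R$ large enough that $\abs{f(w)} \geq \abs{w}^2$ whenever $\abs{w}\geq R$, so any orbit entering $\set{\abs{w}\geq R}$ escapes super-exponentially. If $z\notin K(f)$ and $n_0$ is chosen with $\abs{f^{n_0}(z)}\geq R$, then $\abs{f^{n_0+k}(z)}\geq R^{2^k}$, so $d^{-n}\log\abs{f^n(z)}\geq d^{-n_0}\log R > 0$ for $n$ large, whence $G_f(z) > 0$.

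For (iii), taking the distributional Laplacian commutes with the locally uniform limit, so $\Delta G_f = \lim d^{-n}\Delta \log^+\abs{f^n} = \lim d^{-n} (f^n)^*\nu_{S^1}$, using the identity $\Delta\log^+\abs{z} = \nu_{S^1}$. To identify this limit with $\mu_f$ I would compare to Theorem~\ref{thm:equilibrium}: on $\cc$ one has $\omega - \nu_{S^1} = \Delta u$ for the bounded function $u = \tfrac{1}{2}\log(1+\abs{z}^2)-\log^+\abs{z}$, hence $d^{-n}(f^n)^*(\omega-\nu_{S^1}) = \Delta(d^{-n}\, u\circ f^n)\to 0$ distributionally. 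Thus $\Delta G_f$ coincides with the distributional limit of $d^{-n}(f^n)^*\omega$, which is $\mu_f$. The support statement then reads $\supp(\Delta G_f) = J(f) = \fr K(f) = \fr\set{G_f=0}$, using (ii) and the identification $J(f)=\fr K(f)$ recalled just before the proposition. This is the step I expect to be the most technical, since importing Theorem~\ref{thm:equilibrium} requires a careful bookkeeping between the Fubini-Study and the arc-length normalizations.

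For (iv), write $\log\abs{f(z)} = d\log\abs{z} + \epsilon(z)$ with $\epsilon(z) = \log\abs{1+a_{d-1}z^{-1}+\cdots+a_0 z^{-d}}\to 0$ as $\abs{z}\to\infty$. A telescoping sum gives
$$d^{-n}\log\abs{f^n(z)} - \log\abs{z} = \sum_{k=0}^{n-1} d^{-(k+1)}\, \epsilon(f^k(z)).$$
For $\abs{z}$ large the super-exponential escape of $f^k(z)$ makes $\epsilon(f^k(z))\to 0$ uniformly in $k$ as $\abs{z}\to\infty$, and the series is dominated by a convergent geometric series, so dominated convergence yields $G_f(z) = \log\abs{z}+o(1)$.
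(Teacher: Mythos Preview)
The paper does not supply a proof of this proposition; it is stated and then followed by commentary linking $G_f$ to the classical Green function of $K(f)$. Your argument is therefore the only proof on the table, and parts (i), (iii), and (iv) are correct and in the expected spirit.

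There is, however, a genuine gap in your treatment of (ii). Your escape estimate $\abs{f(w)}\geq \abs{w}^2$ for $\abs{w}\geq R$ yields $\abs{f^{n_0+k}(z)}\geq R^{2^k}$, hence
\[
d^{-(n_0+k)}\log\abs{f^{n_0+k}(z)} \;\geq\; d^{-n_0}\,(2/d)^k\,\log R.
\]
For $d=2$ this is indeed the positive constant $d^{-n_0}\log R$ you claim, but for $d\geq 3$ the factor $(2/d)^k$ tends to zero and you obtain no positive lower bound for $G_f(z)$. The fix is to use the correct growth rate for a degree $d$ polynomial: for $\abs{w}$ large one has, say, $\abs{f(w)}\geq \tfrac12\abs{w}^d$, which gives $\log\abs{f^{n_0+k}(z)}\geq d^k\log\abs{f^{n_0}(z)} - \frac{d^k-1}{d-1}\log 2$ and hence $d^{-n}\log\abs{f^n(z)}\geq d^{-n_0}\bigl(\log R - \tfrac{\log 2}{d-1}\bigr)>0$ for $R$ large enough. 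With this correction the argument goes through.
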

 
Properties {\em (i)-(iii)} show that the dynamically defined function  $G_f$ 
coincides with the Green function of $K(f)$ from classical potential theory. Property {\em (iv)} implies that if $f$ is monic, then 
$K(f)$ is of capacity 1, which is important in view of its number-theoretic properties (see the lectures by P. Autissier in this volume \cite{autissier}). 

Another useful consequence is that the Green function 
$G_f$ is completely determined  by the compact set $K(f)$ (or by its boundary $J(f)$). 
In particular if $f$ and $g$ are polynomials of degree at least 2 then 
\begin{equation}\label{eq:equivalence_julia}
J(f)   = J(g) \Leftrightarrow \mu_f = \mu_g. 
\end{equation}
The same is not true for rational maps: $J(f)$ does not determine $\mu_f$ in general. For instance there are many rational maps 
$f$ such that  $J = \pu(\cc)$, however  
Zdunik \cite{zdunik} proved that  $\mu_f$ is absolutely continuous with respect to  the Lebesgue measure on 
$\pu(\cc)$ (in this case automatically $J(f) = \pu(\cc)$) if and only if $f$ is a Lattès example.

 \section{Non-Archimedean dynamical Green function}

Dynamics of rational functions over non-Archimedean valued fields has been developing rapidly during the past 
20 years, greatly inspired by the analogy with holomorphic dynamics. Most of the results of the previous section have 
analogues in the non-Archimedean setting.  We will not dwell upon the details in these notes, and rather refer the interested 
reader to \cite{silverman, baker_rumely}. 
We shall content ourselves with recalling the vocabulary of valued fields and heights, and the 
construction of the dynamical Green function.

 \subsection*{Vocabulary of valued fields} 
 To fix notation and terminology let us first  recall a few standard notions and facts.
 \begin{defi} 
 An \emph{absolute value} $\abs{\cdot}$ on a field $K$ is a function $K \to \re^+$ such that 
 \begin{itemize}
 \item for every $\alpha \in K$, $\abs{\alpha}\geq 0$ and $\abs{\alpha }=0$ iff $\alpha = 0$;
 \item for all $\alpha$, $\beta$, $\abs{\alpha \beta}  = \abs{\alpha}\cdot \abs{\beta}$;
  \item for all $\alpha$, $\beta$, $\abs{\alpha +  \beta}  \leq  \abs{\alpha} + \abs{\beta}$.
  \end{itemize}
  If in addition $\abs{\cdot}$ satisfies the ultrametric triangle inequality 
 $\abs{\alpha +  \beta}  \leq  \max( \abs{\alpha} , \abs{\beta})$, then it is said non-Archimedean.
 \end{defi}
 
Besides the usual modulus on $\cc$, a basic example is the $p$-adic norm on $\Q$, defined by
 $\abs{\alpha}_p = p^{-v_p(\alpha)}$ where $v_p$ is the $p$-adic valuation:  $v_p(a/b) = \ell$ where 
 $a/b  = p^\ell (a'/b')$ and $p$ does not divide $a'$ nor $b'$.
 On any field we can define the trivial absolute value by $\abs{0} = 0$ and $\abs{x}=1$ if $x\neq 0$. 
 
   If $(K, \abs{\cdot})$ is
  a non-Archimedean valued field, we   define the
 {\em spherical metric}\index{Sherical metric}\index{Metric>spherical=(Spherical ---)} on $\mathbb{P}^1(K)$ by
$$\rho(p_1, p_2) = \frac{\abs{x_1y_2-x_2y_1}}{\max(\abs{x_1} , \abs{y_1})\max(\abs{x_2} , \abs{y_2})},$$
where
$ p_1  =[x_1:y_1] $ and $
 p_2  =[x_2:y_2]$. From the ultrametric inequality we infer that the $\rho$-diameter of   
$\mathbb{P}^1(K)$ equals 1. 

\begin{defi} Two absolute values $\abs{\cdot}_1$ and $\abs{\cdot}_2$ 
on $K$ are said equivalent if there exists a positive real number $r$ such that 
$\abs{\cdot}_1 = \abs{\cdot}_2^r$. 

An equivalence class of absolute values on $K$ is called a \emph{place}. The set of places of $K$ is denoted by 
$\mathcal{M}_K$. 
\end{defi}

\begin{thm}[Ostrowski]\index{Ostrowski}\index{Theorem>ofostrowski=of Ostrowski}
A set of representatives of the set of places of $\Q$ is given by 
\begin{itemize}
\item the trivial absolute value;
\item the usual Archimedean absolute value $\abs{\cdot}_\infty$;
\item the set of $p$-adic absolute values $\abs{\cdot}_p$. 
\end{itemize}
\end{thm}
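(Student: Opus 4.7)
The plan is to classify nontrivial absolute values on $\Q$ by dichotomizing on whether $\abs{\cdot}$ is bounded or unbounded on $\zz$. First I would dispose of the trivial case and then, given a nontrivial $\abs{\cdot}$, distinguish: \emph{(Archimedean case)} there exists an integer $n$ with $\abs{n}>1$, or \emph{(non-Archimedean case)} $\abs{n}\leq 1$ for every $n\in\zz$. The goal is to show that in the Archimedean case $\abs{\cdot}$ is equivalent to $\abs{\cdot}_\infty$, and in the non-Archimedean case it is equivalent to $\abs{\cdot}_p$ for a unique prime $p$.

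For the Archimedean case, I would fix integers $m,n\geq 2$ and expand $m$ in base $n$ as $m=\sum_{i=0}^{r} a_i n^i$ with $0\leq a_i<n$ and $r\leq \log m/\log n$. Using the triangle inequality and the crude bound $\abs{a_i}\leq n$ (each digit is a sum of at most $n$ ones, hence $\abs{a_i}\leq n$), I obtain an estimate of the form $\abs{m}\leq C(n)(1+\log m/\log n)\max(1,\abs{n})^{\log m/\log n}$. Applying this to $m^k$ in place of $m$, taking $k$-th roots and letting $k\to\infty$, the polynomial factor disappears and I get $\abs{m}^{1/\log m}\leq \max(1,\abs{n})^{1/\log n}$. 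Choosing some fixed $n_0$ with $\abs{n_0}>1$ (which forces in particular $\abs{m}>1$ for all $m\geq 2$ by swapping the roles), symmetry yields $\abs{n}^{1/\log n}=\abs{n_0}^{1/\log n_0}=:e^\alpha$ for every integer $n\geq 2$. Setting $r=\alpha$ gives $\abs{n}=n^r=\abs{n}_\infty^r$ on $\zz_{\geq 2}$, hence on $\Q$ by multiplicativity.

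For the non-Archimedean case, I would first upgrade $\abs{n}\leq 1$ for $n\in\zz$ to the ultrametric inequality by expanding $(a+b)^N$ by the binomial formula, bounding $\abs{\binom{N}{k}}\leq 1$, and taking $N$-th roots as $N\to\infty$, yielding $\abs{a+b}\leq \max(\abs{a},\abs{b})$. Since $\abs{\cdot}$ is nontrivial there is a least positive integer $p$ with $\abs{p}<1$; minimality together with multiplicativity forces $p$ to be prime. For any integer $m$ coprime to $p$, Bezout provides $u,v\in\zz$ with $um+vp=1$, and the ultrametric inequality combined with $\abs{u},\abs{v}\leq 1$ and $\abs{p}<1$ gives $1=\abs{um+vp}\leq\max(\abs{m},\abs{p})$, forcing $\abs{m}=1$. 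Writing any nonzero rational as $p^{v_p(x)} x'$ with $x'$ a ratio of integers coprime to $p$, we get $\abs{x}=\abs{p}^{v_p(x)}$; setting $r=-\log\abs{p}/\log p>0$ gives $\abs{\cdot}=\abs{\cdot}_p^r$.

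The main obstacle is the Archimedean case, where one has to extract an actual exponent rather than just an inequality; the key trick is the replacement of $m$ by $m^k$ to wash out the base-$n$ polynomial factor, and the symmetry argument exchanging $m$ and $n_0$ to obtain equality of the two sides. The non-Archimedean case, by contrast, is essentially combinatorial once the ultrametric inequality has been derived from the binomial expansion. Finally, the three classes of representatives are mutually inequivalent (the trivial absolute value is isolated, $\abs{\cdot}_\infty$ is unbounded on $\zz$, and distinct primes $p\neq q$ give inequivalent absolute values since $\abs{q}_p=1$ while $\abs{q}_q<1$), which completes the classification.
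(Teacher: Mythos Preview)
The paper does not actually prove Ostrowski's theorem; it is stated as a background fact and the text moves directly on to the product formula. So there is no proof in the paper to compare your proposal against.

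That said, your argument is the standard classical proof and is correct in all essentials. The Archimedean half is handled by the usual base-$n$ expansion together with the $m\mapsto m^k$ amplification trick to kill the polynomial factor, and the symmetry step to upgrade the inequality $\abs{m}^{1/\log m}\leq \abs{n}^{1/\log n}$ to an equality is exactly the right idea. In the non-Archimedean half, deriving the ultrametric inequality from the binomial expansion and $\abs{\binom{N}{k}}\leq 1$, then using minimality of $p$ and B\'ezout to force $\abs{m}=1$ for $m$ coprime to $p$, is again the textbook route. The final inequivalence remarks are fine. There is nothing to correct.
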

 
 Decomposition into prime factors  yields the {\em product formula} 
$$\forall x\in \Q^*, \   \prod_{p \in \mathcal{P}\cup\set{\infty}}\abs{x}_p =1,$$
where $\mathcal{P}$ denotes the set of prime numbers.

For number fields, places can also be described. First one considers $\Q_p$ the completion of 
$\Q$ relative to the distance induced by $\abs{\cdot}_p$. The $p$-adic absolute value extends to  
an algebraic closure of $\Q_p$,   so
we may consider $\cc_p$ the completion of this algebraic closure. (For $p= \infty$ of 
course $\cc_p = \cc$.) 

A number field $K$ 
admits a number of embeddings $\sigma:K\hookrightarrow \cc_p$, which define by restriction a family of absolute values
$a\mapsto \abs{\sigma(a)}_p$ on $K$. Distinct embeddings may induce the same absolute value. For any non-trivial
absolute value 
$\abs{\cdot}_v$ on $K$ there exists $p_v\in \mathcal{P}\cup\set{\infty}$ 
such that $\abs{\cdot}_v\rest{\Q}$ is equivalent to 
$\abs{\cdot}_{p_v}$ and $\e_v$ distinct embeddings $\sigma:K \hookrightarrow \cc_{p_v}$ such that 
the restriction of $\abs{\cdot}_{\cc_{p_v}}$ to ${\sigma(K)}$  induces $ \abs{\cdot}_v$ up to equivalence. 
Thus for each place of $K$  we can define an  absolute value $\abs{\cdot}_v$ on $K$ that is 
induced by $\abs{\cdot}_{\cc_{p_v}}$ by embedding $K$ in $\cc_{p_v}$ and 
  the following product formula    holds
  \begin{equation}
  \label{eq:product formula}
  \forall x\in K^*, \ \prod_{v\in \mathcal{M}_K} \abs{x}_v^{\e_v} 
 = \prod_{p\in \mathcal{P}\cup\set{\infty}}\prod_{\sigma: K \hookrightarrow \cc_p}\abs{\sigma(x)}_p=1.
 \end{equation}
A similar formalism holds for certain non-algebraic extensions of $\Q$, giving rise to a general notion of 
{\em product formula field}.

 \subsection*{Dynamical Green function} Let $(K, \abs{\cdot})$ be a complete valued field. 
 If $f$ is a rational map on $\pp^1(K)$ we can define a dynamical Green function   as before, by working 
 with homogeneous lifts to $\aa^2(K)$.  For $(x,y) \in \aa^2(K)$ we put 
 $\norm{(x,y)} = \max(\abs{x} , \abs{y})$, and fix a lift $F:  \aa^2(K)\to \aa^2(K)$ of $f$. 
 
 \begin{thm}\label{thm:green NA}
For every $(x,y)\in \aa^2(K)$, the limit $$G_F(x,y)  := \lim_{n\to \infty} \unsur{d^n} \log \norm{F^n(x,y)}$$ exists.
 
  The function $G_F$ is continuous on $\aa^2(K)$ (relative to the norm topology) and satisfies 
  \begin{itemize}
  \item $G_F\circ F = dG_F$;
  \item $G_F(x,y)  = \log{\norm{(x,y)}}  + O(1)$ at infinity;
  \item $G_F(\lambda x,\lambda y) = \log\abs{\lambda} +G_F(x,y)$.
  \end{itemize}
 \end{thm}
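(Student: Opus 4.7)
The plan is to mirror the classical Archimedean argument; everything hinges on the global estimate
\[ \abs{\log\norm{F(x,y)} - d\log\norm{(x,y)}} \leq C_0 \quad \text{on } \aa^2(K)\setminus\set{0}. \]
Writing $F = (P,Q)$ with $P,Q$ homogeneous of degree $d$, the fact that $f$ is a well-defined rational map of degree $d$ on $\pu$ means exactly that $P$ and $Q$ have no common zero in $\overline{K}^{\,2}\setminus\set{0}$. The upper bound $\norm{F(x,y)} \leq C \norm{(x,y)}^d$ is immediate from the (ultrametric or ordinary) triangle inequality applied to each monomial in $P$ and $Q$. For the matching lower bound I would appeal to the homogeneous Nullstellensatz over $\overline{K}$: there exist an integer $N \geq d$ and homogeneous polynomials $A_i, B_i$ of degree $N-d$ with
\[ A_1 P + B_1 Q = x^N, \qquad A_2 P + B_2 Q = y^N. \]
Estimating both sides valuatively gives $\norm{(x,y)}^N \leq C' \norm{F(x,y)} \cdot \norm{(x,y)}^{N-d}$, whence $\norm{F(x,y)} \geq c\norm{(x,y)}^d$ for some $c > 0$.

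With this two-sided estimate in hand, set $g_n(x,y) = d^{-n} \log\norm{F^n(x,y)}$. Then
\[ g_{n+1}(x,y) - g_n(x,y) = \unsur{d^{n+1}}\lrpar{\log\norm{F(F^n(x,y))} - d\log\norm{F^n(x,y)}}, \]
and the bracketed quantity is uniformly bounded by $C_0$ (using that $F^n(x,y) \neq 0$ whenever $(x,y) \neq 0$, itself a consequence of the lower bound). Thus $(g_n)$ is a uniformly Cauchy sequence on $\aa^2(K)\setminus\set{0}$, so it converges uniformly to a continuous function $G_F$ there. Telescoping also yields $\abs{G_F(x,y) - \log\norm{(x,y)}} \leq C_0/(d-1)$ throughout $\aa^2(K)\setminus\set{0}$, which simultaneously establishes property $(ii)$ and shows that $G_F(x,y) \to -\infty$ as $(x,y) \to 0$, so $G_F$ extends continuously to all of $\aa^2(K)$ by declaring $G_F(0,0) = -\infty$.

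The two remaining properties then follow formally. The functional equation $G_F \circ F = d\, G_F$ is obtained by observing $d\, g_{n+1}(x,y) = g_n(F(x,y))$ and passing to the limit. The homogeneity relation $G_F(\lambda x,\lambda y) = \log\abs{\lambda} + G_F(x,y)$ follows from $F^n(\lambda x,\lambda y) = \lambda^{d^n} F^n(x,y)$, a direct iteration of the homogeneity of $F$, upon dividing by $d^n$ and letting $n\to\infty$.

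The main obstacle, as in the Archimedean setting, is the lower bound $\norm{F(x,y)} \geq c\norm{(x,y)}^d$, which requires an effective form of the Nullstellensatz. Some mild care is needed to move between $K$ and its algebraic closure (or the completion $\cc_v$): the coefficients of $A_i, B_i$ lie a priori in a finite extension of $K$, but a non-Archimedean absolute value extends uniquely to such extensions, so the bound descends to $K$. This step is the non-Archimedean analogue of the non-vanishing of the resultant $\mathrm{Res}(P,Q)$, whose valuation in fact controls the optimal constant $c$.
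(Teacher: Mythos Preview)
Your proposal is correct and follows precisely the approach sketched in the paper: the paper's proof consists entirely of the two-sided estimate $c_1 \leq \norm{F(x,y)}/\norm{(x,y)}^d \leq c_2$, with the upper bound called ``obvious'' and the lower bound attributed to the Nullstellensatz, exactly as you do. Your write-up is in fact more detailed than the paper's sketch, spelling out the telescoping Cauchy argument and the formal verification of the three listed properties.
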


Note that the function $G_F$ depends on the chosen lift, more precisely $G_{\alpha F}  = \unsur{d-1} \log\abs{\alpha}+ G_F$.
The proof of the theorem is similar to that of the Archimedean case. The key point is  that there exists positive numbers 
$c_1$ and $c_2$ such that  the inequality 
\begin{equation}\label{eq:norm lift}
c_1 \leq \frac{\norm{F(x,y)}}{\norm{(x,y)}^d} \leq c_2
\end{equation}
 holds. The upper bound in \eqref{eq:norm lift} is obvious, and the lower bound is a consequence of the Nullstellensatz.

 What is more delicate is to make sense of  the measure theoretic part, i.e. to give a meaning to $\Delta G$. For this one 
 has to introduce  the formalism of 
  Berkovich spaces, which is  well-suited to  measure theory (see \cite{silverman, baker_rumely} for details).  
  
  If $f$ is a polynomial, instead of lifting to $\aa^2$, we can directly consider the Green function on $K$ as in the complex case. More precisely: if $f$ is a polynomial of degree $d$, 
 the sequence of functions $d^{-n} \log^+ \abs{f^n}$ converges locally uniformly to a  function 
 $G_f$ on $K$    satisfying $G_f  \circ f = d G_f$.

 \section{Logarithmic height} 
 We refer to \cite{acl2, silverman} for details and references on the material in this section. 
 For $x\in \pp^1(\Q)$ we can write $x= [a:b]$ with $(a,b)\in \zz^2$ and $\mathrm{gcd}(a,b) = 1$, we define the {\em naive  height}\index{Naive=Na\"ive>height}\index{Height>naive=(Na\"ive ---)} $h(x)   = h_\mathrm{naive}(x)
   = \log\max(\abs{a}_\infty, \abs{b}_\infty)$. A fundamental (obvious) property is that for every $M\geq 0$
   $$\set{x \in \pp^1(\Q), h (x)\leq M}$$ is finite. 
   
 We now explain how to extend $h$ to $\pp^1(\overline \Q)$ using 
   the formalism of \emph{places}. The key observation (left as an exercise to the reader) is that   the 
   product formula implies that 
   $$   h(x) = \sum_{p\in \mathcal{P}\cup\set{\infty}} \log^+\abs{x}_p \ . $$  
      This formula can   be generalized to number fields, giving rise to a notion of {\em logarithmic height}\index{Logarithmitic height}\index{Height>logarithmic=(Logarithmic ---)} in that setting. 
   Let $K$ be a number field and $x = [x_0, x_1]\in \pp^1(K)$. Then with notation as above we define 
\begin{align*}
h(x)  &= \unsur{[K:\Q]} 
 \sum_{p\in \mathcal{P}\cup\set{\infty}}\sum_{\sigma: K \hookrightarrow \cc_p} 
 \log\norm{(\sigma(x_0) ,  \sigma(x_1)) }_p  \\
&=\unsur{[K:\Q]} 
 \sum_{v\in \mathcal{M}_K} \e_v  \log\norm{(\sigma(x_0) ,  \sigma(x_1)) }_v,
 \end{align*}
 where as before $\norm{(x_0,x_1)}_v = \max(\abs{ x_0}_v, \abs{ x_1}_v )$.Note  that 
  by the product formula, $h(x)$ does not depend on the lift $(x_0,x_1)\in \aa^2$. 
  By  construction, $h$ is invariant under the action of the Galois group $\mathrm{Gal}(K:\Q)$. 
  Furthermore, 
  the normalization in the definition of $h$ was chosen in such a way that  
  $h(x)$ does not depend on the choice of the number field containing $x$, therefore $h$ is a well-defined function on 
$\pp^1(\overline \Q)$ invariant under the absolute Galois group of $\Q$. 

 If $x\in K$ we can write $x=[x:1] $ and we recover 
 $$\log \norm{(x,1)}_p = \log\max(\abs{ x_0}_p, \abs{  x_1 }_p ) =  \log^+\abs{x}_p\;.$$  
 \begin{thm}[Northcott]\index{Northcott}\index{Theorem>ofnorthcott=of Northcott}
 For all $M>0$ and $D\leq 1$, the set 
 $$\set{x \in \pp^1(\overline \Q), h (x)\leq M \text{ and } [\Q(x):\Q]\leq D}$$ is finite. 
 \end{thm}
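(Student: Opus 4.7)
The plan is to reduce the statement to the corresponding finiteness statement over $\Q$ (which is immediate from the definition of the naive height) by passing from $x\in\overline\Q$ to its minimal polynomial over $\Q$. The crux is to show that the height $h(x)$ controls the heights of the coefficients of that minimal polynomial, uniformly in $x$.

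First I would fix $x\in\overline\Q$ with $d:=[\Q(x):\Q]\leq D$ and $h(x)\leq M$, denote by $L$ the Galois closure of $\Q(x)$ over $\Q$, and consider the Galois conjugates $x=x_1,\ldots,x_d\in L$ together with the minimal polynomial
$$P(T)=\prod_{i=1}^d(T-x_i)=T^d+\sum_{k=1}^d(-1)^k s_k\,T^{d-k}\in\Q[T],$$
where $s_k$ is the $k$-th elementary symmetric function in the conjugates. The very construction of $h$ using \eqref{eq:product formula} is manifestly invariant under $\mathrm{Gal}(\overline\Q/\Q)$, so $h(x_i)=h(x)\leq M$ for every $i$.

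The key step is to bound $h(s_k)$ in terms of the heights of the $x_i$. For every place $w$ of $L$, expanding $s_k$ as a sum of $\binom{d}{k}$ products of $k$ distinct $x_i$ and using the triangle inequality together with $\prod_{i\in I}|x_i|_w\leq\prod_{i=1}^d\max(|x_i|_w,1)$, I would obtain
$$\log^+|s_k|_w\leq \delta_w\log\binom{d}{k}+\sum_{i=1}^d\log^+|x_i|_w,$$
where $\delta_w=1$ at Archimedean places and $0$ otherwise (the combinatorial factor disappearing at non-Archimedean places by ultrametricity). Summing this inequality weighted by $\e_w/[L:\Q]$ and using that the Archimedean weights $\e_w$ add up to $[L:\Q]$, one gets
$$h(s_k)\leq\sum_{i=1}^d h(x_i)+\log\binom{d}{k}\leq DM+D\log 2.$$

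Since each $s_k$ lies in $\Q$ and has height bounded by a constant depending only on $M$ and $D$, the Northcott property over $\Q$ (observed right before the theorem) leaves only finitely many possible $s_k$. As there are only finitely many integers $d\leq D$, this yields finitely many candidate polynomials $P$, each of which has at most $D$ roots, and the conclusion follows. The main technical content of the argument is the inequality in the previous paragraph; once the place-by-place estimate is in place, the bookkeeping with local multiplicities $\e_w$ follows directly from the definition of $h$, and no deeper ingredient is needed.
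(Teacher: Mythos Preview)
The paper does not supply a proof of Northcott's theorem; it is stated as a classical fact and then used to derive Corollary~\ref{cor:northcott}. Your argument is the standard one and is correct: reduce to the case of $\Q$ by passing to the minimal polynomial, and control the naive heights of its coefficients via the elementary symmetric functions of the Galois conjugates. The place-by-place estimate
\[
\log^+|s_k|_w\leq \delta_w\log\binom{d}{k}+\sum_{i=1}^d\log^+|x_i|_w
\]
is exactly right (the combinatorial factor survives only at Archimedean places), and summing with weights $\e_w/[L:\Q]$ together with $\sum_{w\mid\infty}\e_w=[L:\Q]$ gives $h(s_k)\leq dM+\log\binom{d}{k}\leq DM+D\log 2$. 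Two very minor remarks: you silently restrict to affine $x\in\overline\Q$, which is harmless since $\infty=[1:0]$ is a single point; and you use that $h$ is independent of the ambient number field used to compute it, which is implicit in the paper's construction but worth flagging. Otherwise nothing is missing.
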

 
 \begin{cor}\label{cor:northcott}
For $x\in \overline \Q^*$,  $h(x)  = 0$ if and only if $x$ is a root of unity.
 \end{cor}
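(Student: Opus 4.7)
The plan is to derive the corollary from Northcott's theorem applied to the sequence of powers $x, x^2, x^3, \ldots$, by exploiting the multiplicativity of the height. The key preliminary observation is that for any $n\geq 1$ and any $x \in \overline{\mathbb{Q}}^*$,
\begin{equation*}
h(x^n) = n\, h(x).
\end{equation*}
This follows directly from the definition: writing $x = [x:1]$ with $x \in K$ where $K = \mathbb{Q}(x)$, we have $\log \norm{(x^n, 1)}_v = \log\max(\abs{x}_v^n, 1) = n \log^+ \abs{x}_v$ for every place $v$ of $K$, and summing with the weights $\e_v/[K:\Q]$ gives the identity. This step is routine and should not be the obstacle.

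For the ``if'' direction, suppose $x$ is a root of unity, so $x^n = 1$ for some $n\geq 1$. Then $h(x^n) = h(1) = 0$, since $\log^+\abs{1}_v = 0$ at every place. By the multiplicativity above, $n\, h(x) = 0$, hence $h(x) = 0$.

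For the ``only if'' direction, assume $h(x) = 0$ and set $D = [\mathbb{Q}(x):\mathbb{Q}]$. For every $n \geq 1$, the element $x^n$ belongs to $\mathbb{Q}(x)$, so $[\mathbb{Q}(x^n):\mathbb{Q}] \leq D$; by the preliminary observation, $h(x^n) = 0$. Northcott's theorem (applied with bound $M = 0$ and degree bound $D$) then ensures that the set $\{x^n : n \geq 1\}$ is contained in a finite set. Consequently there exist integers $m > n \geq 1$ with $x^m = x^n$, i.e.\ $x^{m-n} = 1$, so $x$ is a root of unity.

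The only conceivable difficulty is checking carefully that the height defined through embeddings into the various $\cc_p$ really does satisfy $h(x^n) = nh(x)$ (as opposed to being merely additive up to a bounded error); but since the local contributions $\log^+ \abs{\cdot}_v$ are strictly multiplicative under taking powers, the identity is exact and no error term appears. The rest of the argument is a direct application of Northcott.
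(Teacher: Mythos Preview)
Your proof is correct and follows essentially the same approach as the paper: use the exact multiplicativity $h(x^n)=nh(x)$ together with $h(1)=0$ to handle the ``if'' direction, and apply Northcott's theorem to the powers $\{x^n : n\ge 1\}$ (which have bounded degree and height zero) to force a collision $x^m=x^n$ for the converse. The paper's argument is terser but identical in substance; your added justification of the degree bound $[\Q(x^n):\Q]\le D$ and of the exact multiplicativity is a welcome clarification.
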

\begin{proof} Indeed, note that by construction $h(x^N) = Nh(x)$ and also $h(1) = 0$.  In particular if $x$ is a root of unity, then $h(x) = 0$. Conversely if $x\neq 0$ and $h(x) = 0$ then by Northcott's Theorem, $\set{x^N, \ N\geq 1}$ is finite so 
there exists $M<N$ such that $x^M=x^N$ and we are done. 
\end{proof}

 \subsection*{Action under rational maps} Let now $f:\pp^1(\overline \Q)\to\pp^1(\overline \Q) $ 
 be a rational map defined on $\overline \Q$,  of degree $d$. 
 
 \begin{prop}[Northcott] \label{prop:northcott}
 There exists a constant $C$ depending only on $f$ such that 
 $$\forall x\in \pp^1(\overline \Q), \ \abs{h(f(x)) - dh(x)}\leq C.$$
 \end{prop}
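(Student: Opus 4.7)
The plan is to choose a homogeneous lift of $f$ with coefficients in a fixed number field and then to apply the local estimate \eqref{eq:norm lift} simultaneously at every place, summing over $\mathcal{M}_K$ via the definition of the height. The crucial observation will be that the ``local constants'' $c_{1,v}, c_{2,v}$ in \eqref{eq:norm lift} can be taken equal to $1$ at all but finitely many places, so that their logarithms form a finite sum bounded only in terms of $f$.

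More precisely, fix a number field $K_0$ containing the coefficients of $f$, and a homogeneous lift $F = (P,Q) : \aa^2 \to \aa^2$ with $P, Q \in K_0[x,y]$ homogeneous of degree $d$ and no common factor. Given $x = [x_0:x_1]\in \pp^1(\overline\Q)$, enlarge $K_0$ into a number field $K$ containing $x_0, x_1$. By definition of the height (and the product formula, which kills the choice of lift),
\begin{align*}
h(f(x)) &= \unsur{[K:\Q]}\sum_{v\in\mathcal{M}_K} \e_v \log\norm{F(x_0,x_1)}_v, \\
d\,h(x) &= \unsur{[K:\Q]}\sum_{v\in\mathcal{M}_K} \e_v \log\norm{(x_0,x_1)}_v^{d}.
\end{align*}
Applying \eqref{eq:norm lift} at each place $v$ (which the excerpt has established as a consequence of the Nullstellensatz) yields constants $c_{1,v}, c_{2,v}>0$ with
$$c_{1,v} \leq \frac{\norm{F(x_0,x_1)}_v}{\norm{(x_0,x_1)}_v^{d}} \leq c_{2,v},$$
and subtracting the two displayed expressions gives
$$\bigl| h(f(x)) - d\,h(x)\bigr| \leq \unsur{[K:\Q]} \sum_{v\in \mathcal{M}_K} \e_v \max\bigl(\abs{\log c_{1,v}},\abs{\log c_{2,v}}\bigr).$$

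The main step is now to show that this sum is actually finite and depends only on $f$, not on $x$ or on $K$. For the upper bound: at any non-Archimedean place $v$ at which the coefficients of $P$ and $Q$ are $v$-integral (which is all but finitely many $v$) the ultrametric inequality directly gives $\norm{F(x_0,x_1)}_v \leq \norm{(x_0,x_1)}_v^d$, so we may take $c_{2,v}=1$. For the lower bound: since $P$ and $Q$ have no common factor, their resultant $R = \mathrm{Res}(P,Q)\in K_0^{*}$ is a nonzero element, and a standard B\'ezout-type identity expresses suitable powers of $x$ and $y$ as $K_0$-linear combinations of $P$ and $Q$. This lets one take $c_{1,v}=1$ at every non-Archimedean place at which the coefficients appearing in this identity and $R$ itself are $v$-units --- again all but finitely many $v$. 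The finitely many remaining places (including the Archimedean ones) contribute a bounded total that depends only on $P,Q$, i.e.\ only on $f$. The compatibility of heights with field extensions ensures that the bound does not depend on the particular $K$ chosen to accommodate $x$, so we obtain the desired uniform constant $C=C(f)$.

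The main obstacle is precisely this ``almost all places are good'' argument: the upper bound is essentially trivial away from finitely many places, but the lower bound requires the quantitative form of the Nullstellensatz (through the resultant) to ensure that $c_{1,v}=1$ outside a finite set of places determined by $f$ alone. Once this finiteness is secured, the rest is a direct summation.
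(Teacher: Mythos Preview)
Your proposal is correct and follows exactly the approach the paper sketches: decompose $h$ into local contributions, apply the inequality \eqref{eq:norm lift} place by place, and observe that the local constants are trivial at all but finitely many places (the paper's parenthetical remark). You have simply fleshed out the details---notably the resultant/Nullstellensatz argument for the lower bound and the independence of the bound from the auxiliary field $K$---that the paper leaves implicit in its one-line proof.
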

 
 In other words, $h$ is almost multiplicative. The proof follows from the decomposition of $h$ as a sum of 
 local contributions together with the Nullstellensatz as in  \eqref{eq:norm lift}
  (note that $h(f(x))  = h(x)$ for all but finitely many places).  
 
  We are now in position to define the {\em canonical height}\index{Canonical>height}\index{Height>canonical=(Canonical ---)} associated to $f$. 
 
 \begin{thm}[Call-Silverman]\label{thm:call silverman}
 There exists a unique function $h_f: \pp^1(\overline \Q) \to \re_+$ such that: 
 \begin{itemize}
 \item $h  - h_f$ is bounded;
 \item $h_f\circ f = d\, h_f$. 
 \end{itemize}
 \end{thm}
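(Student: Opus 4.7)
The plan is the standard Tate telescoping construction. I would \emph{define} the candidate canonical height by
$$ h_f(x) := \lim_{n\to\infty} \frac{h(f^n(x))}{d^n}, $$
and then verify existence of the limit, the two required properties, and finally uniqueness.

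\textbf{Existence of the limit.} Fix $x\in\pp^1(\overline\Q)$ and set $a_n = d^{-n} h(f^n(x))$. Applying Proposition \ref{prop:northcott} to the point $f^n(x)$ gives
$$ \abs{a_{n+1} - a_n} \;=\; d^{-(n+1)}\,\abs{h(f(f^n(x))) - d\, h(f^n(x))} \;\leq\; \frac{C}{d^{n+1}}. $$
Since $\sum_n d^{-(n+1)}$ is a convergent geometric series (here we use $d\geq 2$), the sequence $(a_n)$ is Cauchy, hence converges in $\re$. Summing the telescoping estimate from $n=0$ to infinity also yields the a priori bound
$$ \abs{h_f(x) - h(x)} \;\leq\; \frac{C}{d-1}, $$
so $h - h_f$ is bounded on all of $\pp^1(\overline\Q)$, uniformly in $x$.

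\textbf{Functional equation.} Directly from the definition,
$$ h_f(f(x)) = \lim_{n\to\infty} \frac{h(f^{n+1}(x))}{d^n} = d\cdot \lim_{n\to\infty} \frac{h(f^{n+1}(x))}{d^{n+1}} = d\,h_f(x). $$

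\textbf{Uniqueness.} Suppose $g:\pp^1(\overline\Q)\to \re$ also satisfies $g - h$ bounded and $g\circ f = d\,g$. Then the difference $\varphi := h_f - g$ is bounded on $\pp^1(\overline\Q)$ and satisfies $\varphi\circ f = d\,\varphi$. Iterating gives $\varphi(x) = d^{-n}\varphi(f^n(x))$, so $\abs{\varphi(x)}\leq d^{-n}\norm{\varphi}_\infty \to 0$ as $n\to\infty$, whence $\varphi\equiv 0$.

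There is no real obstacle here; the one place where care is needed is making sure that the constant $C$ from Proposition \ref{prop:northcott} is genuinely uniform in $x$ (it depends only on $f$), since this uniformity is what allows the Cauchy estimate to hold simultaneously at every point and thus forces $h - h_f$ to be globally bounded rather than merely bounded pointwise. Apart from that, the construction is a direct application of the contraction-mapping idea on the space of functions differing from $h$ by a bounded function, with contraction ratio $1/d$.
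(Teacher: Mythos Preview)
Your proof is correct and follows exactly the same approach as the paper: define $h_n = d^{-n} h\circ f^n$, use Proposition~\ref{prop:northcott} to get the Cauchy estimate $\abs{h_{n+1}-h_n}\leq C/d^n$, and pass to the limit. You have simply spelled out the details (functional equation, uniqueness) that the paper leaves as ``obvious.''
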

 
 Note that the naive height $h$ is the canonical height associated to $x\mapsto x^d$. 
 
 \begin{proof} This is similar  to the construction of the dynamical Green function. For every $n\geq 0$, let 
 $h_n = d^{-n} h\circ f^n$. By Proposition \ref{prop:northcott} we have that $\abs{h_{n+1} - h_n }\leq C/d^n$, 
 so the sequence $h_n$ converges pointwise. We define $h_f$ to be its limit, and the announced properties 
 are obvious. 
 \end{proof}
 
 \begin{prop} 
 The canonical height  $h_f$ enjoys the following properties:
 \begin{itemize}
 \item $h_f\geq 0$;
 \item for $x\in K^*$,  $h_f(x)  = 0$ if and only if $x$ is preperiodic, i.e. there exists $k<l$ such that $f^k(x) = f^l(x)$. 
 \item the set $\displaystyle \set{x \in \pp^1(\Q), h (x)\leq M \text{ and } [\Q(x):\Q]\leq D}$ is finite. 
 \end{itemize}
 \end{prop}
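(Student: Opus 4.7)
The plan is to deduce all three properties from the defining properties of $h_f$ (namely $h_f \circ f = d h_f$ and $h - h_f$ bounded) together with the non-negativity of the naive height and Northcott's theorem.

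For non-negativity, I would go back to the construction: the approximants $h_n = d^{-n} h \circ f^n$ are non-negative since $h$ itself is non-negative (being a sum over places of $\log^+$-terms, each of which is $\geq 0$). The pointwise limit of a non-negative sequence is non-negative, so $h_f \geq 0$.

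For the characterization of the locus $\{h_f = 0\}$, the forward direction is easy: if $x$ is preperiodic, the orbit $\{f^n(x) : n \geq 0\}$ is a finite set, so $h(f^n(x))$ is bounded, hence $h_f(x) = \lim d^{-n} h(f^n(x)) = 0$. For the converse, suppose $h_f(x) = 0$. Then using $h_f \circ f = d \, h_f$ we get $h_f(f^n(x)) = 0$ for all $n$, and boundedness of $h - h_f$ gives that $h(f^n(x)) \leq C$ for some constant $C$ independent of $n$. Now the whole forward orbit is contained in $\mathbb{P}^1(K)$ where $K = \mathbb{Q}(x)$ (since $f$ is defined over $\overline{\mathbb{Q}}$, say over some number field $L$, we can work in the compositum $L(x)$, which has degree bounded independently of $n$). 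By Northcott's theorem, the set of points of $\mathbb{P}^1(\overline{\mathbb{Q}})$ of bounded degree and bounded naive height is finite, so the orbit $\{f^n(x)\}$ is finite, which means $x$ is preperiodic. This is the step I expect to be the main obstacle in the sense that one has to be careful about the field of definition of the iterates; everything else is essentially formal.

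For the finiteness statement, the condition $h_f(x) \leq M$ combined with $|h - h_f| \leq C'$ (the boundedness provided by Theorem \ref{thm:call silverman}) implies $h(x) \leq M + C'$, so the set in question is contained in a set to which Northcott's theorem applies directly, and is therefore finite. (If instead the statement is read with $h_f$ in place of $h$, the argument is identical; if it is read with $h$, it is simply a restatement of Northcott's theorem.)
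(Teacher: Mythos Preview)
Your proposal is correct and follows essentially the same approach as the paper: non-negativity from the limit of non-negative approximants, the preperiodicity characterization via the functional equation $h_f\circ f = d\,h_f$ combined with Northcott (exactly parallel to the paper's Corollary on roots of unity), and the finiteness from $h_f - h = O(1)$. Your attention to the field of definition of the iterates is a detail the paper glosses over, but the argument is the same.
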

 
 \begin{proof}
 The first item follows from the positivity of $h$ and 
  the formula $h_f(x ) = \lim_n d^{-n} h(f^n(x))$. The second one is proved exactly as Corollary
 \ref{cor:northcott}, and the last one follows from the estimate $h_f - h = O(1)$. 
 \end{proof}

 It is often useful to know that the logarithmic height expresses as a sum of local contributions coming 
 from the   places of $K$. For the canonical height, the existence of such a decomposition 
 does not clearly follow from 
 the Call-Silverman definition. 
On the other hand the construction of the dynamical Green function from the previous paragraph implies that 
 a similar description holds for $h_f$. 
 
 \begin{prop}\label{prop:h_places}
 Let $f$ be a rational map on $\pu(\overline \Q)$ and $F$ be a homogeneous lift of $f$ on $\aa^2(\overline \Q)$. 
 Then for every $x\in \pu(\overline \Q)$, 
 we have that 
 $$h_f(x)  = \unsur{[K:\Q]} 
 \sum_{v\in \mathcal{M}_K} \e_v  G_{F, v}(x_0, x_1), \text{  where } x=[x_0:x_1]$$ where $K$ is any number field such that 
 $x\in \pu(K)$, $ G_{F, v}$ is the dynamical Green function associated to $(K, \abs{\cdot}_v)$ and $\e_v$ is as in \eqref{eq:product formula}.  
  \end{prop}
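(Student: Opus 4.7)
The plan is to define the putative height
\[
\widetilde{h}_f(x) := \frac{1}{[K:\Q]} \sum_{v\in \mathcal{M}_K} \e_v\, G_{F,v}(x_0,x_1)
\]
for an arbitrary lift $(x_0,x_1)\in \aa^2(K)\setminus\{0\}$ of $x$, then verify that $\widetilde h_f$ satisfies the two characterizing properties of Theorem \ref{thm:call silverman}, and conclude by uniqueness that $\widetilde h_f = h_f$.

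First I would check that $\widetilde h_f(x)$ is well defined, i.e. independent of the chosen lift and of the field $K$ containing the coordinates of $x$. Scaling $(x_0,x_1)$ by $\lambda\in K^*$ changes each local term by $\log\abs{\lambda}_v$ thanks to the homogeneity property $G_{F,v}(\lambda x_0,\lambda x_1) = \log\abs{\lambda}_v + G_{F,v}(x_0,x_1)$ in Theorem \ref{thm:green NA}, so the total variation is $\frac{1}{[K:\Q]}\sum_v \e_v \log\abs{\lambda}_v$, which vanishes by the product formula \eqref{eq:product formula}. Independence from $K$ follows from the standard compatibility of the weights $\e_v$ under finite extensions $K\subset K'$: if $w\in \mathcal{M}_{K'}$ lies above $v\in \mathcal{M}_K$ then $\sum_{w\mid v} \e_w = [K':K]\,\e_v$, and the local Green functions are preserved under embeddings.

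Next, the functional equation $\widetilde h_f\circ f = d\,\widetilde h_f$ is immediate: for any lift, $G_{F,v}\circ F = d\, G_{F,v}$ place by place. The main step is then the boundedness of $h - \widetilde h_f$. Recalling that
\[
h(x) = \frac{1}{[K:\Q]}\sum_{v\in \mathcal{M}_K} \e_v \log\norm{(x_0,x_1)}_v,
\]
it suffices to bound $G_{F,v}(x_0,x_1) - \log\norm{(x_0,x_1)}_v$ by a constant $C_v$ such that $\sum_v \e_v C_v < \infty$. For a fixed $v$, iterating the inequality \eqref{eq:norm lift} and summing the geometric series gives $\bigl|G_{F,v}(x_0,x_1)-\log\norm{(x_0,x_1)}_v\bigr| \leq \frac{\max(\abs{\log c_{1,v}},\abs{\log c_{2,v}})}{d-1}$, uniformly in $(x_0,x_1)$. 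The crucial point is that for all but finitely many non-Archimedean $v$, the coefficients of $F$ are $v$-integral and the homogeneous resultant of $F$ is a $v$-unit; at such places of ``good reduction'' one has $\norm{F(x,y)}_v = \norm{(x,y)}_v^d$ exactly, so $c_{1,v}=c_{2,v}=1$ and $C_v = 0$. Only finitely many places contribute, giving a global bound $\abs{h - \widetilde h_f} \leq C$.

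The main obstacle is precisely this last point---establishing that the constants $C_v$ in the local estimate $G_{F,v} - \log\norm{\cdot}_v = O(1)$ vanish for almost every place. This is a concrete arithmetic input (Nullstellensatz over $\overline\Q$ combined with the finiteness of primes dividing the resultant of the lift) beyond what is recorded in Theorem \ref{thm:green NA}, and it is what makes the infinite sum over $\mathcal{M}_K$ actually reduce to a finite one. Granting this, $\widetilde h_f$ satisfies both defining properties of Theorem \ref{thm:call silverman}, hence $\widetilde h_f = h_f$ and the proposition follows.
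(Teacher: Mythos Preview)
Your proof is correct and follows essentially the same route as the paper: define the right-hand side, check independence of the lift via the product formula, deduce the functional equation from the local relations $G_{F,v}\circ F = d\,G_{F,v}$, establish $h-\widetilde h_f = O(1)$ from the fact that $G_{F,v} = \log\norm{\cdot}_v$ at all but finitely many places, and conclude by the uniqueness clause in Theorem~\ref{thm:call silverman}. The paper additionally remarks that the expression is independent of the lift $F$ of $f$ (via $G_{\alpha F} = G_F + \tfrac{1}{d-1}\log|\alpha|$), and states the good-reduction fact more tersely; your explicit identification of this step as the key arithmetic input (integrality of coefficients and invertibility of the resultant at almost all places) is a welcome clarification.
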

 
 \begin{proof}
 Let $\kappa(x_0, x_1)$ denote the right hand side of the formula. 
 The product formula together with the  identities  $G_F(\lambda x,\lambda y) = \log\abs{\lambda} +G_F(x,y)$ and 
 $G_{\alpha F} = G_F+\unsur{d-1}\log\abs{\alpha}$  imply that  $\kappa$ is independent 
 of the lifts $F$ of $f$ and $[x_0:x_1]$ of $x$. In particular $\kappa$ is a function of $x$ only.
 
 The invariance relations for the local Green functions imply that 
 $\kappa\circ f = d\; \kappa$. Since furthermore $G_{F, v}(x_0, x_1) = \log\norm{(x_0, x_1) }_v$ for all but finitely many places, 
 we infer from the second property of Theorem \ref{thm:green NA} that $\kappa-h = O(1)$. Then the result follows from 
 the uniqueness assertion in Theorem \ref{thm:call silverman}. 
 \end{proof}

 \section{Dynamical consequences of arithmetic equidistribution}\label{sec:dynamics_equidist}
 
\subsection*{Equidistribution of preperiodic points} 
It turns out that  the Call-Silverman canonical height satisfies the assumptions of Yuan's equidistribution theorem for points of small height (see \cite[Thm. 7.4]{acl}). From this for rational maps defined over number fields 
we can obtain  refined versions of previously known equidistribution theorems. 

\begin{thm}\label{thm:dynamics_equidist}
Let $f\in \overline \Q(X)$ be a rational map of degree $d\geq 2$ and $(x_n)$ be any infinite sequence of preperiodic 
points of $f$. Then in $\pu(\cc)$ we have that 
\begin{equation}\label{eq:yuan}
\frac{1}{[\Q(x_n):\Q]} \sum_{y\in G(x_n)} \delta_y \underset{n\to\infty}\longrightarrow \mu_f,
\end{equation}
where $G(x_n)$ denotes  the set of Galois conjugates of $x_n$. 
\end{thm}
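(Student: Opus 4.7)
The strategy is a direct appeal to Yuan's arithmetic equidistribution theorem for points of small height, applied to the Call--Silverman canonical height $h_f$ described in the previous section. Yuan's theorem (see \cite{acl}) asserts that for an adelically metrized semipositive line bundle on a projective variety, if $(x_n)$ is a \emph{generic} sequence of algebraic points whose heights tend to zero, then for every place $v$ the averages over Galois conjugates converge to the local canonical measure at $v$. Our task is to verify the two hypotheses and to identify the resulting measure at the Archimedean place with $\mu_f$.

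First I would check the height hypothesis. By the proposition characterizing preperiodic points, we have $h_f(x_n) = 0$ for every $n$, so the sequence of canonical heights is identically zero; in particular $h_f(x_n) \to 0$. Next I would check genericity: an infinite sequence $(x_n)$ in $\pu(\overline\Q)$ is generic precisely when it does not accumulate on a proper Zariski closed subset, and on $\pu$ the proper closed subsets are just the finite ones. If we assume, as is implicit in the statement, that the points $x_n$ are pairwise distinct, then any finite subset of $\pu(\overline\Q)$ is eventually avoided, so the sequence is generic. (If $(x_n)$ is only required to be infinite as a \emph{set}, one may pass to a subsequence of distinct points at the outset.)

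Once the hypotheses are in place, Yuan's theorem applies and yields convergence, for each place $v \in \mathcal{M}_K$, of the Galois-averaged Dirac masses to the local canonical measure $\mu_{f,v}$ associated with $h_f$ at $v$. The final step is to identify $\mu_{f,\infty}$ with the equilibrium measure $\mu_f$ from Theorem~\ref{thm:equilibrium}. This follows from the local decomposition
\[
h_f(x) = \frac{1}{[K:\Q]} \sum_{v\in \mathcal{M}_K} \e_v \, G_{F,v}(x_0,x_1)
\]
established at the end of the previous section: at the Archimedean place, $G_{F,\infty}$ is precisely the dynamical Green function on $\aa^2(\cc)$ built from $f$, whose associated $dd^c$ on $\pu(\cc)$ is $\mu_f$ by the construction recalled in the proof of Theorem~\ref{thm:equilibrium}. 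Restricting the conclusion of Yuan's theorem to the Archimedean place then gives exactly the claimed convergence.

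The substantive content is entirely encapsulated in the arithmetic equidistribution theorem; the main conceptual point one must take care of is that the adelic metric defined by the local Green functions $G_{F,v}$ is indeed semipositive (i.e.\ belongs to the class of metrics for which Yuan's theorem applies), which is what the uniform limit construction of $G_{F,v}$ from the telescoping sum in Theorem~\ref{thm:green NA} guarantees. Beyond this, the argument is a mechanical verification of hypotheses.
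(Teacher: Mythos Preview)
Your proposal is correct and follows exactly the approach the paper intends: the paper does not give a detailed proof but simply states that the Call--Silverman canonical height satisfies the hypotheses of Yuan's equidistribution theorem, and presents the theorem as an immediate consequence. Your write-up merely makes explicit the verification of the small-height and genericity hypotheses and the identification of the Archimedean local measure with $\mu_f$, all of which the paper leaves implicit.
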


 The reason of the appearance of $\mu_f$ in \eqref{eq:yuan} is to be found in the description of the canonical height 
 given in  Proposition \ref{prop:h_places}. 
When $G(x_n)$ is the set of all periodic points of a given period, this result is  a direct consequence of the last item of 
Theorem \ref{thm:equilibrium}. 
Indeed, by Theorem \ref{thm:fatou_shishikura}, $f$ admits at most finitely many non-repelling points.  
  Likewise, we may also obtain in the same way an arithmetic proof of the convergence of the measures $\mu_{n, z}$ of Theorem 
 \ref{thm:equilibrium}.

 \subsection*{Rigidity}  As an immediate consequence of Theorem \ref{thm:dynamics_equidist} we obtain the following rigidity statement. 
 
 \begin{cor}\label{cor:rigidity}
If $f$ and $g$ are rational maps  defined over  $\overline \Q$ of degree at least 2 are such that $f$ and $g$ have 
 infinitely many common preperiodic points, then $\mu_f  = \mu_g$ (so in particular $J(f) = J(g)$). 
\end{cor}
   
 Rational maps with the same equilibrium measure were classified  by Levin and Przytycki~\cite{levin_przytycki}. 
 The classification  is  a bit  difficult to state precisely, but the idea is  that if $f$ and $g$ have the same equilibrium measure
 and do not belong to a small list of well understood exceptional 
 examples (monomial maps, Chebychev polynomials, and Lattès maps)
 then  some iterates  $f^n$ and $g^m$ are  related by a certain correspondence.  
   If $f$ and $g$ are polynomials of the same degree, then the answer becomes simple: $\mu_f =\mu_g$  if and only 
  if there exists a linear transformation 
 $L$ in $\cc$ such that $g=L\circ f$ and $L(J) = J$, where $J = J(f)  = J(g)$. 
 A consequence of this classification is that for non-exceptional $f$ and $g$, 
  if $\mu_f= \mu_g$ then $f$ and $g$ have the same sets of preperiodic points 
(see \cite[Rmk 2]{levin_przytycki}). 
   
  Note also that  Yuan's equidistribution theorem   says more:   a meaning can be given 
  to the    convergence \eqref{eq:yuan} at   finite places, and indeed 
 Theorem \ref{thm:dynamics_equidist} holds at all places. It then follows from the description of the canonical height 
 given in Proposition \ref{prop:h_places} that if
  $f$ and $g$  are as in Corollary \ref{cor:rigidity}, then $h_f = h_g$, 
  in particular the sets of preperiodic points of  $f$ and $g$ coincide. 
    Denote by $\preper(f)$ the set of preperiodic points of $f$. 
   
  \begin{thm} 
  If $f$ and $g$ are rational maps   of degree at least 2, defined over $\cc$.  Consider the  following    conditions:
  \begin{enumerate} 
  \item[(1)] $\#\preper(f)\cap \preper(g) = \infty $;
  \item[(2)] $\preper(f) = \preper(g) $;
  \item[(3)] $ \mu_f = \mu_g$.
\end{enumerate}
Then    (1) is equivalent to (2) and both imply (3).  If $f$ and $g$ are not exceptional then the converse implication holds as well. 
  \end{thm}
 
 \begin{proof}
 This statement was already fully justified   when  $f$ and $g$ have  their coefficients in $\overline \Q$. Note also that 
the implication  $(3) \Rightarrow (2)$     proven in    \cite{levin_przytycki} and mentioned above 
  holds for arbitrary non-exceptional  
rational  maps. The equivalence between (1) and (2) for maps with complex coefficients was established by  
 Baker and DeMarco~\cite{baker_demarco1} (see also \cite[Thm. 1.3]{yuan_zhang}),  using some equidistribution results  over arbitrary valued fields as well as specialization arguments. Finally, the implication $(2)\Rightarrow (3)$ in this general setting was obtained by Yuan and Zhang~\cite[Thm. 1.5]{yuan_zhang}.
 \end{proof}

 \part{Parameter space questions}
 
 The general setting in this second part is the following: let $(f_\lambda)_{\la\in \La}$ be a holomorphic family of rational maps of 
 degree $d\geq 2$ on $\pu(\cc)$, that is for every $\la$, $f_\la(z)  = \frac{P_\la(z)}{Q_\la(z)}$ and $P_\la$ and $Q_\la$ depend holomorphically on $\la$ and have no common factors for every $\la$. Here  
   the parameter space $\Lambda$ is an arbitrary complex manifold (which may be the space of all rational maps of degree $d$). 
   From the dynamical point of view we can define a natural dichotomy $\La  = \stab\cup\bif$ of the parameter space 
   into an open {\em stability 
   locus}\index{Stability locus}\index{Locus>stability=(Stability ---)} $\stab$,\glossary{Stab: Stability locus} where the dynamics is in a sense locally constant, and its complement the {\em bifurcation locus}\index{Bifurcation>locus}\index{Locus>bifurcation=(Bifurcation ---)} $\bif$.\glossary{Bif: Bifurcation locus} Our purpose is to show how arithmetic ideas can give interesting information on these parameter spaces.

   \section{The quadratic family}\label{sec:quadratic}
   
   The most basic  example of this parameter dichotomy is given by the family of quadratic polynomials. Any degree 2 
   polynomial is affinely conjugate to a (unique) polynomial of the form $z\mapsto z^2+c$. so dynamically speaking the family of 
   quadratic polynomials {\em is} $\set{f_c: z\mapsto z^2+c , c\in \cc}$.  It was studied in great depth 
   since the beginning of the 1980's, starting with  the classic monograph by Douady and Hubbard \cite{douady_hubbard}. 
   
   Let $f_c(z) = z^2+c$ as above. A first observation is that if $\abs{z}>\sqrt{\abs{c}}+ 3$ then 
\begin{equation}\label{eq:escape}
\abs{z^2+c}\geq \abs{z}^2 - \abs{c}
    \geq 2 \abs{z}+1
    \end{equation} so $f_c^n(z) \to \infty$. Hence we deduce that 
      the      filled  Julia set $K_c   = K({f_c})$ is contained in $D(0, \sqrt{\abs{c}}+ 3)$. Recall that the Julia set of $f_c$ 
       is $J_c = \fr K _c$.  Note that the critical point is 0. 
       
       \subsection*{Connectivity of $J$} Fix $R$ large enough so that 
       $f\inv(D(0, R))\Subset D(0, R)$ ($R= (\abs{c}+1)^2+1$ is enough) then by the maximum principle, $f\inv(D(0, R))$ is a union of simply connected open sets (topological disks). 
       Since $f :  f\inv(D(0, R))\to D(0, R)$ is proper it is a branched covering  of degree 2
       so the topology of  
       $ f\inv(D(0, R))$ can be determined from the Riemann-Hurwitz formula. There are two possible cases:
     \begin{itemize}
     \item Either $f(0) \notin D(0, R)$. Then  $f\rest{ f\inv(D(0, R))}$ is a covering and $ f\inv(D(0, R)) = U_1\cup U_2$ is the union of 
     two  topological disks on which $f\rest{U_i}$ is a biholomorphism. Let    $g_i = (f\rest{U_i})\inv:D(0, R) \to U_1 $, which is 
     a contraction for the Poincaré metric in $D(0, R)$.  It follows that 
     $$K_c  = \bigcap_{n\geq 1} \bigcup_{(\e_i)  \in \set{1, 2}^n } g_{\e_n}\circ \cdots \circ   g_{\e_1} (D(0, R))$$  is a 
     Cantor set. 
   \item Or $f(0) \notin D(0, R)$. Then $ f\inv(D(0, R))$ is a topological disk and $f: f\inv(D(0, R))\to D(0, R)$ is a branched covering of degree 2. 
   \end{itemize}
 More generally, start with $D(0, R)$ and pull back under $f$ until $0\notin f^{-n}((D(0,R))$. Again there are two possibilities:
 \begin{itemize}
 \item Either this never happens. Then $K_c$ is a nested intersection of topological disks, so it is a connected compact set which may
 or may not have interior. 
 \item Or this happens for some $n\geq 1$. Then the above reasoning shows that $K_c = J_c$ is a Cantor set. 
   \end{itemize}
   
   Summarizing the above discussion, we get the following alternative:
    \begin{itemize}
     \item either $0$ escapes under iteration and $K_c = J_c$ is a Cantor set;
     \item or $0$ does not escape  under iteration and $K_c$ is connected  (and so is $J_c$). 
   \end{itemize}
   We   define the {\em Mandelbrot set}\index{Mandelbrot set} $M$ to be the set of $c\in \cc$ such that $K_c$ is connected. 
   From the previous alternative we see that the complement of $M$ is an open subset of the plane. 
Furthermore it is easily seen from   \eqref{eq:escape} that  0 escapes  when  $\abs{c}>4$,  
    so $M$ is a compact set in $\cc$. The Mandelbrot set is full (that is, its complement has bounded component) and  
  has non-empty interior: indeed if the critical point is attracted by a periodic cycle for $c=c_0$, then this behavior persists for $c$ close to $c_0$ and $K_c$ is connected in this case. 
   
   The following proposition is an easy consequence of the previous discussion.
   
   \begin{prop}
   $c$ belongs to  $\fr M$ if and only if for every open neighborhood $N\ni c$, $(c\mapsto f^n_c(0))_{n\geq 1}$ is not a normal family on $N$. 
   \end{prop}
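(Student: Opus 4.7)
The plan is to show the two directions by analyzing the family $\mathcal{F} = \{c\mapsto f_c^n(0)\}_{n\geq 1}$ of holomorphic functions separately on the interior of $M$, on its complement, and on its boundary, using Montel's theorem and the preceding dichotomy for $K_c$.

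First I would establish normality off $\partial M$. Recall from the discussion above that $K_c\subset D(0,\sqrt{\abs c}+3)$, and that $c\in M$ iff $(f_c^n(0))_n$ stays bounded (equivalently, lies in $K_c$). Consequently, if $c_0\in \mathring M$, one can pick a neighborhood $N\subset \mathring M$ on which $\sqrt{|c|}+3$ is uniformly bounded, so that $\{f_c^n(0)\}$ is a locally uniformly bounded family on $N$; Montel's theorem then gives normality. If instead $c_0\in M^{\,c}$ (which is open since $M$ is closed), one uses the dynamical Green function $G_{f_c}(z)$ from the polynomial version of the equilibrium measure discussion: since $G_{f_c}(0)$ depends continuously on $c$ and is strictly positive on $M^{\,c}$, the estimate $\log^+|f_c^n(0)| = 2^n G_{f_c}(0)+O(1)$ yields $f_c^n(0)\to\infty$ locally uniformly on $M^{\,c}$ in the spherical sense, hence normality.

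Next I would prove the converse: if $c_0\in\partial M$, then $\mathcal{F}$ is not normal on any neighborhood $N\ni c_0$. Assume for contradiction that some subsequence $f_{c}^{n_k}(0)$ converges locally uniformly (in the spherical metric) on $N$ to a meromorphic map $g\colon N\to\pu(\cc)$. Since $c_0\in\partial M$ and $M^{\,c}$ is open, the set $N\cap M^{\,c}$ is a non-empty open subset of $N$ on which $f_c^n(0)\to\infty$, so $g\equiv\infty$ there. By the identity principle for meromorphic functions on the connected open set $N$ (or simply because $\infty$ is attained on a set with interior), $g\equiv\infty$ on all of $N$. But $c_0\in\overline M\cap N$, and $f_{c_0}^{n_k}(0)$ lies in the bounded set $K_{c_0}\subset D(0,\sqrt{|c_0|}+3)$, so $g(c_0)\in\cc$. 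This contradiction completes the argument.

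The main subtlety is the locally uniform escape on $M^{\,c}$: one needs that $f_c^n(0)$ tends to $\infty$ uniformly on compact subsets of $M^{\,c}$, not merely pointwise, and this is exactly where continuity and strict positivity of the Green function $c\mapsto G_{f_c}(0)$ (equivalently, an escape-rate estimate uniform in $c$) is invoked. Everything else is bookkeeping around the escape criterion $|z|>\sqrt{|c|}+3\Rightarrow f_c^n(z)\to\infty$ and the elementary fact that $M$ is closed with open complement.
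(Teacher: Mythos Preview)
Your proof is correct and fleshes out exactly the kind of argument the paper has in mind (the paper gives no proof, merely calling it ``an easy consequence of the previous discussion''). The three-case split $\mathring M$ / $M^\complement$ / $\partial M$, together with Montel and the identity principle, is the natural route.

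One small remark: invoking the dynamical Green function for the locally uniform escape on $M^\complement$ is correct but heavier than what the ``previous discussion'' already provides. The elementary estimate $\abs{z}>\sqrt{\abs{c}}+3 \Rightarrow \abs{f_c(z)}\geq 2\abs{z}+1$ shows that once a single iterate $f_{c_0}^{N_0}(0)$ crosses the escape radius, continuity in $c$ propagates this to a neighborhood with the \emph{same} $N_0$, after which $\abs{f_c^{N_0+k}(0)}\geq 2^k\abs{f_c^{N_0}(0)}$ gives uniform divergence. So the Green function is optional here, and your identification of this step as the ``main subtlety'' is accurate for the direction $c\notin\partial M \Rightarrow$ normality, while the non-normality direction at $c_0\in\partial M$ only uses pointwise escape on $M^\complement$ (which you also observe implicitly). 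Finally, make sure to state explicitly that $N$ is taken connected (e.g.\ a disk) before invoking the identity principle; you do mention it, but it should be fixed at the outset of the contradiction argument.
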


\subsection*{Aside: active and passive critical points} 
We will generalize the previous observation to turn it into a quite versatile concept. Let $(f_\la)_{\la\in \La}$ be a 
holomorphic family of rational maps as before. In such a family a   critical point for $f_{\lo}$ cannot always be followed 
holomorphically due to ramification problems (think of $f_\la(z) = z^3+ \la z$), however this can always be arranged by passing to a branched 
 cover of $\La$ (e.g. by replacing $z^3+ \la z$ by $z^3+\mu^2z$). 
 We say that a holomorphically moving critical point $c(\la)$ is {\em marked}.\index{Marked critical point}\index{Critical point>marked=(Marked ---)}

\begin{defi}
Let $(f_\la, c(\la))_{\la\in \La}$ be a holomorphic family of rational maps with a marked critical point. We say
that $c$ is \emph{passive} on some open set $\om$ if the sequence of meromorphic mappings
 $(\la\mapsto f_\la^n(c(\la)))_{n\geq 0}$ is normal in $\om$. Likewise we say that $c$ is \emph{passive}\index{Passive critical point}\index{Critical point>passive=(Passive ---)} at $\lo$ if it is passive in some 
 neighborhood of $\lo$. Otherwise $c$ is said \emph{active}\index{Active critical point}\index{Critical point>active=(Active ---)} at $\lo$. 
 \end{defi}

This is an important concept in the study of the stability/bifurcation theory of rational maps, according to the principle 
``the dynamics is governed by that of critical points''. The terminology is due to McMullen.  
The next proposition is a kind of 
parameter analogue of the density of periodic points in the Julia set. 

\begin{prop} \label{prop:levin}
Let $(f_\la, c(\la))_{\la\in \La}$ be a holomorphic family of rational maps of degree $d\geq 2$
 with a marked critical point. If $c$ is active at $\lo$ there exists an infinite sequence 
  $(\la_n)$ converging to $\lo$ such that 
 $c(\la_n)$ is preperiodic for $f_{\la_n}$. More precisely we can   arrange so that 
 \begin{itemize}
 \item either  $c(\la_n)$ falls under iteration  onto a repelling periodic point;
 \item or $c(\la_n)$ is periodic.
 \end{itemize}
\end{prop}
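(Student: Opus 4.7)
The strategy is a classical application of Montel's theorem with three moving target values, playing the critical point off against repelling periodic orbits. The starting point is to transform the failure of normality of the family $(g_n)_{n\geq 0}$, where $g_n(\la) := f_\la^n(c(\la))$, into an arithmetic incidence between $c(\la)$ and concrete periodic objects.

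First, I would produce two holomorphically moving repelling periodic points $a_1(\la)$ and $a_2(\la)$ defined on some open neighborhood $U$ of $\lo$, with the additional property that $a_1(\lo)$, $a_2(\lo)$ and $c(\lo)$ are pairwise distinct. This is possible because $f_{\lo}$ has infinitely many repelling cycles by the Fatou--Julia theorem on the density of repelling periodic points, and any such cycle extends holomorphically by the implicit function theorem applied to $f_\la^{p}(z)-z=0$ (the multiplier being different from $1$).

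Next, consider the sequence of holomorphic maps $g_n : U \to \pu$. Choose the Möbius transformation $\phi_\la \in \mathrm{Aut}(\pu)$, depending holomorphically on $\la\in U$, that sends $(a_1(\la), a_2(\la), c(\la))$ to $(0, 1, \infty)$. If for every $\la$ close enough to $\lo$ and every $n\geq 0$ one had $g_n(\la) \notin \{a_1(\la), a_2(\la), c(\la)\}$, then the family $\phi_\la \circ g_n$ would be a family of meromorphic functions on a neighborhood of $\lo$ avoiding $\{0,1,\infty\}$, hence normal by Montel. Post-composing a convergent subsequence with $\phi_\la^{-1}$ (also holomorphic in $\la$) would yield normality of $(g_{n_k})$, contradicting activity of $c$ at $\lo$. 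Therefore there exist sequences $\la_n\to\lo$ and $k_n\to\infty$ with $f_{\la_n}^{k_n}(c(\la_n))\in\{a_1(\la_n), a_2(\la_n), c(\la_n)\}$.

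Finally, extract a subsequence so that the same alternative occurs for all $n$. Either infinitely many equalities are $f_{\la_n}^{k_n}(c(\la_n))=c(\la_n)$, giving the second case ($c(\la_n)$ periodic), or infinitely many are of the form $f_{\la_n}^{k_n}(c(\la_n))=a_i(\la_n)$ for some $i\in\{1,2\}$, giving the first case ($c(\la_n)$ is mapped to a repelling periodic point). The $\la_n$ can be taken pairwise distinct and tending to $\lo$ by repeating the argument on a shrinking sequence of neighborhoods of $\lo$, since activity persists on each of them.

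The main technical point is the Montel argument with moving targets; the cleanest way to handle it is through the holomorphically varying Möbius change of coordinates $\phi_\la$ described above, which reduces the problem to the standard form of Montel's theorem. One should also verify that the $\la_n$ can be chosen genuinely distinct, which follows from the fact that activity cannot disappear after shrinking the neighborhood.
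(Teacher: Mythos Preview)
Your argument is correct and follows essentially the same idea as the paper's proof: apply Montel's theorem with three holomorphically moving targets built from repelling periodic points, which persist by the implicit function theorem. The only noteworthy difference is the choice of targets. The paper picks three points $\alpha_1,\alpha_2,\alpha_3$ from a \emph{single} repelling cycle of length $\geq 3$ for $f_{\lo}$, so that the Montel alternative immediately yields a parameter where $c$ lands on that repelling cycle; the periodic case is then dismissed as ``similar''. You instead take two repelling periodic points $a_1,a_2$ together with $c(\la)$ itself as the third target, which has the pleasant feature of producing both conclusions of the proposition in one stroke. One small point to make explicit: after ensuring $a_1(\lo),a_2(\lo),c(\lo)$ are pairwise distinct, you should shrink $U$ so that they remain pairwise distinct on all of $U$ (otherwise $\phi_\la$ is not defined); and the family $(g_n)$ should be taken for $n\geq 1$, since $g_0\equiv c$ trivially meets the target $c(\la)$.
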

 
\begin{proof}
Fix a repelling cycle of length at least 3 for $f_{\lo}$. By the implicit function theorem, 
this cycle persists and stays repelling in some   neighborhood of $\lo$. More precisely if we fix 3 distinct points 
$\alpha_i(\lo)$, $i= 1,2,3$  in this cycle, there exists an open neighborhood $N(\lo)$ and 
holomorphic maps $\alpha_i: N(\lo)\to \pu$  such that $\alpha_i(\la)$ are holomorphic continuations of 
$\alpha_i(\lo)$ as periodic points. Since the family $(f_\la^n(c(\la)))_{n\geq 1}$ is not normal in 
$N(\lo)$  by Montel's theorem, there exists $\la_1\in N(\lo)$,   $n\geq 1$ and $i\in \set{1,2,3}$ such that 
$f_{\la_1}^n(c(\la_1)) = \alpha_i(\la_1)$.  Thus the first assertion is proved, and the other one is similar
 (see \cite{levin} or  \cite{preper} for details).
\end{proof}

The previous result may be interpreted by saying that 
 an active critical point is a source of bifurcations.  Indeed, given any holomorphic family $(f_\la)_{\la\in \La}$
 of rational maps,   
 $\La$  can be written as the union of an open  {\em stability locus}\index{Stability locus} $\stab$ where  the dynamics is (essentially) 
 locally constant on $\pu$ and its complement, the {\em bifurcation locus}\index{Bifurcation>locus} $\bif$ where the dynamics
changes drastically. 
When all critical points are marked, the bifurcation locus is exactly 
 the union of the activity loci
of the critical points.
It is a fact that in any holomorphic family, the bifurcation locus is a fractal set 
with rich topological structure.

\begin{thm}[Shishikura \cite{shishikura}, Tan Lei \cite{tanlei}, McMullen \cite{mcmullen_universal}]\label{thm:shishikura}
If $(f_\la)_{\la\in\La}$ is any holomorphic family of rational maps with non-empty bifurcation locus,   
the Hausdorff dimension of $\bif$ is equal to that of $\La$. 
\end{thm}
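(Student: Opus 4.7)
The plan is to combine three ingredients: the existence of Misiurewicz parameters inside every non-empty bifurcation locus (Proposition \ref{prop:levin}), a local similarity principle between parameter space and dynamical space at such parameters, and the existence of rational maps with Julia sets of Hausdorff dimension $2$ (a deep theorem of Shishikura). The overall idea is to embed, via a non-degenerate holomorphic map, a dimension-two piece of some Julia set into $\bif$. I first treat the case $\dim_{\cc}\La=1$ and then bootstrap.

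Assume $\dim_\cc\La=1$ and fix $\lo\in\bif$. After passing to a branched cover of $\La$ so that all critical points are marked, the identification of $\bif$ with the union of the activity loci yields a critical point $c(\la)$ active at $\lo$. By Proposition \ref{prop:levin}, there are Misiurewicz parameters $\la^*$ arbitrarily close to $\lo$ for which $f_{\la^*}^N(c(\la^*))=z_0$, with $z_0$ a repelling periodic point of period $p$ and multiplier $\rho$, $\abs{\rho}>1$. Let $\phi$ be the local linearization of $f_{\la^*}^p$ at $z_0$, so that $\phi\circ f_{\la^*}^p=\rho\cdot\phi$. Set
\[
\Phi_n(\la) = \rho^{-n}\,\phi\bigl(f_\la^{N+np}(c(\la))\bigr).
\]
Using the functional equation $\Phi_{n+1}=\rho^{-1}(\phi\circ f_\la^p\circ\phi^{-1})\circ\Phi_n$, one shows that $\Phi_n$ converges locally uniformly near $\la^*$ to a holomorphic map $\Phi_\infty$. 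The crucial non-degeneracy statement is $\Phi_\infty'(\la^*)\neq 0$: were it to vanish, a normal-families argument would force $(f_\la^n(c(\la)))$ to be normal at $\la^*$, contradicting the activity of $c$.

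With $\Phi_\infty$ locally biholomorphic at $\la^*$, the similarity principle reads that any parameter $\la$ near $\la^*$ for which $\Phi_n(\la)$ eventually lies in a fixed compact subset of $\phi(J(f_{\la^*}))$ belongs to $\bif$. Consequently, for a compact $K\subset\phi(J(f_{\la^*}))$ of Hausdorff dimension $\delta$, the preimages $\Phi_n^{-1}(K)$ accumulate, as $n\to\infty$, on a subset of $\bif$ of Hausdorff dimension $\delta$. Therefore $\dim_H\bif\geq\dim_H J(f_{\la^*})$. To conclude $\dim_H\bif=2$, I would invoke Shishikura's theorem producing Misiurewicz parameters $\la^*$, clustering on any prescribed bifurcation parameter, whose Julia set has Hausdorff dimension arbitrarily close to $2$; his original parabolic-implosion construction transports to an arbitrary family because parabolic parameters always lie in $\bif$ and accumulate Misiurewicz parameters via Proposition \ref{prop:levin}.

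For $\dim_\cc\La=k\geq 2$ the reduction proceeds by slicing. Near $\lo\in\bif$ the activity of $c$ is controlled by the bifurcation current $T_{\bif}$, whose support equals $\bif$; generic complex one-dimensional disks $D_t$ through $\lo$ meet $\bif$ in a non-empty bifurcation locus for the restricted family. Applying the previous step to each such slice yields $\dim_H(\bif\cap D_t)\geq 2$ on a set of slices of positive transverse Lebesgue measure, and a Fubini--Marstrand slicing inequality for Hausdorff dimension then gives $\dim_H\bif\geq 2+(2k-2)=2k=\dim_\re\La$; the reverse inequality is trivial. The two main obstacles I foresee are verifying the transversality $\Phi_\infty'(\la^*)\neq 0$ at a well-chosen Misiurewicz parameter, and porting Shishikura's dimension-two construction so that it produces suitable Misiurewicz parameters approaching every point of $\bif$ in an arbitrary family; the higher-dimensional slicing step is cleanest when phrased in terms of the self-intersections $T_{\bif}^{\wedge k}$ from pluripotential theory.
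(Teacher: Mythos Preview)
The paper does not actually prove this theorem: after the statement it only records that Shishikura established $\mathrm{HDim}(\partial M)=2$ for the quadratic family and that Tan Lei and McMullen transferred this to arbitrary families, with no further argument. So there is no ``paper's own proof'' to compare against; your sketch is in fact an outline of the approach in the cited references, and in broad strokes it is the right one: Misiurewicz parameters via Proposition~\ref{prop:levin}, Tan Lei's similarity between $J(f_{\la^*})$ and $\bif$ near $\la^*$, and Shishikura's parabolic-implosion input to get Julia sets of dimension close to $2$.

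Two points in your sketch deserve sharpening. First, your justification of $\Phi_\infty'(\la^*)\neq 0$ is not valid: vanishing of the derivative at a single point does not by itself force normality of $(f_\la^n(c(\la)))$ near $\la^*$. What activity does give you is that $\Phi_\infty$ is non-constant (if $\Phi_\infty\equiv 0$ then the critical orbit stays in a fixed neighbourhood of the repelling cycle for all nearby $\la$, hence the sequence is normal), and non-constancy is already enough for the dimension transfer since a non-constant holomorphic map preserves Hausdorff dimension locally. Genuine transversality at Misiurewicz parameters is a separate, harder statement. Second, your transport of Shishikura's construction (``parabolic parameters always lie in $\bif$'') is too quick: an arbitrary one-parameter family need not contain parabolic parameters at all, and the actual mechanism in \cite{tanlei, mcmullen_universal} goes through quasiconformal copies of pieces of the Mandelbrot set (McMullen's universality) rather than a direct parabolic argument inside the given family. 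Your higher-dimensional slicing via Marstrand is a reasonable way to finish once the curve case is in hand.
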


Recall that the Hausdorff dimension of a metric space is a non-negative number which somehow measures the 
scaling behavior of the 
metric. For a manifold, it coincides with its topological dimension, but for a fractal set it is typically not an integer. 
 In the quadratic family, the bifurcation locus is the boundary of the Mandelbrot set, and 
  Shishikura  \cite{shishikura} proved that $\mathrm{HDim}(\fr M) = 2$. Therefore $\fr M$ is a nowhere dense subset 
which in a sense locally fills the plane.   It was then shown in 
  \cite{tanlei, mcmullen_universal} that this topological complexity can be transferred to  any holomorphic family, 
  resulting in  the above theorem.

   \subsection*{Post-critically finite parameters in the quadratic family} 
   
   A rational map is said {\em post-critically finite}\index{Post-critically finite rational map}\index{Rational map>postcriticallyfinite=(Post-critically finte ---)} if its critical set has a finite orbit, that is, every critical point is periodic 
   or preperiodic. The post-critically finite parameters in the quadratic family are the solutions of the countable family of 
   polynomial equations $f_c^k(0) = f_c^l(0)$, with $k>l\geq 0$, and form a countable set of ``special points'' in parameter
   space. 
   
   As a consequence of  Proposition \ref{prop:levin} in the quadratic family we get 
   
   \begin{cor} $\fr M$ is contained in the closure of the set of post-critically finite parameters. 
    \end{cor}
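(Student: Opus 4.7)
The plan is to identify $\partial M$ with the activity locus of the unique critical point $0$ in the quadratic family, and then apply Proposition \ref{prop:levin} directly.

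First, I would observe that in the family $(f_c)_{c\in\cc}$ with $f_c(z)=z^2+c$, the unique (finite) critical point is $0$, which is automatically marked as the constant map $c\mapsto 0$. The proposition stated just before the corollary asserts that $c\in \partial M$ if and only if the sequence $(c\mapsto f_c^n(0))_{n\geq 1}$ fails to be normal in any neighborhood of $c$. In the language of active/passive critical points, this is exactly the statement that the marked critical point $c(\cdot)\equiv 0$ is active at every $c\in\partial M$.

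Next, I would apply Proposition \ref{prop:levin} at each $c_0\in\partial M$: since $0$ is active at $c_0$, there exists a sequence $c_n\to c_0$ such that $0$ is preperiodic under $f_{c_n}$, i.e.\ $f_{c_n}^k(0)=f_{c_n}^l(0)$ for some $k>l\geq 0$. Since $0$ is the only finite critical point of $f_{c_n}$, this is exactly the condition that $f_{c_n}$ is post-critically finite. Therefore every $c_0\in\partial M$ is a limit of post-critically finite parameters, which is the desired statement.

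There is essentially no obstacle: the result is an immediate combination of the normality characterization of $\partial M$ established in the preceding discussion and the general parameter-density statement of Proposition \ref{prop:levin}. The only (minor) point worth noting is that Proposition \ref{prop:levin} provides preperiodicity of the marked critical point in $f_{c_n}$, which for quadratic polynomials is equivalent to post-critical finiteness because the critical point at $\infty$ is fixed and contributes no postcritical set in $\cc$.
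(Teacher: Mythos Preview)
Your proposal is correct and follows the same approach as the paper, which simply presents the corollary as an immediate consequence of Proposition \ref{prop:levin} applied to the quadratic family. You have spelled out the details (identifying $\partial M$ with the activity locus via the preceding normality proposition, and noting that preperiodicity of $0$ is equivalent to post-critical finiteness for quadratic polynomials) that the paper leaves implicit.
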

   
Post-critically finite quadratic polynomials can be of two different types:
\begin{itemize}
\item either  the critical point 0 is periodic. in this case 
   $c$ lies in the interior of $M$. Indeed
    the attracting periodic 
   orbit persists in some  neighborhood of $c$, thus persistently attracts the critical point and $K_c$ remains connected;
   \item or $0$ is strictly preperiodic. Then  it can be shown  that 
 it must fall on a repelling cycle, so it is active and $c\in \fr M$.   
\end{itemize}

 The previous corollary can be strengthened to an equidistribution statement. 
 
 \begin{thm} \label{thm:equidist mandelbrot}
 Post-critically finite parameters are asymptotically equidistributed.
 \end{thm}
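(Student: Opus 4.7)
The plan is to recognize this as a direct application of arithmetic equidistribution, in the spirit of Chambert--Loir's lectures \cite{acl}, applied to a suitable height on the parameter line. For $c \in \overline{\Q}$, define the \emph{critical height}
\[
h_{\mathrm{crit}}(c) \;:=\; h_{f_c}(0),
\]
the Call--Silverman canonical height of the critical point $0$ under $f_c(z) = z^2 + c$. By the proposition recalled just before Theorem~\ref{thm:call silverman}, $h_{\mathrm{crit}}(c) = 0$ if and only if $0$ is preperiodic under $f_c$, i.e.\ precisely when $c$ is post-critically finite. Consequently any infinite sequence of distinct PCF parameters is, tautologically, a sequence of small points for $h_{\mathrm{crit}}$, and by Northcott their degrees must go to infinity.

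First I would establish an adelic local decomposition of $h_{\mathrm{crit}}$. For each place $v$ of a number field $K$, the polynomial version of Theorem~\ref{thm:green NA} provides a continuous Green function $G_{f_c,v}$ on $\cc_v$, and a direct inspection of the construction shows that $c \mapsto G_{f_c,v}(0)$ is continuous in $c$, subharmonic (in the Berkovich sense at non-archimedean places), and comparable to $\log^+\abs{c}_v$ at infinity, with an error uniform in $v$ for all but finitely many places. Summing over places gives the local decomposition
\[
h_{\mathrm{crit}}(c) \;=\; \frac{1}{[K:\Q]} \sum_{v \in \mathcal{M}_K} \varepsilon_v\, G_{f_c,v}(0),
\]
which is exactly the data of a continuous semi-positive adelic metric on $\mathcal{O}(1)$ over the parameter line. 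Yuan's arithmetic equidistribution theorem then applies: any sequence of Galois-closed subsets $(G(c_n))$ with pairwise distinct Zariski closure and $h_{\mathrm{crit}}(c_n) \to 0$ has the property that the uniform probability measures on $G(c_n)$ converge, at the archimedean place, to the curvature measure $\mubif := \Delta_c\bigl(G_{f_c}(0)\bigr)$. This measure is classically identified, via the B\"ottcher coordinate of the quadratic family, with the harmonic measure of $M$ supported on $\fr M$. Feeding in any sequence of distinct PCF parameters, which by construction satisfy $h_{\mathrm{crit}} = 0$, yields the theorem.

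The step I expect to be the main obstacle is the verification that the family $\{c \mapsto G_{f_c,v}(0)\}_{v \in \mathcal{M}_K}$ really defines an admissible adelic metric in the sense required by Yuan, continuously and semi-positively \emph{at every place}. At archimedean places this is essentially routine from the locally uniform convergence of the defining sequence $d^{-n}\log^+\abs{f_c^n(0)}_v$; the real work is at non-archimedean places, where one has to set up the Berkovich-analytic framework and verify that the estimate \eqref{eq:norm lift} controls the convergence \emph{uniformly in the place}, so that almost all places contribute only the trivial metric and the global adelic sum makes sense. Once this adelic bookkeeping is done, the equidistribution of PCF parameters is a formal consequence of Yuan's theorem.
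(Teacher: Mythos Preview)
Your approach is correct and is essentially the paper's: build a parameter height from the canonical height of the critical point, decompose it into local Green functions, and invoke arithmetic equidistribution. The difference lies in what you flag as the main obstacle. The paper observes that at every non-archimedean place the local Green function is \emph{explicitly} trivial: the ultrametric inequality gives $|c|_p \leq 1 \Rightarrow |f_c^n(c)|_p \leq 1$ and $|c|_p > 1 \Rightarrow |f_c^n(c)|_p = |c|_p^{2^n}$, hence $G_{f_c,p}(c) = \log^+|c|_p$ on the nose. Thus the adelic datum is an adelic set in the most elementary sense (closed unit ball at every finite place, the Mandelbrot set at the archimedean place), the capacity-one condition follows from $G_M(c) = \log|c| + o(1)$, and the Bilu--Rumely theorem already suffices; no Berkovich framework and no Yuan are needed. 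Your route via Yuan's theorem would also work, but the step you anticipate to be the hardest is in fact a two-line ultrametric computation specific to the quadratic family; the heavier machinery only becomes unavoidable in higher degree (\S\ref{sec:equidist higher}).
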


There are several ways of formalizing this. For a pair of integers $k>l\geq 0$, denote by $\percrit(k,l)$ 
the (0-dimensional) variety 
defined by the polynomial equation $f_c^k(0) = f_c^l(0)$, and by $[\percrit(k,l)]$ the sum of point masses at the 
corresponding points, counting multiplicities. 

Then the precise statement of the  theorem is that there exists a probability measure $\mu_M$ on the Mandelbrot 
set such that if $0\leq k(n) < n$ is an arbitrary sequence, then 
$2^{-n} [\percrit(n,k(n))] \to \mu_M$ as $n\to \infty$. Originally proved by 
Levin \cite{levin_equidist} (see also \cite{mcm_equidist} for $k(n)=0$), this 
result was   generalized by several authors
 (see e.g. \cite{preper, baker_hsia}). Quantitative estimates on the speed of convergence 
are also available \cite{favre_rivera, gauthier_vigny}. 

We   present an   approach to this result   based on arithmetic equidistribution, 
along the lines of \cite{baker_hsia, favre_rivera}. 
 
Recall the dynamical Green function 
$$G_{f_c}(z )  = \lim_{n\to\infty} \frac{1}{2^n} \log^+\abs{f_c^n(z)},$$
which is a non-negative continuous and plurisubharmonic function of $(c,z)\in \cc^2$. Put $G_M(c) = 
G_{f_c}(c) = 2 G_{f_c}(0)$. This function is easily shown to have  the following properties:
\begin{itemize}
\item $G_M$ is non-negative, continuous and subharmonic on $\cc$; 
\item $G_M(c) = \log\abs{c}+ o(1)$ when $c\to \infty$;
\item $\set{G_M =0}$ is the Mandelbrot set;
\item $G_M$ is harmonic on $\set{G_M >0} =M^\complement$.
\end{itemize}
Therefore $G_M$  is the (potential-theoretic) Green function of the Mandelbrot set and $\mu_M:=\Delta G_M$ is 
the harmonic measure of $M$. 
 
 To apply arithmetic equidistribution theory, we need to understand what happens at the non-Archimedean places. 
 For a prime number $p$, let 
 $$M_p = \set{c\in \cc_p, \ (f^n_c(0))_{n\geq 0} \text{ is bounded in } \cc_p}.$$
      
  \begin{prop}~
  
  \begin{enumerate}[{(i)}]
  \item For every $p\in \mathcal{P}$, $M_p$ is the closed unit ball of $\cc_p$.
  \item For every $p\in \mathcal{P}$, for every $c\in \cc_p$, 
  $$G_{M_p}(c) = G_{f_c}(c)   = \lim_{n\to\infty} 2^{-n} \log^+\abs{f^n_c(c)}_p = \log^+\abs{c}_p.$$
  \item The associated height function $h_{\mathbb{M}}$ defined for $c\in \overline \Q$ by 
  $$h_{\mathbb{M}}(c)   = \unsur{[\Q(c):\Q]} \sum_{p\in \mathcal{P}\cup\set{\infty}} \sum_{\sigma: \Q(c) \hookrightarrow \cc_p}
  G_{M_p}(\sigma(c))$$ satisfies 
  $$\set{c\in \overline \Q, \ h_{\mathbb{M}}(c) = 0} = \bigcup_{0\leq k <n} \percrit(k,n).$$
  \end{enumerate}
  \end{prop}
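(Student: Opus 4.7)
The plan is to dispatch (i) and (ii) by direct ultrametric computations, then obtain (iii) via Northcott's theorem applied to the critical orbit.

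For (i), the ultrametric inequality gives $\abs{z^2+c}_p\leq \max(\abs{z}_p^2,\abs{c}_p)$, so if $\abs{c}_p\leq 1$ then by induction $\abs{f_c^n(0)}_p\leq 1$ for every $n$, proving the closed unit ball is contained in $M_p$. Conversely, if $\abs{c}_p>1$ then the quadratic term strictly dominates at each step and a straightforward induction shows $\abs{f_c^n(0)}_p=\abs{c}_p^{2^{n-1}}$ for every $n\geq 1$; the orbit of $0$ therefore escapes to infinity in $\cc_p$ and $c\notin M_p$.

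For (ii), I reuse the same dichotomy applied to the orbit of $c$ itself. If $\abs{c}_p\leq 1$ then $\abs{f_c^n(c)}_p\leq 1$, so $\log^+\abs{f_c^n(c)}_p=0$ for every $n$ and the limit is $0=\log^+\abs{c}_p$. If $\abs{c}_p>1$ then $\abs{f_c^n(c)}_p=\abs{f_c^{n+1}(0)}_p=\abs{c}_p^{2^n}$, so $2^{-n}\log^+\abs{f_c^n(c)}_p=\log\abs{c}_p$, and the limit equals $\log^+\abs{c}_p$. By the polynomial analogue of Theorem \ref{thm:green NA} stated just after its proof, this limit coincides with $G_{f_c}(c)$, yielding $G_{M_p}(c)=G_{f_c}(c)=\log^+\abs{c}_p$.

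For (iii), I argue both implications separately. If $c\in\percrit(k,n)$ with $k<n$, the critical orbit of $f_c$ is finite, hence bounded at every place, so $G_{M_p}(\sigma(c))=0$ for every embedding $\sigma:\Q(c)\hookrightarrow\cc_p$ and every $p\in\mathcal{P}\cup\set{\infty}$, whence $h_{\mathbb{M}}(c)=0$. Conversely, suppose $h_{\mathbb{M}}(c)=0$. Since each local Green function is nonnegative, this forces $G_{M_p}(\sigma(c))=0$ at every place and every embedding. At a finite place, (ii) gives $\abs{\sigma(c)}_p\leq 1$, and (i) applied in $\cc_p$ yields $\abs{\sigma(f_c^n(0))}_p\leq 1$ for every $n$. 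At the Archimedean place, $\sigma(c)\in M$, so the forward orbit of $0$ under $f_{\sigma(c)}$ stays in the compact filled Julia set $K_{\sigma(c)}\subset \overline{D(0,\sqrt{\abs{\sigma(c)}}+3)}$ recorded at the start of the quadratic family discussion, hence is bounded in terms of $\abs{\sigma(c)}$ alone. Collecting these bounds, the set $\set{f_c^n(0):n\geq 0}\subset\Q(c)$ consists of algebraic numbers of uniformly bounded naive height and of degree at most $[\Q(c):\Q]$; Northcott's theorem then forces this set to be finite, so $f_c^k(0)=f_c^n(0)$ for some $k<n$ and $c\in\percrit(k,n)$.

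The only mildly delicate step is the uniform Archimedean bound on the critical orbit, which rests on the compactness of $M$ combined with the explicit disk containment of $K_c$; everything else is routine given (i), (ii), and the height formalism.
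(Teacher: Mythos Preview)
Your proof is correct and, for parts (i) and (ii), essentially identical to the paper's: both exploit the ultrametric dichotomy $\abs{c}_p\leq 1$ versus $\abs{c}_p>1$ to compute the orbit explicitly.

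For (iii) there is a mild difference in packaging. The paper identifies $h_{\mathbb{M}}(c)$ with the Call--Silverman canonical height $h_{f_c}(c)$ (via the local decomposition of the latter), and then quotes the general fact that $h_{f}(z)=0$ iff $z$ is preperiodic. You instead bypass the canonical height formalism: you bound the naive heights of the orbit points $f_c^n(0)$ directly (integral at finite places by (i), bounded at infinity because $\sigma(c)\in M$) and apply Northcott by hand. This is exactly the argument underlying the preperiodicity criterion for $h_f$, so the two routes are the same Northcott argument at different levels of abstraction; yours is more self-contained, the paper's is quicker given the machinery already in place.
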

         
         The collection $\mathbb{M}$ of the sets  $M_p$ for $p\in \mathcal{P}\cup\set{\infty}$ is called the {\em adelic Mandelbrot set}\index{Adelic>Mandelbrot set}\index{Mandelbrot set>adelic=(Adelic ---)}, and $h_\mathbb{M}$ will be referred to as the {\em parameter height function} associated to the critical point 0.

    \begin{proof}
    Using the ultrametric property, we see that $\abs{c}_p \leq 1$ implies that 
    $\abs{f_c(c)}_p = \abs{c^2+c}_p\leq 1$. Conversely  $\abs{c}_p > 1$ implies   
    $\abs{c^2+c}_p = \abs{c}_p^2$ hence $f_c^n(c)\to \infty$. This proves {\em (i)} and   {\em (ii)}. 
    
 For the last assertion, we observe that    the canonical height of $f_c$ is  given by 
 $$h_{f_c}(z)   = \unsur{[\Q(z):\Q]} \sum_{p\in \mathcal{P}\cup\set{\infty}} \sum_{\sigma: \Q(z) \hookrightarrow \cc_p}
  G_{f_c, p}(\sigma(z)).$$ Indeed it is clear from this formula that $h_{f_c}(z) = 2 h_{f_c}(z)$ and $h_{f_c}- h_{\rm naive} =
   O(1)$ so the result follows from the Call-Silverman Theorem \ref{thm:call silverman}. Recall that 
   $h_{f_c} (z) = 0$ if and only if $z$ is preperiodic. Assertion {\em (iii)} follows from these 
   properties by simply plugging $c$ into the formulas. 
    \end{proof}

 An {\em adelic set}\index{Adelic>set} (this terminology is due to Rumely)
 $\mathbb{E} = \set{E_p, p\in \mathcal{P}\cup\infty}$ is a collection of sets $E_p\subset \cc_p$ such that 
 \begin{itemize}
 \item $E_\infty$ is a full compact set in $\cc$;
 \item for every $p\in \mathcal{P}$, $E_p$ is closed and bounded in $\cc_p$, and $E_p$ is the closed unit ball for 
 all but finitely many $p$;
 \item for every $p\in \mathcal{P}\cup\infty$, $E_p$ admits a Green function $g_p$ 
 that is continuous on $\cc_p$ and satisfies $E_p = \set{g_p = 0}$, 
 $g_p (z) = \log^+\abs{z}_p - c_p + o(1)$ when $z\to \infty$, and $g_p$ is ``harmonic''\footnote{We do not define precisely what ``harmonic'' means in the $p$-adic context: roughly speaking it means that locally
 $g = \log\abs{h}$ for some non-vanishing analytic function (see \cite[Chap. 7]{baker_rumely} for details on this notion). Note that for the adelic Mandelbrot set  
 we have $g_p  = \log^+\abs{\cdot}_p$ at non-Archimedean places.} outside $E_p$.
  \end{itemize}
The \emph{capacity} of an adelic  $\mathbb{E}$ is 
defined to be  $\gamma(\mathbb{E}) =\prod_{p\in \mathcal{P}\cup\infty} e^{c_p}$. We will typically assume that 
$\gamma(\mathbb{E}) =1$.
 Under these assumptions one defines a height function from the local Green functions  $g_p$
exactly as before  $$h_\mathbb{E} (z) =
\unsur{[\Q(z):\Q]} \sum_{p\in \mathcal{P}\cup\set{\infty}} \sum_{\sigma: \Q(z) \hookrightarrow \cc_p}
  g_p(\sigma(z)),$$
 and we have the following  equidistribution theorem (see \cite{autissier, acl} for details and references).

 \begin{thm}[Bilu, Rumely]
Let $\mathbb{E}$ be an adelic set
  such that $\gamma(\mathbb{E}) =1$, and $(x_n)\in \cc^\nn$
   be a sequence of points with disjoint Galois orbits $X_n$ and such that
 $h(x_n)$ tends to 0. Then the sequence of equidistributed probability measures on the sets $X_n$ converges weakly in 
 $\cc$ to the 
 potential-theoretic equilibrium measure of $E_\infty$. 
 \end{thm}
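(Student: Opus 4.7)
The plan is to extract a weak limit from the probability measures $\nu_n = \unsur{\#X_n} \sum_{y\in X_n}\delta_y$ (viewed on $\pu(\cc)$) and to show that it must coincide with the potential-theoretic equilibrium measure $\mu_\infty := \Delta g_\infty$ of $E_\infty$. The main tool is the variational characterization of $\mu_\infty$ as the unique probability measure supported on $E_\infty$ minimizing the logarithmic energy $I(\nu) = -\iint\log\abs{z-w}\, d\nu(z)\,d\nu(w)$, which we will control by means of the adelic height $h_\mathbb{E}$.

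\textbf{Rewriting the height as an adelic energy.} For each place $v$, introduce the $v$-adic potential $U_v^{\nu_n}(z) = \int \log\abs{z-w}_v\,d\nu_n(w)$ and recall that $g_v = U_v^{\mu_v} - c_v$ up to the normalization at $v$, where $\mu_v$ denotes the local equilibrium measure. By the product formula applied to the nonzero differences $x - y$ with $x,y\in X_n$ and $x\neq y$, the off-diagonal double sum $\sum_v \sum_{x\neq y}\log\abs{x-y}_v$ vanishes identically. This allows one to re-express $(\#X_n)\cdot h_\mathbb{E}(x_n)$ as an adelic integral $\sum_v \int g_v\,d\nu_n\,d\nu_n$, up to diagonal corrections which are controlled by the disjointness of the orbits $X_n$.

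\textbf{Support and energy comparison.} At every non-Archimedean place $v$ outside a fixed finite set, $E_v$ is the closed unit ball, and standard ultrametric estimates combined with the vanishing of $h_\mathbb{E}(x_n)$ force the Galois conjugates to lie asymptotically in $E_v$. Extracting a weakly convergent subsequence $\nu_{n_k}\rightharpoonup\nu$ on $\pu(\cc)$, the Archimedean contribution combined with the strict positivity of $g_\infty$ outside $E_\infty$ yields $\supp\nu\subset E_\infty$. Lower semicontinuity of the energy functional then gives $I(\nu) \leq \liminf_k I(\nu_{n_k}) \leq I(\mu_\infty)$; here the capacity normalization $\gamma(\mathbb{E}) = 1$ (equivalently $\sum_p c_p = 0$) is used precisely so that the smallness of $h_\mathbb{E}(x_n)$ bounds the Archimedean energy by the \emph{minimal} energy of $E_\infty$, with no surplus constant coming from the non-Archimedean places.

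\textbf{Conclusion and main obstacle.} Since $\mu_\infty$ is the unique minimizer of $I$ among probability measures supported on $E_\infty$, we must have $\nu = \mu_\infty$, and as this holds for every weakly convergent subsequence, the whole sequence $\nu_n$ converges weakly to $\mu_\infty$. The hard part is the diagonal regularization in the energy computation: the pairwise logarithm $\log\abs{x-y}_v$ is singular on the diagonal $x=y$, so one must truncate it (say replacing it by $\max(\log\abs{x-y}_v, -N)$), pass to the weak limit, and only then send $N\to\infty$. The disjointness of the $X_n$, together with Fekete-type lower bounds on $\min_{x\neq y\in X_n}\abs{x-y}_v$, is what makes these diagonal corrections uniformly negligible, and coordinating such estimates across infinitely many non-Archimedean places (while exploiting the fact that only finitely many contribute non-trivially) is the delicate heart of the argument.
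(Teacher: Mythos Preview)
The paper does not actually prove this theorem: it is stated as a black box, with a reference to Chambert-Loir's lectures for ``details and references,'' and is then immediately applied to deduce Theorem~7.2. So there is no proof in the paper to compare your proposal against.

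That said, your outline is the standard Bilu--Szpiro--Ullmo--Zhang strategy (product formula plus energy minimization), and it is correct in its broad architecture. Two points deserve correction.

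First, your description of the diagonal regularization is off. You invoke ``Fekete-type lower bounds on $\min_{x\neq y\in X_n}\abs{x-y}_v$,'' but no such lower bounds exist or are needed. The actual mechanism is cardinality: once you truncate $-\log\abs{x-y}$ at level $N$, the diagonal contributes at most $N\cdot\#X_n$ to a sum with $(\#X_n)^2$ terms, hence $O(N/\#X_n)$ after normalization. This tends to $0$ for fixed $N$ because $\#X_n\to\infty$; the latter follows from Northcott (bounded $\#X_n$ means bounded degree, and small height then forces finitely many points, contradicting the disjointness of the $X_n$). Relatedly, the line $I(\nu)\leq\liminf_k I(\nu_{n_k})$ is literally meaningless since $I(\nu_{n_k})=+\infty$ for atomic measures; you must work throughout with the truncated (or off-diagonal) energy and only let $N\to\infty$ at the end, as you say in your final paragraph.

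Second, the sentence ``standard ultrametric estimates \dots\ force the Galois conjugates to lie asymptotically in $E_v$'' is not the right formulation. What one actually uses is the non-negativity $g_v\geq 0$ at every place; since $h_{\mathbb E}(x_n)=\sum_v\int g_v\,d\nu_n\to 0$ and each summand is non-negative, one gets $\int g_\infty\,d\nu_n\to 0$, and continuity of $g_\infty$ then forces any weak limit $\nu$ to be supported on $\{g_\infty=0\}=E_\infty$. The non-Archimedean places enter only through the product-formula cancellation of the off-diagonal logarithms and the non-negativity of their energy contributions; one does not need to locate the conjugates inside $E_v$.
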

 
 This convergence also holds at finite places, provided one is able to make sense of measure theory in this context. Theorem \ref{thm:equidist mandelbrot} follows immediately.

   \section{Higher degree polynomials and equidistribution}\label{sec:equidist higher}
   
   In this and the next section we investigate the asymptotic distribution of special points in  spaces of higher degree polynomials.  
   The situation is far less understood for spaces of rational functions. Polynomials of the form
    $P(z)  = \sum_{k=0}^d a_k z^k$
   can be obviously parameterized by 
   their coefficients so that the space $\poly_d$ of polynomials of degree $d$ is $\cc^*\times \cc^d$. By an affine conjugacy we can 
   arrange   that $a_d= 1$ and $a_{d-1} = 0$ (monic and centered polynomials).
   
    Assume now that $P$ and $Q$ are 
   monic,
   centered and conjugate by the affine transformation $z\mapsto az+b$, that is, $P(z) =  a\inv Q(az+b) - ba\inv$. Since 
  $P$ and $Q$ are monic we infer that $a^{d-1} = 1$ and from the centering we get $b=0$. It follows that the space $\mpoly_d$
  of polynomials of degree $d$
  modulo affine conjugacy is naturally isomorphic to 
  $\cc^{d-1} /  \langle \zeta \rangle$, where $\zeta  = e^{2i\pi/(d-1)}$. In practice it is easier to 
  work on its $(d-1)$-covering  by $\cc^{d-1}$.  
  
   \subsection*{Special points}
The special points\index{Special>points} in $\mpoly_d$ are the post-critically finite maps. They are dynamically natural since classical results of 
Thurston, Douady-Hubbard and others (see e.g. \cite{douady_hubbard_thurston,hubbard})
 show that  the geography of $\mpoly_d$  is somehow 
organized around them. 

 \begin{prop}\label{prop:PCF} \ 
 
 \begin{enumerate}[(i)]
 \item The set $\mathrm{PCF}$ of   \pcf polynomials is countable and Zariski-dense 
 in $\mpoly_d$.
 \item $\mathrm{PCF}$  is  
 relatively compact in the usual topology.    
 \end{enumerate}
  \end{prop}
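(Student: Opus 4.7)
The plan is to treat countability, relative compactness, and Zariski density separately. The first two follow from the equational definition of \pcf combined with classical potential theory, while Zariski density is the main obstacle.

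\textbf{Countability.} After passing to a branched cover of $\mpoly_d$ to mark critical points if necessary, a polynomial $f\in\mpoly_d$ is \pcf iff each of its $d-1$ critical points $c_i(f)$ satisfies $f^{n_i}(c_i)=f^{k_i}(c_i)$ for some integers $0\le k_i<n_i$. For each tuple $\mathbf{N}=(n_i,k_i)_{i=1}^{d-1}$, this cuts out an algebraic subvariety $V_\mathbf{N}\subset\mpoly_d$ defined by $d-1$ polynomial equations, and $\mathrm{PCF}=\bigcup_\mathbf{N}V_\mathbf{N}$. Granting (ii), each $V_\mathbf{N}$ is contained in the bounded set $\mathrm{PCF}$. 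Since every positive-dimensional affine algebraic variety in $\cc^{d-1}$ is Euclidean-unbounded, $V_\mathbf{N}$ must be $0$-dimensional and hence finite, yielding countability.

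\textbf{Relative compactness.} Let $f(z)=z^d+a_{d-2}z^{d-2}+\cdots+a_0$ be \pcf. Every critical orbit is bounded, so every $c_i(f)$ lies in the filled Julia set $K_f$, which is therefore connected. For $f$ monic, $K_f$ has logarithmic capacity $1$, hence diameter at most $4$ by a classical Koebe-type estimate applied to the inverse B\"ottcher coordinate. The centered condition $a_{d-1}=0$ gives $\sum_i c_i(f)=0$, so $|c_i|\le 4(d-2)/(d-1)<4$ by averaging, and $K_f\subset D(0,8)$. The critical points determine $a_{d-2},\ldots,a_1$ as multiples of their elementary symmetric functions via $f'=d\prod_i(z-c_i)$, while $a_0$ is recovered from $f(c_1)\in K_f\subset D(0,8)$. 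Thus the coefficients of $f$ are uniformly bounded over $\mathrm{PCF}$, yielding relative compactness.

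\textbf{Zariski density.} This is the main difficulty: a single application of Proposition \ref{prop:levin} only produces parameters where one critical point is preperiodic, not all $d-1$ simultaneously. The plan is to use the bifurcation-current formalism. For each marked $c_i$, the function $G_i(\la):=G_{f_\la}(c_i(\la))$ is continuous and plurisubharmonic on $\mpoly_d$, and the positive closed $(1,1)$-current $T_i:=dd^c_\la G_i$ has support equal to the activity locus of $c_i$. By continuity of the $G_i$, Bedford--Taylor theory lets one define the wedge product $\mubif:=T_1\wedge\cdots\wedge T_{d-1}$ as a positive Borel measure on $\mpoly_d$. One then proves (a)~$\supp(\mubif)\subset\overline{\mathrm{PCF}}$, by inductively restricting to subvarieties on which $c_1,\ldots,c_{i-1}$ are held preperiodic via Proposition \ref{prop:levin} and showing $c_i$ remains active there, and (b)~$\mubif\not\equiv 0$, e.g.\ by exhibiting a transverse intersection of the $T_i$ at an explicit \pcf map or by a limiting argument from equidistribution along $\percrit(n,k)$. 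Together (a) and (b) force $\overline{\mathrm{PCF}}$ to contain a set of full real dimension $2(d-1)$, so $\overline{\mathrm{PCF}}$ cannot lie inside any proper Zariski-closed subset of $\mpoly_d$. The delicate point is (a): tracking how preperiodicity of one critical point can be maintained while activating the next requires pluripotential theory in an essential way.
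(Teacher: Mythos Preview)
Your countability argument is essentially the paper's: a countable union of affine varieties, each bounded by (ii) and hence finite.

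For relative compactness you take a genuinely different and more elementary route. The paper works in the Branner--Hubbard coordinates $(c,a)$ and invokes the asymptotic $G(c,a)=\log^+\max(|a|,|c_i|)+O(1)$ (Theorem~\ref{thm:branner hubbard}) to conclude that the connectedness locus $\{G=0\}$ is compact. Your argument via $\mathrm{cap}(K_f)=1$, the Koebe diameter bound, and the centering condition $\sum c_i=0$ is cleaner and avoids citing Branner--Hubbard, though the paper's version yields precise asymptotics that are reused in the equidistribution section.

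For Zariski density the two approaches diverge more substantially. The paper presents the Baker--DeMarco inductive argument: given a proper subvariety $S$, one shows the first critical point $c_0$ is active somewhere on $X\setminus S$ (using Theorem~\ref{thm:passive curve}, the McMullen/Dujardin--Favre dichotomy for passive critical points on quasiprojective families), makes it preperiodic via Proposition~\ref{prop:levin}, restricts to the resulting codimension-$\le 1$ subvariety, and repeats for $c_1,\ldots,c_{d-2}$. Dimension count guarantees a \pcf point outside $S$. This is self-contained modulo Theorem~\ref{thm:passive curve}. Your pluripotential sketch via $\mubif=T_1\wedge\cdots\wedge T_{d-1}$ is the approach of \cite{bassanelli_berteloot, preper}, which the paper explicitly cites but chooses not to develop. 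Your outline is correct in spirit, but both (a) $\supp(\mubif)\subset\overline{\mathrm{PCF}}$ and (b) $\mubif\not\equiv 0$ are nontrivial theorems rather than steps one can fill in on the spot; in particular your description of (a) as ``inductively restricting to subvarieties via Proposition~\ref{prop:levin}'' is closer to the Baker--DeMarco argument than to how \cite{preper} actually proves the support statement. So while your strategy points to a valid proof in the literature, as written it is an outline with the two hardest ingredients deferred, whereas the paper's route is a complete argument once Theorem~\ref{thm:passive curve} is granted.
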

 
 \begin{proof} For $d=2$ this follows from the properties of the Mandelbrot set so it is enough to deal with 
  $d\geq 3$. 
   To prove the result it is useful to work in the   space $\mpoly_c^{\mathrm{cm}}$
   of critically marked polynomials modulo affine conjugacy, which is a branched covering of $\mpoly_d$. 
   Again this space is singular so we   work on the following parameterization. For 
   $(c,a) = (c_1, \ldots , c_{d-2}, a)\in \cc^{d-1}$ we consider the polynomial $P_{c,a}$ 
   with critical points at $( c_0= 0,  ,c_1, \ldots , c_{d-2})$ and such 
   that $P(0)  = a^d$, that is $P_{c,a}$ is the primitive of $z\prod_{i=1}^{d-2}(z-c_i)$ such that $P(0)  = a^d$. This defines 
   a map $\cc^{d-1}\to \mpoly_c^{\mathrm{cm}}$ which is a branched cover of degree $d(d-1)$. 
   
  Let $G_{P_{c,a}}$ be the dynamical Green function of $P_{c,a}$, and define 
  $$G(c,a ) = \max \set{ G_{P_{c,a}}(c_i), \ i = 0 , \ldots , d-2}.$$ 
  The asymptotic behavior of $G$ is well understood. 
  
  \begin{thm}[Branner-Hubbard \cite{branner_hubbard}] \label{thm:branner hubbard}
  As $(c,a)\to \infty$  in $\cc^{d-1}$ we have  that 
  $$G(c, a)  = \log^+ \max \set{\abs{a},\abs{ c_i},   i = 1 , \ldots , d-2} + O(1).$$
   \end{thm}
   
 The seemingly curious normalization  $P(0) = a^d$ was motivated by this 
 neat expansion. See \cite{preper} for a proof using these 
 coordinates, based on explicit asymptotic expansions of the $P_{(c, a)}(c_i)$. 
   In particular $G(c, a) \to\infty$ as 
   $(c,a)\to \infty$ and it follows that the connectedness locus 
   $$\mathcal{C}:= \set{(c,a), \ K(P_{c,a}) \text{ is connected}} = \set{(c,a),\ G(c, a)  =0}$$ is
   compact. Since \pcf parameters belong to $\mathcal C$, this proves the second assertion of   Proposition \ref{prop:PCF}. 
   For the first one, note 
   that the set of \pcf parameters is defined by countably many algebraic equations in $\cc^d$, and since each component is
   bounded in $\cc^{d-1}$ it must be a point. 
   
  The Zariski density of \pcf polynomials is a direct consequence of the equidistribution 
  results of  \cite{bassanelli_berteloot, preper}, based on pluripotential theory. 
Here we present a simpler argument   due to 
  Baker and  DeMarco \cite{baker_demarco}.   This requires the following result, whose proof will be skipped. 
   
   \begin{thm}[McMullen \cite{mcm_algorithms}, Dujardin-Favre \cite{preper}] \label{thm:passive curve} 
  
   Let $(f_\la, c(\la))_{\la\in \La}$ be a holomorphic family of rational maps of degree $d$ with a marked critical point , parameterized
  by a quasiprojective variety $\La$. If $c$ is passive along $\La$, then:
  \begin{itemize}
  \item either the family is isotrivial, that is the $f_\la$ are conjugate by Möbius transformations
  \item or $c$ is persistently preperiodic, that is there exists $m<n$ such that 
  $f^m_\la(c(\la))\equiv f^n_\la(c(\la))$. 
  \end{itemize}
  \end{thm}
 
Let now  $S$ be any proper
algebraic subvariety of $\mpoly_d^{\mathrm{cm}} = : X$, we want to show that there exists a \pcf parameter in $X\setminus S$. Consider the first marked critical point $c_0$ on $\La = X\setminus S$. Since the family 
$(P_{c,a})$ is not isotrivial on $\La$, by the previous theorem, either it is persistently preperiodic or it must be active somewhere and by perturbation we can make it preperiodic 
by Proposition \ref{prop:levin}. In any case we can find $\la_0\in \La$ and $m_0<n_0$ such that 
   $f^{m_0}_\lo(c_0(\lo))\equiv f^{m_0}_\lo(c_0(\lo))$. Now define $\La_1$ to be the subvariety of codimension $\leq 1$
   of $X\setminus S$ where this 
   equation is satisfied. It is quasiprojective and $c_0$ is persistently preperiodic on it. We now consider the behaviour 
   of $c_1$ on $\La_1$ and continue inductively to get a nested sequence 
  of quasiprojective varieties $\La_k\subset X$ on which $c_0, \ldots , c_k$ are persistently preperiodic. 
 Since the dimension drops by at most 1 at each step and $\dim(X) = d-1$, we can continue until $k=d-2$ and 
 we finally find the desired parameter. 
 \end{proof}
  
  \subsection*{Equidistribution of special points} 
 Pluripotential theory (see e.g. \cite[Chap. III]{agbook}) allows 
  to give a meaning to the   exterior product $\lrpar{\frac{i}{\pi}\fr\overline \fr G} ^{\wedge (d-1)}$. 
  This  defines a probability measure with compact support in $\mpoly_d^{\mathrm{cm}} $ which 
      will be referred to as the {\em bifurcation measure}\index{Bifurcation>measure}\index{Measure>bifurcation=(Bifurcation ---)}, 
   denoted by  $\mu_{\rm bif}$.
  
  The next theorem asserts  that   \pcf parameters are asymptotically equidistributed.
  For $0\leq i\leq d-2$  and $m<n$ define the subvariety $\per_{c_i} (m,n)$ to be the closure 
  of the set of parameters at which  $f^k(c_i)$ is periodic exactly for $k\geq m$, with exact period $n-m$. 
  
  \begin{thm}[Favre-Gauthier \cite{favre_gauthier}]  \label{thm:favre gauthier} Consider a  
  $(2d-2)$-tuple of sequences of 
 integers   $((n_{k,0}, m_{k,0}), \ldots , (n_{k,d-2}, m_{k,d-2}))_{k\geq 0}$  such that: 
  \begin{itemize}
  \item either the $m_{k,i}$ are equal to 0 and for fixed $k$ the $n_{k,i}$ are distinct and $\min_i (n_{k,i} )$ tends to $
   \infty$ with $k$;
  \item or for every $(k,i)$, $n_{k,i} > m_{k, i} >0$ and    $\min_i (n_{k,i}- m_{k,i}) \to \infty$ when $k\to \infty$. 
  \end{itemize}
  Then, letting $Z_k = \per_{c_0}(m_{k,0}, n_{k,0}) \cap \cdots \cap  \per_{c_{d-2}}(m_{k,d-2}, n_{k,d-2})$, the sequence of probability measures uniformly distributed on $Z_k$   converges to the  bifurcation measure 
 as $k\to \infty$. 
   \end{thm}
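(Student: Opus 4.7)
The plan is to deduce the theorem from a multivariable version of arithmetic equidistribution (Yuan's theorem for sums/intersections of semipositive adelic metrized line bundles) applied to the parameter space $X=\mpoly_d^{\cm}$, simultaneously for each of the $d-1$ marked critical points. For each $i\in\set{0,\ldots,d-2}$ and each place $p\in \mathcal{P}\cup\set{\infty}$, I would define a local Green function on $\cc_p^{d-1}$ by
\[ g_{i,p}(c,a)  = \lim_{n\to\infty} d^{-n}\log^+\abs{P_{c,a}^n(c_i)}_p, \]
using the non-Archimedean construction of Theorem \ref{thm:green NA} when $p$ is finite. By Theorem \ref{thm:branner hubbard} (Archimedean) and its non-Archimedean analogue obtained by the same telescoping argument, $g_{i,p}$ is continuous, non-negative, and equals $\log^+\max(\abs{a}_p,\abs{c_j}_p)$ up to a bounded error that vanishes identically for all but finitely many $p$. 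This assembles into a semi-positive continuously metrized adelic line bundle $\overline L_i$ on a projective compactification of $X$, whose associated height $h_i:X(\overline \Q)\to\re_+$ vanishes exactly on parameters where $c_i$ is preperiodic. In particular, since any $\la\in\per_{c_i}(m,n)$ makes $c_i(\la)$ preperiodic under $P_\la$, every point of $Z_k$ simultaneously satisfies $h_i(\la)=0$ for all $i$, so the total adelic height $h_0+\cdots+h_{d-2}$ vanishes on $Z_k$.

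The heart of the proof is then an application of Yuan's equidistribution theorem for $0$-dimensional small-height cycles on the $(d-1)$-dimensional projective variety $X$ equipped with $d-1$ semipositive adelic line bundles $\overline L_0,\ldots,\overline L_{d-2}$. Provided the sequence $(Z_k)$ is \emph{generic}, i.e. no infinite subsequence is contained in a fixed proper subvariety of $X$, the uniform probability measures on the Galois orbits making up $Z_k$ converge weakly in $X(\cc)$ to the mixed Monge-Amp\`ere measure $dd^c g_{0,\infty}\wedge\cdots\wedge dd^c g_{d-2,\infty}$. By results of Bassanelli-Berteloot and Dujardin-Favre, this wedge product is proportional to $\mubif=(dd^c G)^{\wedge(d-1)}$, where $G=\max_i g_{i,\infty}$ is the function of Theorem \ref{thm:branner hubbard}: both are top-degree positive currents supported on the common activity locus of all critical points, and the identification up to a combinatorial constant follows from the standard calculus of bifurcation currents; normalizing to a probability measure yields the theorem.

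The main obstacle is establishing genericity of $(Z_k)$. If some infinite subsequence were contained in a proper irreducible subvariety $Y\subset X$, then for some $i$ the critical point $c_i$ would be passive on $Y$ while being forced, along the subsequence, into periodic orbits of arbitrarily long period. By Theorem \ref{thm:passive curve}, this forces either persistent preperiodicity of $c_i$ on $Y$ or isotriviality of the restricted family (the latter being ruled out since our family of polynomials is non-isotrivial for $d\geq 3$). Iterating this dichotomy on the remaining critical points, $Y$ must drop in dimension at each step, so after at most $d-1$ reductions one reaches a $0$-dimensional stratum on which all critical points are persistently preperiodic with fixed preperiodic type; this contradicts the hypothesis $\min_i(n_{k,i}-m_{k,i})\to\infty$ that the periods diverge along the subsequence. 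A subsidiary technical issue is to interpret the $\overline L_i$ as adelic metrized line bundles on a smooth projective compactification of $X$ to which Yuan's theorem applies, which is again handled via the uniform Branner-Hubbard expansion of Theorem \ref{thm:branner hubbard}.
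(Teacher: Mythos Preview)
Your overall strategy coincides with the paper's: define adelic parameter heights vanishing on $Z_k$, invoke Yuan's equidistribution theorem, and identify the limiting Monge--Amp\`ere measure with $\mubif$. The identification $dd^c g_{0,\infty}\wedge\cdots\wedge dd^c g_{d-2,\infty}=\mubif$ is indeed a known fact from the theory of bifurcation currents, so that part is fine. The genuine gap is in your genericity argument.

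You assert that if infinitely many points of the $Z_k$ lie on a proper irreducible subvariety $Y$, then ``for some $i$ the critical point $c_i$ would be passive on $Y$.'' This inference is unjustified and in general false: on a proper subvariety of $\mpoly_d^{\cm}$ all $d-1$ critical points may very well be active. For instance the special curve $\set{P(c_0)=P(c_1)}$ in $\mpoly_3^{\cm}$ carries a non-trivial bifurcation locus for each critical point. Theorem~\ref{thm:passive curve} only gives information \emph{once} passivity is already known; it cannot be used to deduce passivity. So your inductive dimension drop never starts, and the strict genericity condition~(ii) of Theorem~\ref{thm:yuan} is left unproved. (The paper in fact remarks that condition~(ii) is problematic precisely because special subvarieties contain infinitely many \pcf parameters.)

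The paper handles this by abandoning condition~(ii) altogether and establishing instead the asymptotic condition~(ii') of Corollary~\ref{cor:yuan}: for every fixed hypersurface $H$ over $\Q$, one shows $\#(H\cap Z_k)/\#Z_k\to 0$. The decisive ingredient, which does not appear in your proposal, is the Buff--Epstein transversality theorem (Theorem~\ref{thm:buff epstein}): at every point of $Z_k$ the hypersurfaces $\per_{c_i}(n_{k,i})$ are smooth and mutually transverse. This gives the exact B\'ezout count $\#Z_k=d^{\sum_i n_{k,i}}$ without multiplicities; and since at any smooth point of $H$ lying in $Z_k$ the hypersurface $H$ must be transverse to at least $d-2$ of the $\per_{c_i}(n_{k,i})$, one bounds $\#(\mathrm{Reg}(H)\cap Z_k)$ by the sum over $i$ of $\deg(H)\cdot d^{(\sum_j n_{k,j})-n_{k,i}}$, which is $o(\#Z_k)$. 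Iterating on $\mathrm{Sing}(H)$ finishes the estimate. Without Buff--Epstein the counting argument collapses, so this is the step you are missing.
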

  
 This   puts forward the bifurcation measure as the natural analogue in higher degree of the harmonic measure of 
 the Mandelbrot set. It was first defined and studied in \cite{bassanelli_berteloot, preper}. 
 It follows from its pluripotential-theoretic construction that $\mu_{\rm bif}$ carries  no mass on analytic sets. 
 In particular this gives another argument for the Zariski density of  \pcf parameters . 
  
  The result is a consequence of arithmetic equidistribution. 
  Using the function $G(c,a)$ and its adelic analogues, we can define as before a  parameter height function
  on $\mpoly_d^{\mathrm{cm}}( \overline \Q)$
  satisfying $h(c,a)=0$ iff 
  $P_{c,a}$ is critically finite. Then  Yuan's equidistribution theorem for points of small height
  (see \cite{acl}) applies in this situation --this requires 
  some non-trivial work on  understanding the properties of $G$ at infinity. Specialized 
  to our setting it takes the following form.
    
  \begin{thm}[Yuan]\label{thm:yuan}
   Let $Z_k\subset \mpoly_d^{\mathrm{cm}}( \overline \Q)$ be a sequence of Galois invariant subsets such that:
   \begin{enumerate}[(i)]
   \item $\displaystyle h(Z_k) = \unsur{\# Z_k}\sum_{x\in Z_k} h(x) \underset{k\to \infty}\longrightarrow 0$;
   \item For every algebraic hypersurface $H$ over $\Q$, 
   $H\cap Z_k$ is empty for large enough $k$.
   \end{enumerate}
   Then $   \unsur{\# Z_k}\sum_{x\in Z_k}  \delta_x $ converges to the  bifurcation measure $\mu_{\rm bif}$ as $k\to \infty$. 
   \end{thm}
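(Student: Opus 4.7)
The plan is to read this statement as a direct specialization of Yuan's arithmetic equidistribution theorem for small points on a quasiprojective variety, so that the substance of the proof lies not in the convergence machinery itself but in organizing the family of Green functions $G_v$ (one per place of $\Q$) alluded to in the paragraph preceding the statement into a single continuous semi-positive adelic metric on a line bundle over a projective compactification of $X := \mpoly_d^{\mathrm{cm}}$.

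First I would fix a projective compactification $\overline X$ of $X$ and a big line bundle $L$ on $\overline X$ carrying a natural continuous adelic metric $\set{\norm{\cdot}_v}$ whose local weights are the Green functions $G_v(c,a) = \max_i G_{P_{c,a},v}(c_i)$, with associated height function equal to the parameter height $h$ built in the paragraph just before the statement. At every non-Archimedean place, $G_v$ coincides with $\log^+$ of a norm off a bounded locus, so the metric is the trivial one at all but finitely many $v$, and continuity and semi-positivity are inherited from the fact that $G_v$ is a locally uniform decreasing limit of pull-backs of model non-negative functions under iteration of $P_{c,a}$. At the Archimedean place, continuity and plurisubharmonicity of $G$ in $(c,a)$ are standard, and the central technical input is the Branner--Hubbard asymptotic of Theorem \ref{thm:branner hubbard}, which gives $G(c,a) = \log^+\max(\abs{a},\abs{c_i}) + O(1)$ as $(c,a)\to \infty$. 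This estimate is exactly what is needed to extend the Archimedean metric across the boundary divisor of $\overline X$ continuously and to compute the top self-intersection of $L$ in terms of a tractable reference model.

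With this semi-positive adelic metric in place, Yuan's theorem applies verbatim. Hypothesis (i), $h(Z_k)\to 0$, is the small-height hypothesis; hypothesis (ii), avoidance of every $\Q$-hypersurface for $k$ large, is the generic-sequence condition preventing the limit from concentrating on a proper subvariety. Yuan's conclusion then yields weak convergence of $\unsur{\#Z_k}\sum_{x\in Z_k}\delta_x$ to the normalized Monge--Amp\`ere measure $c_1(L,\norm{\cdot}_\infty)^{\wedge(d-1)}/\deg_L(\overline X)$ at the Archimedean place. It remains to identify this Monge--Amp\`ere mass with the bifurcation measure $\mubif$, which follows by unwinding definitions: up to normalization $c_1(L,\norm{\cdot}_\infty) = \frac{i}{\pi}\partial\overline\partial G$, so its $(d-1)$-fold self-intersection is precisely $\mubif$, provided the degree constant matches the total mass $1$ assumed for $\mubif$.

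The main obstacle is the construction of the compactification together with the extension of the Archimedean metric: one must verify that the candidate adelic metric is continuous and semi-positive on $\overline X$ itself, not just on $X$, and that the resulting global height on closed points coincides with the concretely defined $h$. The Branner--Hubbard expansion is the indispensable ingredient that makes this succeed, but turning it into an honest semi-positive adelic metric on a projective variety and justifying that Yuan's theorem applies in this form is precisely the ``non-trivial work on understanding the properties of $G$ at infinity'' mentioned just before the statement.
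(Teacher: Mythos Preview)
The paper does not actually prove this theorem: it is stated as a specialization of Yuan's general arithmetic equidistribution theorem, with a reference to \cite{acl} and the remark that applying it here ``requires some non-trivial work on understanding the properties of $G$ at infinity.'' Your proposal is a faithful outline of precisely that non-trivial work---packaging the local Green functions into a continuous semi-positive adelic metric on a big line bundle over a compactification, invoking the Branner--Hubbard expansion (Theorem~\ref{thm:branner hubbard}) to control the Archimedean metric at the boundary, and identifying the resulting Monge--Amp\`ere measure with $\mubif$---so you have essentially supplied the sketch the paper deliberately omits.

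One small caution: you assert that at non-Archimedean places the metric is trivial at all but finitely many $v$; this is true but deserves a word of justification (the coefficients of $P_{c,a}$ are $v$-integral for almost all $v$, so the lift has good reduction and $G_v = \log^+\norm{\cdot}_v$ there). Also, the identification of the degree constant with the total mass of $\mubif$ is not automatic and is part of the ``non-trivial work''; in practice one computes the volume of the adelically metrized line bundle and checks it is positive, which is what makes $L$ big and the normalization consistent.
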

   
    Actually in the application to \pcf maps 
   the genericity assumption {\em (ii)} is not   satisfied. Indeed we shall see in the next section that there are ``special subvarieties'' 
   containing infinitely many \pcf parameters.  Fortunately   the following variant is true. 
    
 \begin{cor}\label{cor:yuan}
 In Theorem \ref{thm:yuan}, if (ii) is replaced by the weaker condition: 
   \begin{itemize} 
   \item[{\it (ii')}] For every algebraic hypersurface $H$ over $\Q$, $\displaystyle \lim_{k\to\infty} \frac{\# H\cap Z_k}{\#Z_k} = 0$,
 \end{itemize}
 then the same conclusion holds. 
 \end{cor}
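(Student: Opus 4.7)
My plan is to reduce to Theorem \ref{thm:yuan} by a diagonal extraction: I will construct Galois-invariant subsets $Z_k'\subset Z_k$ with $\#Z_k'/\#Z_k\to 1$ that do satisfy the strong genericity assumption (ii), and then transfer the equidistribution back from $Z_k'$ to $Z_k$.

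The main observation is that the irreducible algebraic hypersurfaces of $\mpoly_d^{\cm}$ defined over $\Q$ correspond, up to scalar, to irreducible polynomials with coefficients in $\Q$, hence form a countable family that I can enumerate as $H_1, H_2,\ldots$. For each fixed $n$ hypothesis (ii') applied to $H_1,\ldots,H_n$ gives
$$\frac{\#\bigl(Z_k \cap (H_1\cup\cdots\cup H_n)\bigr)}{\#Z_k}\,\leq\, \sum_{m=1}^{n}\frac{\#(Z_k\cap H_m)}{\#Z_k}\xrightarrow[k\to\infty]{}0.$$
A standard diagonal argument then produces an increasing sequence $k_1<k_2<\cdots$ such that the above ratio is bounded by $1/n$ for every $k\geq k_n$. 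Setting $j(k)=\max\{n\colon k\geq k_n\}$ I get $j(k)\to\infty$ and I define
$$Z_k' \,=\, Z_k\setminus \bigl(H_1\cup\cdots\cup H_{j(k)}\bigr).$$
By construction $\#Z_k'/\#Z_k\to 1$, and $Z_k'$ is still Galois invariant because each $H_i$ is $\Q$-rational.

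Next I would check that $Z_k'$ fits into the hypotheses of Theorem \ref{thm:yuan}. For condition (i), since the parameter height $h$ is non-negative (being a sum of non-negative local Green functions coming from the function $G(c,a)$ of Theorem \ref{thm:branner hubbard} and its $p$-adic analogues), one has
$$h(Z_k')\,=\,\frac{1}{\#Z_k'}\sum_{x\in Z_k'}h(x)\,\leq\,\frac{\#Z_k}{\#Z_k'}\,h(Z_k)\xrightarrow[k\to\infty]{}0.$$
For condition (ii), any hypersurface $H$ defined over $\Q$ decomposes as a finite union $H_{n_1}\cup\cdots\cup H_{n_r}$ of irreducibles from the enumeration, so as soon as $k$ is large enough that $j(k)\geq \max_i n_i$, one has $Z_k'\cap H=\emptyset$. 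Theorem \ref{thm:yuan} then yields $\frac{1}{\#Z_k'}\sum_{x\in Z_k'}\delta_x\to\mubif$, and since $\#Z_k'/\#Z_k\to 1$ the two averaged measures on $Z_k$ and on $Z_k'$ differ by a signed measure of total mass tending to $0$; hence $\frac{1}{\#Z_k}\sum_{x\in Z_k}\delta_x\to \mubif$ as well.

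The delicate step I expect is the diagonal construction of $j(k)$: it must be chosen slowly enough that $j(k)\to\infty$ and yet $\#Z_k'/\#Z_k\to 1$ simultaneously. This is exactly where the gap between (ii) and (ii') is bridged, and it works only because (ii') is applied uniformly to each of the finite unions $H_1\cup\cdots\cup H_n$. A minor auxiliary point is that weak convergence of probability measures on the non-compact space $\mpoly_d^{\cm}(\cc)$ is unambiguous in the present setting because Northcott-type bounds confine sets of bounded height to a fixed compact region of parameter space.
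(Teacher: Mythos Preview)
Your proof is correct and follows essentially the same diagonal-extraction strategy as the paper: enumerate the $\Q$-hypersurfaces, successively delete their intersections with $Z_k$ to obtain Galois-invariant subsets satisfying the strong genericity hypothesis (ii) of Theorem~\ref{thm:yuan}, apply Yuan, and transfer the equidistribution back. The only cosmetic difference is that the paper fixes an $\varepsilon>0$, builds $Z_k^{(\infty)}$ with $\#Z_k^{(\infty)}\geq(1-\varepsilon)\#Z_k$, and lets $\varepsilon\to 0$ at the end, whereas you perform a single diagonal and obtain $\#Z_k'/\#Z_k\to 1$ directly; your packaging is arguably slightly cleaner.
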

 
 \begin{proof}[Proof of the corollary]
 This is based on a diagonal extraction argument. First, enumerate all hypersurfaces   defined over $\Q$ to form 
 a sequence  $(H_q)_{q\geq 0}$. Fix $\e>0$. 
 For $q=0$ we have  that $\frac{\# H_0\cap Z_k}{\#Z_k} \to 0$ as $k\to \infty$ so if for $k\geq k_0$ we remove from 
 $Z_k$ the (Galois invariant) set of points belonging to $H_0$ to  get a subset   $Z_k^{(0)}$ such that 
 $$Z_k^{(0)}\cap H_0 = \emptyset \text{ and } \frac{\# Z_k^{(0)}}{\#Z_k} \geq 1 - \frac{\e}{4}.$$ For $k\leq k_0$ we put 
 $Z_k^{(0)} = Z_k$. 
 
 Now for $H_1$ we do the same. There exists $k_1>k_0$ and for $k\geq k_1$ a Galois invariant subset 
 $Z_k^{(1)}$ extracted from $Z_k^{(0)}$ such that 
 $$Z_k^{(1)}\cap H_1 = \emptyset \text{ and } \frac{\# Z_k^{(1)}}{\#Z_k^{(0)}} \geq 1 - \frac{\e}{8}.$$ 
 For $k< k_0$ we set $Z_k^{(1)} = Z_k$ and for  $k_0\leq k < k_1$ we set $Z_k^{(1)} = Z_k^{(0)}$
 Note that  for $k\geq k_0$, $Z_k^{(1)}\cap H_0 = \emptyset$. 
 
 Continuing inductively this procedure, for every $q\geq 0$ we get a sequence of subsets $(Z_k^{(q)})_{k\geq 0}$ 
 with 
 $Z_k^{(q)} \subset Z_k$   such that 
 $$\# Z_k^{(q)} \geq \prod_{j=0}^q \lrpar{1 - \frac{\e}{2^{j+2}}} \# Z_k \geq (1-\e) \# Z_k$$ and $ Z_k$ is disjoint from $H_0,\ldots 
 , H_q$ for $k \geq k_q$.
 Finally we define $Z_k^{(\infty)}   = \bigcap_q Z_k^{(q)}$, which satisfies that for every $q$ and every $k\geq k_q$, 
 $Z_k^{(\infty)} $ is disjoint from $H_0,\ldots 
 , H_q$ and for every $k \geq 0$, $\#  Z_k^{(\infty)}  \geq (1-\e) \# Z_k$.  Therefore $Z_k^{(\infty)} $
satisfies the assumptions of Yuan's Theorem so  
$$ \mu_k^{(\infty)}:=   \unsur{\# Z_k^{(\infty)}}\sum_{x\in Z_k^{(\infty)}}  \delta_x 
\underset{k\to\infty}\longrightarrow \lrpar{\frac{i}{\pi}\fr\overline \fr G} ^{\wedge (d-1)}.$$ Finally if we let $\mu_k$ be the uniform measure on $Z_k$, we have that for every continuous function 
$\varphi$ with compact support, 
$$\abs{\mu_k^{(\infty)} (\varphi)  - \mu_k  (\varphi)  }\leq 2\e\norm{\varphi}_{L^\infty}$$ and we conclude that
$\mu_k$ converges to the bifurcation measure as well. This finishes the proof. 
 \end{proof}
 
 \begin{proof}[Proof of Theorem \ref{thm:favre gauthier}]
 We treat   the first set of assumptions $m_{k,i} =0$, and denote 
 put $\per_{c_i}(n_{k,i})= \per_{c_i}(0,n_{k,i})$. It is convenient to assume that the $n_{k, i}$ are prime numbers, which
 simplifies the issues about prime periods.  We have  to check that the hypotheses Corollary \ref{cor:yuan}
 are satisfied. First $X_k$ is defined by $(d-1)$ equations over $\Q$ so it is certainly Galois invariant, and it is a set of \pcf parameters so 
 its parameter height vanishes.  So the point is to check condition {\em (ii')}. For every $0\leq i \leq d-2$
 the variety $\per_{c_i}(n_{k,i})$ is 
 defined by the equation $P^{n_{k,i}}_{c,a} (c_i) - c_i = 0$ which is of degree $d^{n_{k,i}}$. 
 Recall from Proposition \ref{prop:PCF} that $Z_k$ is of dimension 0 and relatively compact in 
 $\mpoly_d^{\mathrm{cm}}$. Furthermore the analysis leading to Theorem 
 \ref{thm:branner hubbard} shows that these hypersurfaces do not intersect at infinity 
 so by Bézout's theorem the cardinality of $Z_k$ equals $d^{n_{k,0}+ \cdots + n_{k,d-1}}$ {\em counting multiplicities}. 
 Actually 
 multiplicities do not account, due to the following deep 
 result, which is based on dynamical and Teichmüller-theoretic techniques.
 
 \begin{thm}[Buff-Epstein \cite{buff_epstein}] \label{thm:buff epstein}
 For every $x\in Z_k$, the varieties 
  $\per_{c_0}(n_{k,0})$, ..., $\per_{c_{d-2}}( n_{k,d-2})$
  are smooth and transverse at $x$.  
 \end{thm}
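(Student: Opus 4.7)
The plan is to reduce the transversality statement to a rigidity property of a Thurston pullback operator on quadratic differentials. Fix $x \in Z_k$; by assumption each marked critical point $c_i$ is periodic of prime period $n_{k,i}$ under $P_x$, so every critical cycle is superattracting, i.e.\ has multiplier $0$. The hypersurface $\per_{c_i}(n_{k,i})$ is cut out near $x$ by the equation $P^{n_{k,i}}_{c,a}(c_i)-c_i=0$, which, up to a non-vanishing factor, is also the vanishing locus of the multiplier function $\rho_i(c,a):= (P^{n_{k,i}}_{c,a})'(c_i)$. Smoothness at $x$ is thus $d\rho_i(x)\neq 0$, and transversality is the linear independence of $d\rho_0,\ldots,d\rho_{d-2}$ inside $T_x^*\mpoly_d^{\cm}$. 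So everything boils down to showing that the joint multiplier map $M:(c,a)\mapsto (\rho_0,\ldots,\rho_{d-2})$ is a submersion at $x$.

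To compute $dM(x)$, I would invoke the Ahlfors--Bers/quasiconformal deformation framework. A tangent vector $v\in T_x\mpoly_d^{\cm}$ is represented by a Beltrami form $\mu$ supported away from the (finite) postcritical set $P_x$, and an explicit residue computation expresses $dM(x)\cdot v$ as a vector of integral pairings of $\mu$ against meromorphic quadratic differentials $q_i$ with simple poles concentrated on the $c_i$-cycle. Surjectivity of $dM(x)$ is then equivalent to the $q_i$ spanning the cotangent directions, which in turn can be reformulated as invertibility of the operator $\mathrm{id}-\sigma_{P_x}^*$ on the finite-dimensional space $Q(P_x)$ of meromorphic quadratic differentials on $\pu$ with at worst simple poles on $P_x$, where $\sigma_{P_x}^*$ is the Thurston pushforward/pullback associated to $P_x$.

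The main obstacle, and the genuine content of the theorem, is the spectral bound ensuring that $1$ is not an eigenvalue of $\sigma_{P_x}^*$. This is exactly Thurston's rigidity for the class of postcritically finite maps with hyperbolic orbifold: because $P_x$ is a polynomial, the fixed critical point at $\infty$ forces the associated orbifold to be hyperbolic, and one must rule out the classical obstructions (Levy cycles, in the polynomial setting) using the combinatorial structure of $P_x$. Once one has $\|\sigma_{P_x}^*\|<1$ on $Q(P_x)$, invertibility of $\mathrm{id}-\sigma_{P_x}^*$ is automatic, $dM(x)$ is surjective onto $\cc^{d-1}$, and both smoothness of each $\per_{c_i}(n_{k,i})$ at $x$ and their mutual transversality follow at once, since the tangent spaces to the $\per_{c_i}(n_{k,i})$ are exactly the preimages under $dM(x)$ of the coordinate hyperplanes in $\cc^{d-1}$.
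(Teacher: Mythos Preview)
The paper does not prove this statement: it is invoked as an external input from Buff--Epstein, with only the remark that it rests on ``dynamical and Teichm\"uller-theoretic techniques'', and is then used as a black box inside the proof of Theorem~\ref{thm:favre gauthier}. So there is no argument in the text to compare your proposal against, only the original reference.

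Your proposal, however, has a genuine gap at the very first reduction. You assert that the defining function $g_i(c,a)=P^{n_{k,i}}_{c,a}(c_i)-c_i$ agrees, up to a non-vanishing factor, with the multiplier $\rho_i(c,a)=(P^{n_{k,i}}_{c,a})'(c_i)$, and then build everything on the differentials $d\rho_i$. But $c_i=c_i(c,a)$ is a critical point of $P_{c,a}$ for \emph{every} parameter, so the chain rule gives
\[
(P^{n}_{c,a})'(c_i)\;=\;\prod_{j=0}^{n-1}P'_{c,a}\bigl(P^{j}_{c,a}(c_i)\bigr)\;=\;P'_{c,a}(c_i)\cdot(\cdots)\;=\;0
\]
identically on all of parameter space, not merely on $\per_{c_i}(n_{k,i})$. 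Hence $\rho_i\equiv 0$, $d\rho_i\equiv 0$, and the multiplier of the superattracting cycle carries no information whatsoever about the hypersurface $\{g_i=0\}$. The objects whose linear independence must actually be established are the $dg_i$ themselves, and there is no shortcut through multipliers.

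The second half of your outline is much closer to what Buff--Epstein actually do: one identifies the cotangent space to the moduli space at $x$ with a space of meromorphic quadratic differentials with at most simple poles on the postcritical set, expresses the relevant linear functionals (the $dg_i$, not the $d\rho_i$) in those terms, and shows that a linear dependence among them would produce a non-zero quadratic differential in the kernel of $\mathrm{id}-f_*$. Thurston rigidity for the hyperbolic-orbifold case (automatic for polynomials, thanks to the totally ramified fixed point at~$\infty$) then forbids this. So the Teichm\"uller-theoretic skeleton of your argument is correct; it is the specific bridge via multipliers that collapses and must be replaced by a direct analysis of the $dg_i$.
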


Finally, for every hypersurface $H$ we need to bound $\#H \cap Z_k$.  
Let $x\in H \cap Z_k$ and assume that $x$ is a regular point
of $H$. A first possibility is that 
 locally (and thus also globally) $H\equiv \per_{c_i}(n_{k,i})$ for some $i$. Since $n_{k,i}\to \infty$  this situation 
can happen only for finitely many $k$ so considering large enough $k$ we may assume that $H$  is distinct from the 
$\per_{c_i}(n_{k,i})$. By the transversality Theorem \ref{thm:buff epstein}, $H$ must be transverse to $(d-2)$ of the 
$\per_{c_i}(n_{k,i})$ at $x$ (this is the incomplete basis theorem!). So we can bound $\#H \cap Z_k$ by applying Bézout's theorem 
to all possible intersections of $H$ with $(d-2)$ of the 
$\per_{c_i}(n_{k,i})$, that is 
\begin{align*}
\#\mathrm{Reg} &(H) \cap Z_k    \\&\leq  \sum_{i=0}^{d-2} \# H \cap  \per_{c_0}(n_{k,0})\cap \cdots\cap \widehat{\per_{c_i}(n_{k,i})} \cap \cdots\cap
\per_{c_{d-2}}(n_{k,d-2}) \\
&\leq  \sum_{i=0}^{d-2} \deg(H) \cdot d^{\lrpar{\sum_{j=0}^{d-2} n_{k,j} }-n_{k,i}}   
= o\lrpar{d^{{\sum_{j=0}^{d-2} n_{k,j} }}}  = o\lrpar{\# Z_k}.
\end{align*}
To deal with the singular part of $H$, we write $\mathrm{Sing}(H) = \mathrm{Reg} (\mathrm{Sing}(H)) \cup\mathrm{Sing}(\mathrm{Sing}(H))$, and using the above argument in codimension 2, by we get a similar  estimate for 
$\#\mathrm{Reg} (\mathrm{Sing}(H)) \cap Z_k$. Repeating inductively this idea we finally 
conclude that $\#H \cap Z_k  =o\lrpar{\# Z_k}$, as required. 
 \end{proof} 
 
   \section{Special subvarieties}\label{sec:special}
   
      \subsection*{Prologue} 
    There are a number of situations in algebraic geometry where the following   happens:  an algebraic variety $X$ is given 
containing countably many ``special subvarieties''\index{Special>subvarieties} (possibly of dimension 0). Assume that a subvariety $Y$ admits a Zariski-dense subset of special points, then must it be special, too? Two famous instances of this problem are:
\begin{itemize}
\item torsion points on Abelian varieties (and the Manin-Mumford conjecture);
\item CM points on Shimura varieties (and the André-Oort conjecture).
\end{itemize}

The Manin-Mumford conjecture admits a dynamical analogue which will not be discussed in these notes. We will consider an analogue of the André-Oort conjecture which has been put forward by Baker and DeMarco \cite{baker_demarco}. Without entering into the details of what 
``André-Oort'' refers exactly to let us just mention one positive result which motivates the general conjecture. 
Let us identify the space of pairs of elliptic curves 
with $\cd$ via the $j$-invariant. 

\begin{thm}[André \cite{andre}]
Let $Y\subset \cd$ be an algebraic curve containing infinitely many points both coordinates of which are ``singular moduli'', that is, $j$-invariants of CM elliptic curves. Then $Y$ is special in the sense that $Y$ is either  a vertical or a horizontal line, or a modular curve $X_0(N)$. 
\end{thm}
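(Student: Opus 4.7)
The plan is to combine the Galois theory of singular moduli with an arithmetic equidistribution argument in the spirit of the preceding sections. First reduce to the case where $Y$ is irreducible, defined over a number field $K \subset \overline\Q$, and has finite projections $\pi_1, \pi_2 : Y \to \aa^1$ of some degree $d$; the horizontal and vertical cases are immediate, and an irreducible $Y$ containing infinitely many algebraic points of uniformly bounded degree is automatically defined over $\overline \Q$. Enumerate the given CM-CM points as $(x_n,y_n) = (j(E_n), j(E_n'))$, where $E_n, E_n'$ are CM by orders of discriminants $D_n, D_n' < 0$. Classical CM theory identifies the $\mathrm{Gal}(\overline\Q/\Q(\sqrt{D_n}))$-orbit of $x_n$ with the full set of singular moduli of discriminant $D_n$, of cardinality the class number $h(D_n)$, which goes to infinity with $\abs{D_n}$. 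Since $Y$ is defined over $K$, Galois conjugation permutes points of $Y$, and combining $[K(x_n,y_n):K(x_n)] \leq d$ with its symmetric analogue forces $h(D_n)$ and $h(D_n')$ to be comparable, up to a constant depending only on $K$ and $d$.

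The second step invokes arithmetic equidistribution. Either via Duke's theorem for Heegner points, or equivalently by applying a Yuan-type equidistribution statement to the adelic height on $\pu$ whose zero locus on $\overline \Q$-points is exactly the set of singular moduli, one obtains that the Galois orbit of $x_n$ equidistributes in $\cc$ as $\abs{D_n} \to \infty$ to the push-forward of the hyperbolic measure on the modular curve via the $j$-function. Transferring this to $Y$ through the counting measure on the Galois orbit of $(x_n, y_n)$, and pushing forward by $\pi_1$ and $\pi_2$, yields two descriptions of the same limit measure on $Y$, compatible via the finite covers $\pi_i$. Comparing them, together with the class-number estimate of the first step, forces $D_n = D_n'$ for $n$ large enough.

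Once $D_n = D_n'$, $E_n$ and $E_n'$ are CM by the same order and are therefore isogenous. The last step is to bound the minimal isogeny degree $N_n$ uniformly: via Faltings-type estimates for minimal isogeny degrees between CM elliptic curves, or through a direct analysis of which modular polynomials $\Phi_N(x, y)$ must vanish identically on $Y$ once they vanish at too many points of $Y$, one produces $N_0 = N_0(d, K)$ such that $(x_n, y_n) \in X_0(N_n)$ with $N_n \leq N_0$. Pigeonhole then puts infinitely many of the $(x_n, y_n)$ on a single $X_0(N)$, and by irreducibility $Y = X_0(N)$. The main obstacle is precisely this uniform isogeny bound: it is the step that requires genuine arithmetic input beyond the soft formalism of equidistribution, through heights of CM points or Galois representations on Tate modules, and it is what gives the theorem its characteristic André-Oort flavor.
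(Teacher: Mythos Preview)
The paper does not prove this theorem: it is quoted from \cite{andre} purely as motivation for the dynamical André--Oort problem, with no proof sketch given. So there is nothing in the paper to compare your argument against; I can only comment on the proposal itself.

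Your outline has a genuine gap at the equidistribution step. You invoke ``a Yuan-type equidistribution statement for the adelic height on $\pu$ whose zero locus on $\overline\Q$-points is exactly the set of singular moduli''. No such height exists in the framework of these notes: the Yuan machinery applies to canonical heights attached to polarized dynamical systems (or more generally to adelic metrics on line bundles), and singular moduli are not the small points of any such height on $\pu$. They do have bounded (in fact explicitly computable) naive height, but that is a separate phenomenon and does not feed into Yuan's theorem. Duke's theorem is a genuine equidistribution result for CM points, but it is proved by analytic number theory (subconvexity for $L$-functions), not by the arithmetic-dynamical formalism of this paper, so citing it as an alternative does not keep you inside the methods developed here.

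Even granting some equidistribution input, the deduction ``$D_n = D_n'$ for $n$ large'' from comparing pushforwards by $\pi_1$ and $\pi_2$ is not justified: both coordinates equidistribute to the same hyperbolic measure regardless of whether the discriminants agree, so the measures alone cannot separate them. André's actual proof proceeds quite differently: it combines an upper bound for the height of singular moduli with the Brauer--Siegel lower bound on class numbers and a careful analysis of the $q$-expansion of $j$ to control the Galois orbit of a CM point lying on $Y$. The ``uniform isogeny bound'' you identify as the crux is indeed the heart of the matter, but it is obtained by these height/class-number considerations rather than by Faltings-type isogeny estimates.
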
  
 
 Recall that $X_0(N)$ is the irreducible algebraic curve in $\cd$ uniquely defined by the property that 
 $(E, E')\in X_0(N)$  if there exists a cyclic  isogeny $E\to E'$ of degree $N$ (this property is actually symmetric in $E$ and $E'$). Likewise, it is characterized by the property that   for every $\tau \in \mathbb{H}$, 
 $(j(N\tau), j(\tau))\in X_0(N)$.  
 
   \subsection*{Classification of special curves} In view of the above considerations it is natural to attempt to classify special 
   subvarieties, that is subvarieties $\La$ 
   of $\mpoly_d$ (or $\mathcal{M}_d$) with a Zariski dense subset of \pcf parameters. Examples are easy to find: assume that the  $k$   critical points 
   $c_0, \ldots , c_{k-1}$ 
   critical points are ``dynamically related'' on a subvariety $\La$ of codimension $k-1$. This happens
     for instance if they satisfy a relation of the form
    $P^{k_i}(c_i) = P^{l_i}(c_0)$ for some  integers $k_i, l_i$ (say $\La$ is the subvariety cut out by $k-1$ such equations, thus $\dim(\La) = d-k $). Then $c_0, c_{k+1}, \ldots , c_{d-2}$ are    $(d-1-k)+1 = d-k$  ``independent'' critical points on $\La$     and arguing as in     Proposition \ref{prop:PCF} shows that \pcf maps are Zariski dense on $\La$.
    
     A ``dynamical 
André-Oort conjecture'' was proposed by Baker and DeMarco \cite{baker_demarco} which says precisely that   a subvariety 
$\La$ of dimension $q$ in the moduli space of rational maps of degree $d$ with marked critical points
 $\mathcal{M}_d^{\mathrm{cm}}$
 is special if and only if at most $q$ critical points are ``dynamically independent'' on $\La$ (the precise notion of dynamical dependence is slightly delicate to formalize). 

Some partial results towards this conjecture are known, including a complete proof for the space of cubic 
polynomials\footnote{Note added in April 2020: Favre and Gauthier have recently obtained a classification of
 special curves in spaces  of polynomials of arbitrary degree.}. Recall that 
$\mpoly_3^{\mathrm{cm}}$ is parameterized by $(c,a)\in \cd$.

\begin{thm}[Favre-Gauthier \cite{favre_gauthier2}, Ghioca-Ye \cite{ghioca_ye}]\label{thm:special curves}
An irreducible curve $C$ in the space $\mpoly_3^{\mathrm{cm}}$ is special if and only if one of the of the following holds:
\begin{itemize}
\item one of the two critical points is persistently preperiodic along $C$;
\item There is a persistent collision between the critical orbits, that is, there exists $(k, l)\in \nn^2$ such that 
$P_{c,a}^k (c_0) = P_{c,a}^l (c_1)$ on $C$;
\item $C$ is the curve of cubic polynomials $P_{c,a}$ commuting with $Q_c:z\mapsto -c+z$ (which is given by an explicit equation). 
 \end{itemize}
 \end{thm}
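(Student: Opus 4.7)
The plan is to prove the ``if'' direction via the activity arguments of Part II, and the ``only if'' direction via arithmetic equidistribution followed by a dynamical rigidity analysis.

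For the easy direction I would treat the three cases uniformly. In case (1), one critical point, say $c_0$, is persistently preperiodic along $C$; applied to $c_1$, Theorem \ref{thm:passive curve} forces either $c_1$ to be persistently preperiodic too (in which case every point of $C$ is PCF) or $c_1$ to be active somewhere on $C$, in which case Proposition \ref{prop:levin} produces infinitely many parameters on $C$ where $c_1$ becomes preperiodic, and these are automatically PCF since $c_0$ is. Case (2) is analogous: the persistent collision $P_{c,a}^k(c_0)=P_{c,a}^l(c_1)$ reduces the PCF condition to the preperiodicity of a single critical point, whose density along $C$ again follows from Theorem \ref{thm:passive curve} and Proposition \ref{prop:levin}. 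For case (3) one checks that the commutation relation identifies the two critical points of $P_{c,a}$ as images of one another under the affine involution $Q_c$, so that preperiodicity of $c_0$ at a parameter of $C$ forces preperiodicity of $c_1$ there; activity of $c_0$ along $C$ then gives the required Zariski-dense set of PCF parameters.

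For the converse, assume $C$ is defined over a number field and contains infinitely many PCF parameters. To each marked critical point $c_i$ I associate, exactly as in the proof of Theorem \ref{thm:favre gauthier}, an adelic height $h_i$ on $\mpoly_3^{\mathrm{cm}}(\overline \Q)$ built from the local Green functions $G_{P_{c,a},v}(c_i)$ at each place $v$; the identity $h_i(c,a) = 0$ iff $c_i$ is preperiodic implies that a parameter is PCF precisely when $h_0$ and $h_1$ both vanish. Restricting to $C$, each $h_i|_C$ is an adelic canonical height on the quasiprojective curve $C$ with continuous, well-behaved local Green functions — the behaviour at infinity being controlled by Theorem \ref{thm:branner hubbard}. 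An infinite sequence of PCF parameters is thus a sequence of small-height points for both heights, so applying Thuillier's equidistribution theorem on curves twice, once to each $h_i|_C$, yields the coincidence
$$\mu_{0,v}^C \;=\; \mu_{1,v}^C \qquad \text{at every place } v,$$
where $\mu_{i,v}^C$ is the equilibrium measure on the Berkovich analytification $C_v^{\mathrm{an}}$ attached to $h_i|_C$.

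The main obstacle is to upgrade this coincidence of measures into the announced geometric trichotomy. If one of the critical points, say $c_0$, is passive along $C$, Theorem \ref{thm:passive curve} immediately gives either case (1) or isotriviality of the family on $C$, and in $\mpoly_3^{\mathrm{cm}}$ the only one-parameter isotrivial family with nontrivial affine automorphism is the symmetry curve of case (3). The delicate situation is when both critical points are active on $C$; then both measures $\mu_{0,v}^C$ and $\mu_{1,v}^C$ are non-trivial, and I would exploit their equality along two complementary lines, each working at a different place. Following Favre--Gauthier at the Archimedean place, a pluripotential-theoretic comparison of the local potentials of the two bifurcation currents on $C$ shows that they differ by a pluriharmonic function, which together with the Branner--Hubbard asymptotics pins down an actual identity of the potentials on $C$; algebrizing this analytic identity then forces either a persistent orbit collision (case (2)) or the commutation of case (3). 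Following Ghioca--Ye at a non-Archimedean place, one applies the Medvedev--Scanlon classification of invariant curves under the product action $(P_{c,a}, P_{c,a})$ on $\mathbb{A}^2$ to the algebraic correspondence between the two critical orbits and extracts the same trichotomy. Converting the measure-theoretic equality into a rigid algebraic identity, and in particular excluding ``skew'' relations involving iterates at different depths, is the technical heart of the argument and the step where the specific geometry of $\mpoly_3^{\mathrm{cm}}$ (dimension two, exactly two critical points) is essentially used.
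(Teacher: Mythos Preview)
Your proposal matches the paper's own outline closely. The paper explicitly declines to give a full proof (``too long to be described in these notes'') and only sketches the equidistribution step --- two parameter height functions on $C$ built from $G_i(c,a)=G_{P_{c,a}}(c_i)$, Yuan's theorem applied to the infinite PCF sequence yielding $\Delta(G_0|_C)=\alpha\,\Delta(G_1|_C)$, and then ``after some work (which involves in particular equidistribution at finite places)'' promoting this to an analytic and finally a dynamical relation between the critical orbits --- which is precisely your plan, including your attribution of the two routes to Favre--Gauthier and Ghioca--Ye.

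One small correction: there are no isotrivial curves in $\mpoly_3^{\mathrm{cm}}$ (distinct $(c,a)$ give non-conjugate polynomials up to the finite deck group), so the passive alternative in Theorem~\ref{thm:passive curve} leads directly to case~(1), never to case~(3). The symmetry curve is \emph{not} isotrivial; it arises, as you correctly say a few lines later, from the active branch when the rigidity analysis of the measure equality produces the commutation relation rather than an orbit collision.
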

 
 The proof is too long to be described in these notes, let us just say a few words on how arithmetic equidistribution (again!) 
 enters into play. For $i=0, 1$, we define the function $G_i$    by $G_i (c,a)  = G_{c,a}(c_i)$, which again 
 admit adelic versions. Since $C$ admits infinitely many \pcf parameters, 
 Yuan's theorem applies to show  that they are equidistributed inside $C$. Now there are two parameter height functions on $C$, one associated to $c_0$ and the other one associated to $c_1$. 
Since \pcf parameters are of height 0 relative to both functions, we 
infer that the limiting measure must be proportional to both 
$\Delta(G_0\rest{C})$ and $\Delta(G_1\rest{C})$, thus $\Delta(G_0\rest{C}) = \alpha \Delta(G_1\rest{C})$
 for some $\alpha>0$.  This defines a first dynamical relation between $c_0$ and $c_1$, which after some work (which 
 involves in particular equidistribution at finite places),   is promoted to an analytic relation between the $c_i$, and finally to the 
 desired dynamical relation. 
 
We give  a more detailed argument in a particular case of Theorem \ref{thm:special curves}, which had previously
been  obtained  by Baker and DeMarco \cite{baker_demarco}. We let $\per_n(\kappa)$ be the algebraic curve in $\mpoly_3^\cm$ defined 
by the property that polynomials in $\per_n(\kappa)$ admit 
  a periodic point of exact period $n$ and multiplier $\kappa$. 
 
 \begin{thm} [Baker-DeMarco \cite{baker_demarco}]
 The curve $\per_1(\kappa)$ in $\mpoly_3^\cm$ 
 is special if and only if $\kappa=0$.
 \end{thm}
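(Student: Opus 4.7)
When $\kappa=0$, the fixed point $\beta$ has multiplier $0$, so $P'(\beta)=0$ and $\beta$ is one of the critical points. Hence along $\per_1(0)$ one marked critical point (say $c_0$) is persistently fixed. The curve $\per_1(0)$ is not isotrivial, so applying Theorem \ref{thm:passive curve} to the other marked critical point $c_1$ we see that $c_1$ is either persistently preperiodic on $\per_1(0)$ or active somewhere. In the first case every parameter on $\per_1(0)$ would be \pcf, which is incompatible with $\dim \per_1(0)=1$ by Thurston-type rigidity; so $c_1$ is active. Proposition \ref{prop:levin} then produces infinitely many $\la_n\in\per_1(0)$ with $c_1(\la_n)$ preperiodic; at each such $\la_n$ both critical points are preperiodic, so $P_{\la_n}$ is \pcf.

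\textbf{Converse: setup.} Assume $\kappa\ne 0$ and that $C:=\per_1(\kappa)$ carries infinitely many \pcf parameters $(x_n)$ with Galois orbits $X_n$ of size going to infinity (which we may assume after extraction). For each marked critical point $c_i$ ($i=0,1$) and each place $v$ of $\overline\Q$, the dynamical Green function $G_{i,v}(\la):=G_{F_\la,v}(c_i(\la))$ defines, after restriction to $C$, an adelically metrized line bundle; summing over places gives a parameter height $h_i$ on $C(\overline\Q)$ whose zero set is the collection of parameters where $c_i$ is preperiodic. Since every $x_n$ is \pcf we have $h_0(x_n)=h_1(x_n)=0$, so the sequence $(X_n)$ satisfies the hypotheses of Yuan's equidistribution theorem (Theorem \ref{thm:yuan}) relative to \emph{both} adelic metrics. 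By uniqueness of the equidistribution limit, at every place $v$ the probability measures $\frac{1}{\deg_C G_{i,v}}\,\Delta G_{i,v}\rest{C}$ for $i=0,1$ coincide. In particular at the Archimedean place $\Delta G_0\rest{C}$ and $\Delta G_1\rest{C}$ are proportional, and a comparison of their total masses (equal to the generic degrees at infinity of $G_i\rest{C}$, computed from the Branner-Hubbard asymptotics of Theorem \ref{thm:branner hubbard}) pins down the proportionality factor.

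\textbf{From measure equality to a rigid identity.} Let $\alpha>0$ be the resulting ratio, so that $\Delta(G_0\rest{C})=\alpha\,\Delta(G_1\rest{C})$ on $C(\cc)$. Then $G_0\rest{C}-\alpha\,G_1\rest{C}$ is harmonic on $C(\cc)$ minus a finite set; the growth estimates of Theorem \ref{thm:branner hubbard} show that this difference is bounded near the ends of $C$, so it extends to a harmonic function on a compactification of $C$ and is therefore constant, equal to some $\gamma_\infty\in\re$. The same argument, carried out on the Berkovich analytification of $C$ at each non-Archimedean place, yields constants $\gamma_v$ with $G_{0,v}\rest{C}-\alpha\,G_{1,v}\rest{C}=\gamma_v$; by the product formula applied to any parameter in $C(\overline\Q)$ one has $\sum_v \e_v\gamma_v=0$.

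\textbf{Deriving the contradiction.} The plan is now to compute the $\gamma_v$ explicitly. Parameterize $C$ by normalizing the fixed point to $0$, writing $P_t(z)=\kappa z+tz^2+z^3$ (a double cover of $\per_1(\kappa)$ suffices), so that the two critical points are $c_\pm(t)=\frac{-t\pm\sqrt{t^2-3\kappa}}{3}$. As $t\to\infty$ one branch satisfies $c_+(t)\sim -2t/3$ and the other $c_-(t)\sim \kappa/(2t)$, which forces very different Archimedean growth rates of $G_0$ and $G_1$ on $C$; this already constrains $\alpha$ severely and, combined with the explicit behavior near $t\to 0$, determines $\gamma_\infty$. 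At a non-Archimedean place $v$ where $|\kappa|_v\neq 1$ (such places exist since $\kappa\in\overline\Q^*$ is not a unit everywhere unless it is a root of unity, a case one handles separately using Corollary \ref{cor:northcott}), an explicit analysis using \eqref{eq:norm lift} and the Berkovich fixed-point structure of $P_t$ near $\beta=0$ computes $\gamma_v$ in terms of $\log|\kappa|_v$. Summing the resulting local contributions via the product formula produces a nonzero identity in $\log|\kappa|_v$'s, contradicting $\sum_v\e_v\gamma_v=0$ unless $\kappa=0$. The main obstacle is this last step: converting the rigid equality $G_{0,v}-\alpha G_{1,v}=\gamma_v$ into an explicit arithmetic identity requires a careful place-by-place study of the escape rates of the two critical points on $\per_1(\kappa)$, and this is where the specific geometry of the family $\per_1(\kappa)$ and the arithmetic nature of $\kappa$ are used decisively.
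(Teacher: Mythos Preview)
Your forward direction matches the paper's. For the converse you share the opening equidistribution step (Yuan's theorem forces $\Delta G_0\rest{C}$ proportional to $\Delta G_1\rest{C}$), but from there the paper proceeds very differently, and your proposed endgame has a genuine gap.

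The paper works entirely at the Archimedean place. It parameterizes $\per_1(\kappa)$ by $s\in\cc^*$ via $f_s(z)=\kappa\big(z-\tfrac12(s+s^{-1})z^2+\tfrac13 z^3\big)$, with critical points $c^\pm(s)=s^{\pm1}$; this built-in $s\leftrightarrow s^{-1}$ symmetry yields clean asymptotics $G^+(s)=\log|s|+O(1)$ as $s\to\infty$ while $G^+$ stays bounded near $0$ (and symmetrically for $G^-$). From $\mu^+=\mu^-$ one gets that $G^+-G^--\log|s|$ is harmonic and bounded on $\cc^*$, hence constant, so $G^+-G^-=\log|s|+C$. Then $\bif\subset\{G^+=0\}\cap\{G^-=0\}\subset\{G^+-G^-=0\}$, which is a circle; this contradicts Theorem~\ref{thm:shishikura} (the bifurcation locus has Hausdorff dimension $2$). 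No finite places and no arithmetic identity in the $\log|\kappa|_v$ are needed.

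Your product-formula route is circular as written. Once you have $G_{0,v}-\alpha G_{1,v}=\gamma_v$ at every place, evaluating $h_0-\alpha h_1=\sum_v\e_v\gamma_v$ at any \pcf parameter already forces $\sum_v\e_v\gamma_v=0$; explicitly computing each $\gamma_v$ in terms of $\log|\kappa|_v$ and summing cannot then produce a nonzero total (and the product formula for $\kappa$ itself kills any linear combination of $\log|\kappa|_v$'s). So this identity carries no obstruction: the contradiction must come from the dynamics, and you have not supplied one --- you flag this as ``the main obstacle'' but do not resolve it. Separately, your assertion that $G_0-\alpha G_1$ is \emph{bounded} near the ends of $C$ is unjustified: in the paper's coordinates the analogous difference is \emph{not} bounded (one must subtract $\log|s|$), and your own remark that $c_+$ and $c_-$ have ``very different growth rates'' as $t\to\infty$ points the same way. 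The paper's symmetric parameterization is not cosmetic; it is exactly what turns the measure equality into a geometric constraint on $\bif$ that Shishikura's theorem can kill.
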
  
 
 The same result actually holds for   $\per_n(\kappa)$ for every $n\geq 1$ by \cite[Thm B]{favre_gauthier2}.
 
\begin{proof}
The ``if'' implication is easy: $\per_1(0)$ is the set of cubic polynomials where some critical point is fixed. Consider an irreducible component of $\per_1(0)$, then one critical point, say $c_0$ is fixed. We claim that $c_1$ is not passive along that component. Indeed otherwise by Theorem \ref{thm:passive curve} it would be persistently preperiodic and we would get a curve of \pcf 
parameters. So there exists a parameter $\lo\in \per_1(0)$  at which $c_1$ is active, hence by Proposition \ref{prop:levin} we get 
 an infinite  sequence  $\la_n\to\lo$ for which $c_1(\la_n)$ is preperiodic. In particular  $\per_1(0)$  contains infinitely many \pcf 
 parameters and we are done. 
 
Before starting the proof of the direct implication,  observe that if $0< \abs{\kappa} <1$ by Theorem \ref{thm:attracting critical} a critical point must be attracted by the attracting fixed point so $\per_1(\kappa)$ is not special. A related  argument also applies for $\abs{\kappa} =1$, so the result is only interesting when $\abs{\kappa}>1$. The proof will be divided in several steps. We fix $\kappa\neq 0$. 
 
 {\bf Step 1: adapted parameterization.} We   first change coordinates in order 
  to find a parameterization of $C$ that is 
  convenient for the calculations to come. 
  First we can conjugate by 
   a translation so that the fixed point is $0$. The 
    general form of cubic polynomials with a fixed point at 0 of multiplier 
    $\kappa$ is $\kappa z+ az^2+ bz^3$ with $b\neq 0$. Now by a homothety we can adjust $b=1$ to get $\kappa z+ az^2+  z^3$. 
In this form the critical points are not marked. It turns out that a convenient parameterization of $C$  is given by
 $(f_s)_{s\in \cc^*}$ defined by 
$$ f_s(z) = \kappa \lrpar{z- \unsur{2}\lrpar{s+ \unsur{s}}z^2 + \unsur{3} z^3},$$
whose critical points are $s$ and $1/s$. Denote   $c^+(s) = s$ and $c^-(s) = s\inv$. The parameterization has 2-fold symmetry 
$s \leftrightarrow s\inv$. 

 {\bf Step 2: Green function estimates.} 
 We consider as usual the values of the dynamical Green function at critical points and define $G^+(s) = G_{f_s}(s)$ and 
$G^-(s) = G_{f_s}(s\inv)$. An elementary calculation shows that $s\mapsto f^n_s(s)$ is a polynomial in $s$, with 
the following leading coefficient
\begin{align*}
f_s^n(s)  &= \frac{\kappa}{3} \lrpar{\frac{\kappa}{3}}^3 \cdots \lrpar{\frac{\kappa}{3}}^{3^{n-2}} \lrpar{-\frac{\kappa}{6}} ^{3^{n-1}}
s^{3^n} + O\lrpar{s^{3^{n-1}}}\\
&= \lrpar{\frac{\kappa}{3}}^{\frac1{6} (3^{n-2} -1)} \lrpar{-\frac{\kappa}{6}} ^{3^{n-1}}
s^{3^n} + O\lrpar{s^{3^{n-1}}}.
\end{align*}
It follows that 
\begin{align}\label{eq:G+}
G^+(s)  = G_{f_s}(s)  &= \lim_{n\to\infty} \log^+\abs{f^n_s(s)}  \\
\notag &= \log\abs{s}+  \log \abs{\frac{\kappa}{6}}^{1/3} + \log \abs{\frac{\kappa}{3}}^{1/6}  + O(1)
\end{align}
when $s\to \infty$ (I am cheating here because the coefficient 
in $O(s^{3^{n-1}})$ is not uniform in $n$). On the other hand, since 
$f^n_s(s)$ is a polynomial, for $\abs{s}\leq 1$ we have $\abs{f^n_s(s)}\leq \abs{f^n_1(1)}$ so $G^+(s)\leq G^+(1)$ and 
it follows that $G^+$ is bounded near $s=0$. By symmetry $G^-$ is bounded near $\infty$ and tends to $+\infty$ when 
$s\to 0$. 

{\bf Step 3: bifurcations.} We now claim that $c^+$ and $c^-$ are not passive along $C$. Indeed as before otherwise they would be persistently preperiodic, 
contradicting the fact that $G^+$ and $G^-$ are unbounded. So we can define two bifurcation measures $\mu^+ = \Delta G^+$ 
and $\mu^- = \Delta G^-$ in $\cc^*$. Note that 
$\mu^+$ is a probability measure 
(because the coefficient of the log in \eqref{eq:G+} equals 1) and its support is bounded in $\cc$ (i.e. does not contain $\infty$) because the dynamical Green function is harmonic when it is positive. A similar description holds for $\mu^-$, whose support is 
away from 0. 

We saw in \S \ref{sec:equidist higher} that the bifurcation locus is the union of the activity loci of critical points. 
It follows that it   is contained in $\supp(\mu^+)\cup \supp(\mu^-)$  --this is actually an equality by a theorem of DeMarco \cite{demarco_tbif}. 

{\bf Step 4: equidistribution argument and conclusion.}  Assume now that   $\per_1(\kappa)$ contains infinitely many 
\pcf parameters. Note that the existence of a \pcf map in $\per_1(\kappa)$ implies that $\kappa\in \overline \Q$ so 
$\per_1(\kappa)$ is defined over some number field $K$.  
It can be shown that the functions $G^\pm$ 
(in their adelic version) satisfy the assumptions of the Yuan's arithmetic equidistribution theorem. Thus if 
$(s_k)_{k\geq 0}$ is any infinite sequence such that 
$c^+(s_k)$ is preperiodic, the uniform measures on the $\mathrm{Gal}(\overline K/K)$ conjugates of $s_k$ 
equidistribute towards $\mu^+$. Applying this 
fact along a sequence of \pcf parameters we conclude that $\mu^+ = \mu^-$. We want to derive a contradiction from this
equality. 

We have that  $\supp(\mu^+)  = \supp(\mu^-)$ so this set must be compact in $\cc^*$. The function $G^+- G^-$ is harmonic  on 
$\cc^*$  with $G^+(s) -G^-(s)  = \log \abs{s} +O(1)$ at $+\infty$ and 0. Therefore $G^+  -G^-   - \log \abs{\cdot}$ is harmonic 
and bounded on $\cc^*$ so it is constant, and we conclude that 
$G^+(s) - G^-(s) = \log\abs{s} +  C$ for some $C$. Now recall that the bifurcation locus $\bif$ is contained in 
$\supp(\mu^+)  \cup \supp(\mu^-) = \supp(\mu^+) $, so $\bif\subset \set{G^+=0}$. Likewise $\bif\subset \set{G^-=0}$ so 
 $\bif$ is contained in  $\set{G^+- G^-=0}$ which is a circle.
 On the other hand it follows from Theorem \ref{thm:shishikura} that  for the family $(f_s)_{s\in \cc^*}$, the Hausdorff dimension of $\bif$ is equal to 2. This contradiction finishes the proof. 
\end{proof}

 \bibliographystyle{plain}
\bibliography{refsgrenoble}

 \end{document}